\newtheorem{theorem}{Theorem}[section]
\newtheorem{lemma}[theorem]{Lemma}
\newtheorem{conjecture}[theorem]{Conjecture}
\theoremstyle{definition}
\theoremstyle{remark}
\newtheorem{remark}[theorem]{Remark}
\numberwithin{equation}{section}
\newcommand{\R}{{\mathbb{R}}}
\newcommand{\RP}{{\mathbb{R}_+}}
\newcommand{\ZP}{{\mathbb{Z}_+}}
\newcommand{\N}{{\mathbb{N}}}
\newcommand\Exp{{\mathbb{E}}}
\renewcommand\Pr{{\mathbb{P}}}
\newcommand\1{{\mathbf 1}}
\newcommand\eps{\varepsilon}
\newcommand\re{{\mathrm{e}}}
\newcommand{\ud}{{\mathrm{d}}}
\newcommand{\cX}{{\mathcal{X}}}
\newcommand{\cH}{{\mathcal{H}}}
\newcommand{\cF}{{\mathcal{F}}}
\newcommand{\cG}{{\mathcal{G}}}
\newcommand{\cV}{{\mathcal{V}}}
\newcommand{\cW}{{\mathcal{W}}}
\newcommand{\as}{\ \textrm{a.s.}}
\newcommand{\argmin}{\mathop{{\rm arg} \min}}
\newcommand{\hier}[3][X]{#1^{(#2)}_{#3}}
\title{Phase transitions for random geometric
 preferential attachment graphs}
\author{Jonathan Jordan\\
\normalsize{University of Sheffield}\\
\and Andrew R. Wade\\
\normalsize{Durham University}}
\begin{document}

\maketitle

\begin{abstract}
We study an evolving
spatial network 
in which sequentially arriving vertices are joined
to existing vertices at random according to a rule that combines preference according to degree with preference
according to spatial proximity. We investigate phase transitions in graph structure as the relative
weighting of these two components of
 the attachment rule is varied. 

Previous work of one of the authors showed that when the
geometric component is weak, the 
limiting degree sequence of the resulting graph coincides with that of the standard Barab\'asi--Albert preferential attachment model.
We show that at the other extreme, in the case of a sufficiently strong geometric component, the
limiting degree sequence coincides with that of a purely
geometric model, the on-line nearest-neighbour graph,  which is of interest in its own right and for which we prove some extensions of known results. We also
show the presence of an intermediate regime, in which the behaviour differs significantly
from both the on-line nearest-neighbour graph and the Barab\'asi--Albert model; in this regime, we obtain
a stretched exponential upper bound on the degree sequence.

Our results lend some mathematical support to simulation studies of Manna and Sen,
while proving that the power law to stretched exponential phase transition occurs at a different point from the one
conjectured by those authors.
\end{abstract}

\smallskip
\noindent
{\em Keywords:} Random spatial network, preferential attachment, on-line nearest-neighbour graph, degree sequence. \/

\noindent
{\em AMS 2010 Subject Classifications:} 60D05 (Primary) 05C80,  90B15 (Secondary)

\section{Introduction}
\label{sec:intro}

Stochastic models for network evolution have been the subject of an explosion of interest over the past decade or so,
motivated by real-world graphs such as those associated with social networks or the internet: see e.g.\
\cite{bonato,durrett} for an introduction to some of the vast literature and some of the key models.
In a typical setting, a graph is grown via the sequential addition of new nodes, and each new node
is connected by an edge to an existing node in the graph according to some (often probabilistic) rule.
Several popular connectivity rules are based on 
 {\em preferential attachment}, whereby the random endpoint of the new edge is chosen
with probability proportional to the current vertex degrees:
the preferential attachment paradigm is supposed to capture the idea that in many real-world networks
highly-connected nodes are more likely to attract new connections.
On the other hand, real-world networks
often have spatial content, and so other network growth models
assign to each vertex a (random) spatial location and have a {\em geometric} connectivity rule;
the {\em on-line nearest-neighbour graph}, for example, is constructed by connecting each
new vertex to its nearest neighbour among its predecessors.

The subject of this paper is a model whose connectivity rule combines a degree of
preferential attachment with a spatial, distance-dependent component; we describe our model
 in detail below. This model, previously studied in \cite{jordan2010},
 is a variant of the geometric preferential attachment model
of Flaxman {\em et al.} \cite{ffv,ffv2}, which itself can be viewed as a generalization of an earlier model
of Manna and Sen \cite{ms}. A continuous time model with a similar flavour has recently been studied
by Jacob and M\"orters \cite{jm}.

In a sense that we will explain in this paper,
the   behaviour of the geometric preferential attachment model considered here
 interpolates between pure preferential attachment
(essentially the well-known Barab\'asi--Albert model)
and a purely geometric model (the on-line nearest-neighbour graph).
It was shown in \cite{jordan2010} that for a sufficiently weak geometric
component of the attachment rule, the limiting degree distribution
coincides with that of the Barab\'asi--Albert model,
which famously has a `scale-free' or `power-law' degree distribution \cite{brst,jordan2006}.

The focus of the present paper is the complementary setting, in which the geometric component 
has a significant impact.
We show that in the extreme case of a dominant geometric effect,
 the model behaves similarly to the on-line nearest-neighbour graph,
which by contrast has a degree distribution with exponential tails (cf \cite{bbbcr}).
We also study an intermediate regime in which the model
behaves differently from both of the extreme cases, and in which the degree
distribution satisfies a stretched exponential
tail bound.
Thus we demonstrate the existence of non-trivial phase transitions for the model.

\section{Random spatial graph models and main results}
\label{sec:results}

\subsection{Notation}
\label{sec:notation}

We introduce some notation that we will use throughout the paper. Write $\N := \{ 1,2,\ldots\}$, $\ZP := \{ 0,1,2,\ldots\}$, and $\RP := [0,\infty)$.
 The vertices of our graphs will be
associated with {\em sites} in a subset $S$ of an ambient $d$-dimensional space $(d \in \N)$.
Throughout we assume that $S \subset \R^d$ is compact, convex, and of positive $d$-dimensional Lebesgue measure (since $S$ is compact, it is a Borel set).
The location of the sites for the vertices will be
distributed according to
 a density function $f$  supported on $S$.
 Let $X_0, X_1, \ldots$ be independent random variables with density $f$, and for $n \in \N$
 set $\cX_n := \{ X_0, \ldots, X_n \}$.
For most of our main results, we will assume that $f$ is bounded
away from $0$ and $\infty$ on its support $S$:
\begin{equation}
\label{fcon}
0 < \inf_{x \in S} f(x) \leq \sup_{x \in S} f(x) < \infty.
\end{equation}

 We
 write $\| \, \cdot \, \|$ for the Euclidean norm on $\R^d$, and 
 $\rho(x , y) = \| x -y\|$ for the Euclidean distance
between $x$ and $y$ in $\R^d$. 
Denote by $B( x; r)$ the open
 Euclidean $d$-ball centred at $x \in \R^d$ with radius
$r >0$.
Throughout we understand $\log x$ to stand for $\max\{0, \log x\}$.
 Let $\# A$ denote the number of elements of a finite set $A$.

\subsection{On-line nearest-neighbour graph}
\label{sec:theong}

The  on-line nearest-neighbour
graph (ONG)
is constructed on
points arriving sequentially in $\R^d$ by
connecting each point
after the first
to its nearest (in the Euclidean sense)
predecessor.
The ONG is a natural and basic model of evolving  spatial
networks. Many real-world networks have spatial content and evolve over time by the addition
of new nodes and edges.  Often distances between nodes,
as measured in the ambient
space in which the
network is embedded,
are significant, and it is often desirable that edge-lengths be minimized:
this may be the case in electrical, communications, and
transport networks for example.
 The ONG is perhaps the simplest model
of a growing spatial network that captures some of the fundamental
properties that seem natural for such networks, while displaying
interesting mathematical behaviour and presenting challenges
for analysis.

The ONG is a special case (or limiting case) of several
models that have appeared in the literature,
including a version
of the `FKP' network model \cite{fkp,bbbcr}
and geometric preferential attachment models such as \cite{ms,ffv,jordan2010}
(specifically, it is the `$\alpha =  -\infty$' case of the model of Manna and Sen \cite{ms});
one contribution of the present paper is to explore this latter connection.
The ONG can also be viewed in the framework of the `minimal
directed spanning tree' \cite{survey}. The name `on-line nearest-neighbour graph'
was apparently introduced by Penrose in \cite{mdp}.

 In the ONG on $(X_0, \ldots, X_n)$, edges are added one by one, the $n$th edge ($n\in\N$)
between $X_n$ and its nearest neighbour among  $\cX_{n-1}$; with probability 1,
this nearest neighbour is unique, since  ties occur with probability $0$. In other words, writing
\begin{equation}
\label{nnnot}
 \eta_1 (n) := \argmin_{i \in \{ 0, \ldots, n-1 \}} \rho ( X_n , X_i ) \end{equation}
for the index of the (a.s.\ unique) nearest predecessor of $X_n$, the ONG on $( X_0, \ldots, X_n )$
consists of the edges
$( i, \eta_1 (i) )$ for $1 \leq i \leq n$; it is natural to view these as directed edges
when constructing the graph, but we also view them as undirected edges when convenient (e.g.\ when computing degrees).
We call $X_{\eta_1(n)}$ the {\em on-line nearest neighbour} of $X_n$.

Let $\deg_n (i)$ denote the degree of vertex $i$ in the ONG on $(X_0, \ldots, X_n)$, viewed as an undirected graph;
so this includes, for $i \neq 0$, the outgoing edge $(i, \eta_1 (i))$ in addition to any incoming edges $(j,i)$, $i < j \leq n$. 
Let $N^\mathrm{ONG}_n(k)$ denote the number of vertices with degree at least $k$ in the ONG on $(X_0, \ldots, X_n)$:
\[ N^\mathrm{ONG}_n(k) = \sum_{i=0}^n \1 \{ \deg_n (i) \geq k \} .\]
We study the {\em asymptotic degree sequence}, i.e., the asymptotic proportion of vertices with degree at least $k$ (for each $k$).
So we are interested in the asymptotic behaviour of $(n+1)^{-1}  N^\mathrm{ONG}_n(k)$. For simplicity, however, we state
our results for $n^{-1}  N^\mathrm{ONG}_n(k)$; the asymptotics of the two are clearly equivalent.

 Part of the statement of our main result on the ONG, Theorem \ref{ongdeg} below,
  is that $\lim_{n \to \infty} n^{-1} \Exp [ N^\mathrm{ONG}_n (k) ]$ {\em exists}
 for each $k$; this was stated, apparently without proof, in \cite[\S 2]{bbbcr}, but can be justified for the ONG
using stabilization arguments of Penrose \cite{mdp}, as we explain in Section \ref{sec:ongproof} below. Stabilization also gives an explicit description of the limit
 in terms of a version of the ONG defined on an infinite Poisson point process, as we describe next; in particular, the limit depends only on $d$ and not on $S$ or $f$.

 Let $\cH$ denote a unit-rate homogeneous Poisson point process on $\R^d \times [0,1]$;
  the $[0,1]$-valued coordinate
 can be thought of as a uniform random {\em mark}, associated to each Possion point in $\R^d$,
that will play the role of time in the finite construction of the ONG.
 For $u \in [0,1]$, let $\cH_u := \cH \cap ( \R^d \times [0,u])$, those Poisson points with marks in $[0,u]$.
For $x, y \in \R^d$ let $B_x (y)$ denote the
open Euclidean ball with centre $y$ whose boundary includes $x$.
 Given  $x \in \R^d$  and $u \in [0,1]$,
 let
\[ \xi  ( x, u ; \cH ) := 1 + \sum_{(y, v) \in \cH , \, v > u} \1 \{  \cH_v \cap ( B_x ( y ) \times [0,1] )   = \{ (y, v) \} \} .\]
It is a consequence of stabilization for the ONG (see \cite{mdp}) that $\xi (x, u ; \cH) < \infty$ a.s.\ for any $x \in \R^d$ and any $u \in (0,1)$.
We call $\xi  ( x, u ; \cH )$ the {\em degree} of $(x,u)$ in the {\em infinite Poisson on-line nearest-neighbour graph},
which is defined locally by joining each   point to the nearest Poisson point with mark equal to or less than the mark of the given point;
note that $(x,u)$ itself
need not be in $\cH$. Let $U$ denote a uniform $[0,1]$ random variable, independent of $\cH$.

\begin{theorem}
\label{ongdeg}
Let $d \in \N$. Suppose that \eqref{fcon} holds. Then for any $k \in \N$,
\begin{equation}
\label{deglim}
\lim_{n \to \infty} n^{-1} N^\mathrm{ONG}_n (k) = \lim_{n \to \infty} n^{-1} \Exp [ N^\mathrm{ONG}_n (k) ] = \Pr [ \xi ( 0, U ; \cH )  \geq k] =: \rho_k ,
\end{equation}
the first limit equality holding a.s.~and in $L^1$.
 Here   $\rho_k \in [0,1]$ are nonincreasing with $\rho_1 =1$, $\lim_{k \to \infty} \rho_k =0$, and $\sum_{k \in \N} \rho_k = 2$.
Moreover,
 there exist finite positive constants $A,A',C,C'$ such that, for all $k \in \N$,
 \begin{equation}
 \label{degbounds}
  A' \re^{-C'k} \leq \rho_k  \leq A \re^{-Ck} ,\end{equation}
and
\begin{equation}
\label{eq:rho_lower_bound}
\frac{1}{2} \log \left( 1 + (2^{2d} -1)^{-1} \right) \leq
\liminf_{k \to \infty} \left( - k^{-1} \log \rho_k \right) \leq
\limsup_{k \to \infty} \left( - k^{-1} \log \rho_k \right) \leq 1 .
\end{equation}
  Finally, there exists a constant $D < \infty$ for which, a.s., for all $n$ sufficiently large,
  \begin{equation}
  \label{ongmax}
  \max_{0 \leq i \leq n} \deg_n  (i) \leq D \log n .\end{equation}
\end{theorem}

This result extends a result of Berger {\em et al.} \cite{bbbcr}. Specifically, \cite[Theorem 3]{bbbcr} showed
\[
 A' \re^{-C'k} \leq \liminf_{n \to \infty} n^{-1} \Exp [  N^\mathrm{ONG}_n (k) ]  \leq \limsup_{n \to \infty} n^{-1}  \Exp [  N^\mathrm{ONG}_n (k) ]
 \leq A \re^{-Ck} ,   \]
in the special case where $d=2$ and $f$ is the indicator of the unit square $S = (0,1)^2$.
 Our proof
of   Theorem \ref{ongdeg}, which we give in Section \ref{sec:ongproof} below,
 is based in part on the proof of the analogous result in \cite{bbbcr}, with additional arguments required
 to obtain the existence of the limit  and the almost-sure convergence in   \eqref{deglim}.
Some extra work is also needed to obtain the quantitative bounds in 
  \eqref{eq:rho_lower_bound}: 
the $d=2$ case of the lower bound, $\frac{1}{2} \log \frac{16}{15}$,
is contained in the argument of \cite{bbbcr}; the other bounds are new.

\begin{remark}
In view of \eqref{eq:rho_lower_bound}, it is natural to conjecture that, for each $d \in \N$, 
\[   \lim_{k \to \infty} \left( - k^{-1} \log \rho_k \right) = \mu(d) \in (0, 1] \]
exists;
the upper bound of $1$ comes from \eqref{eq:rho_lower_bound}. In \cite[Section 7.6.5]{survey} it was conjectured that one might have $\mu(d) = \mu = 1$. 
The analogous but simpler, non-spatial, \emph{uniform attachment} model in which
 vertex $n$ is connected uniformly
at random to a vertex from $\{0,1,\ldots, n-1\}$ leads to an analogous
result with $\mu = \log 2$, as follows from the discussion in \cite[\S 4]{brst}.
The present authors suspect that $\mu(d)$ exists,
but think it unlikely that $\mu (d) \in \{1, \log 2\}$ for any $d \in \N$;
we conjecture, however, that $\lim_{d \to \infty} \mu (d) = \log 2$,
so we do not expect the lower bound in \eqref{eq:rho_lower_bound}, which tends to $0$ as $d \to \infty$, to be sharp. 
Simulations suggest that $\mu(1) \approx 0.79$, $\mu(2) \approx 0.77$, and $\mu(100) \approx 0.69$ (see Table \ref{table1}
for simulation results).
It may be possible to estimate $\mu (d)$
using
the infinite Poisson description of $\rho_k$.
\end{remark}

\begin{table}
\center
\footnotesize{
\begin{tabular}{c|cccccccccc}
degree & 1 & 2 & 3 &4 & 5 & 6 & 7 & 8 & 9 & 10  \\ 
\hline
$d=1$ &
$0.4728$ &
$0.2675$ &
$0.1394$ &
$0.0670$ &
$0.0304$ &
$0.0132$ &
$0.0056$ &
$0.0024$ &
$0.0001$ &
$0.0000$ \\
\hline
$d=2$ &
$0.4777$ & 
$0.2636$ & 
$0.1369$ & 
$0.0668$ &
$0.0308$ & 
$0.0137$ &
$0.0060$ &
$0.0026$ &
$0.0001$ &
$0.0000$ \\
\hline
$d=100$  & 
$0.4999$ & 
$0.2501$ & 
$0.1250$ & 
$0.0625$ &
$0.0312$ & 
$0.0156$ &
$0.0078$ &
$0.0039$ &
$0.0002$ &
$0.0001$ 
\end{tabular}
}
\caption{Estimated $\Pr[ \xi (0 , U ; \cH ) = k]$ for $1 \leq k \leq 10$, for $d \in \{1,2,100\}$. For each $d$,
the estimates are based on 
$500$ simulations with $n = 10^5$ for $f$ the uniform density on the $d$-dimensional torus. 
Values are given to 4dp; for $k \geq 11$ all values are
$0.0000$ to 4dp.}
\label{table1}
\end{table}

\subsection{Geometric preferential attachment graph}
\label{sec:gpagraph}

The version of the geometric preferential attachment (GPA)
model that we   study is as follows; often our  notation coincides with \cite{jordan2010}. We  define
a (random) sequence of finite graphs $G_n = (V_n, E_n)$, $n \in \N$.
The vertex set of $G_n$
is $V_{n} =  \{ 0, 1, \ldots, n\}$.
For $v \in V_n$, we denote by $\deg_n(v)$ the degree of $v$ in the GPA graph $G_n$ (viewed as an undirected graph);
this notation is the same as for degrees in the ONG, but the graph under consideration will be clear in context.
 
The construction uses an {\em attractiveness function} $F : (0,\infty) \to (0,\infty)$.
Recall that $X_0, X_1, \ldots$ are random sites in $S$.
There is some flexibility in
exactly how to start the construction, and one may start with some initial
fixed graph without changing any of our results. For notational simplicity, we start with an initial graph $G_1 = (V_1, E_1)$
consisting of vertices with labels $0$ and $1$ joined by a single edge, so $V_1 = \{0, 1\}$ and $E_1 = \{ (1,0) \}$.
(As in the ONG, there will be a natural direction associated to each edge by the construction, but we typically ignore these directions
when talking about properties of the graphs.)
Vertices $0$ and $1$ are associated with sites $X_0$ and $X_1$ in $S$, respectively.
 
We proceed via iterated addition of vertices to construct $G_{n+1}=(V_{n+1}, E_{n+1})$ from $G_n = (V_n,E_n)$,
$n \in \ZP$.
Given $G_{n}$, $n \in \N$,
  and the spatial locations $\cX_{n}$ of its vertices,
we add a vertex with label $n+1$ at site $X_{n+1} \in S$, and
we add a new edge $( n+1, v_{n+1} )$ where $v_{n+1}$ is chosen randomly from $V_{n}$
with distribution specified by
\begin{equation}
\label{rule}
  \Pr [ v_{n+1} = v \mid G_{n}, \cX_{n+1} ] = \frac{ \deg_{n} (v) F ( \rho ( X_{v}, X_{n+1} ) ) }{ D_{n} (X_{n+1} ) } , ~~~ v \in V_n,\end{equation}
where for $n \in \N$ and $x \in S$,
\[ D_n (x) := \sum_{v \in V_n } \deg_n (v)  F ( \rho (   X_v, x ) ) .\]

We call $G_n$ so constructed a GPA graph with attractiveness function $F$.
In \cite{jordan2010}, it was assumed that
$\int_S F( \rho(x,y) ) \ud y < \infty$, so that  the attractiveness function $F$
should not blow up too rapidly at $0$.
In this paper, our primary interest is in functions $F$
for which this condition is not satisfied.

\subsection{Strong geometric regime}
\label{sec:strong}

For $\gamma >1$, define $F_\gamma$ for $r>0$ by
\[ F_\gamma (r) := \exp \{ (\log (1/r) )^\gamma \}. \]
Note that $F_\gamma(r)$ blows up at $0$ faster than $r^{-s}$ for any power $s$.
Recall  that the convention $\log x \equiv \max\{ 0 ,\log x\}$ is in force, so
  $F_\gamma (r) = 1$ for $r \geq 1$. Also, $F_\gamma (r)$ is strictly decreasing
  for $r \in (0,1)$, with $F_\gamma (r) \to \infty$ as $r \downarrow 0$.

Our main result in this setting   (i) gives an almost-sure degree bound
analogous to  \eqref{ongmax} above for the ONG, and (ii) shows
that 
the limiting degree sequence
for the GPA graph is the same as for the ONG,  for a strong enough geometric component to the interaction (under the
condition $\gamma > 3/2$).
Let $N^\mathrm{GPA}_n (k)$ denote the number of vertices with degree at least $k$ in the GPA graph $G_n$.

\begin{theorem}
\label{thm1}
Suppose that \eqref{fcon} holds and that
$F  = F_\gamma $ for some $\gamma > 1$.
\begin{itemize}
\item[(i)] For any $\nu \in (0,1)$
with $\nu > 2-\gamma$,   a.s., for all   $n$ sufficiently large,
\begin{equation}
\label{gpadeg}
 \max_{0 \leq i \leq n} \deg_n (i) \leq   \exp \{   (\log n)^\nu \} .\end{equation}
\item[(ii)] Suppose that $\gamma > 3/2$. Then $\lim_{n \to \infty} \Pr [ v_n = \eta_1 (n) ] = 1$ and
 the expected number of vertices in the GPA graph that are joined to a vertex other than
their on-line nearest neighbour satisfies
\begin{equation}
\label{nnlim}
\lim_{n \to \infty} 
n^{-1}
\Exp \sum_{i=1}^n \1 \{ v_i \neq \eta_1 (i) \} = 0 .
\end{equation}
Moreover, for any $k \in \N$,
\begin{equation}
\label{gpa_to_ong}
 \lim_{n \to \infty} n^{-1}  N^\mathrm{GPA}_n (k)  =  \lim_{n \to \infty} n^{-1} \Exp [ N^\mathrm{GPA}_n (k) ] = \rho_k ,\end{equation}
the first limit equality holding in $L^1$,
 where $\rho_k$ is the limiting degree sequence
for the ONG as given in Theorem \ref{ongdeg}.
\end{itemize}
\end{theorem}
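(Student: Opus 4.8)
The plan is to prove parts (i) and (ii) more-or-less in tandem, since the degree bound in (i) will be an essential input to the coupling argument in (ii). I would begin with part (i). Fix $\nu \in (0,1)$ with $\nu > 2-\gamma$. The key quantity to control is $D_n(X_{n+1})$, the normalizing constant in \eqref{rule}: when a new vertex $n+1$ arrives, the probability it attaches to a vertex $v$ at distance $r$ from $X_{n+1}$ is proportional to $\deg_n(v) F_\gamma(r)$, so a vertex $i$ can only increase its degree when $X_{n+1}$ lands close to $X_i$, and even then only if no other vertex is overwhelmingly more attractive. The first step is a geometric lemma: under \eqref{fcon}, with high probability every vertex has a predecessor (or successor) very close to it --- indeed, on scale $n^{-1/d}$ up to logarithmic factors --- which forces $D_n(X_{n+1})$ to be at least of order $F_\gamma(n^{-1/d}\,\mathrm{polylog})$, a quantity growing like $\exp\{ c (\log n)^\gamma \}$. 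Then, for a fixed vertex $i$ to reach degree $\exp\{(\log n)^\nu\}$ by time $n$, one needs that many "successful" attachments to $i$; each such attachment requires $X_{m+1} \in B(X_i; r)$ with $F_\gamma(r)/D_m(X_{m+1})$ not too small, and a union bound over $i$, over the (few) possible times, and over dyadic distance scales, combined with the lower bound on $D_m$, shows this is exponentially unlikely at the relevant scale. The condition $\nu > 2 - \gamma$ is exactly what makes $\exp\{(\log n)^\nu\}$ beat the relevant error terms; I would use Borel--Cantelli along a subsequence $n_j$ and monotonicity to fill in between. This is essentially the argument underlying \eqref{ongmax} in Theorem \ref{ongdeg}, adapted to accommodate the preferential-attachment factor $\deg_n(v)$, and the main care is in bounding how much the $\deg_n(v)$ factors can distort the competition.

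For part (ii), assume $\gamma > 3/2$. The heart of the matter is the estimate $\Pr[v_n \neq \eta_1(n)] \to 0$ together with the stronger averaged statement \eqref{nnlim}. Conditionally on $\cX_n$ and $G_{n-1}$, vertex $n$ fails to attach to its on-line nearest neighbour $\eta_1(n)$ precisely when some other vertex $v$ "wins" the competition in \eqref{rule}. Writing $r^\star := \rho(X_n, X_{\eta_1(n)})$ for the nearest-neighbour distance and $r_v := \rho(X_n, X_v)$, the conditional probability of attaching to $v \neq \eta_1(n)$ is
\[
\frac{\deg_{n-1}(v) F_\gamma(r_v)}{D_{n-1}(X_n)} \le \frac{\deg_{n-1}(v) F_\gamma(r_v)}{F_\gamma(r^\star)},
\]
so the probability of any failure is at most $F_\gamma(r^\star)^{-1} \sum_{v} \deg_{n-1}(v) F_\gamma(r_v)$, and by part (i) we may replace $\deg_{n-1}(v)$ by the deterministic bound $\exp\{(\log n)^\nu\}$ off an event of negligible probability. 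The ratio $F_\gamma(r_v)/F_\gamma(r^\star) = \exp\{(\log(1/r_v))^\gamma - (\log(1/r^\star))^\gamma\}$ decays super-polynomially in the ratio $r^\star/r_v$ because of the convexity of $t \mapsto t^\gamma$ for $\gamma>1$; combined with the fact that under \eqref{fcon} the nearest-neighbour distance $r^\star$ is of order $n^{-1/d}$ (with exponential tails on both sides after rescaling, a standard Poissonization / de-Poissonization estimate), one gets that the expected number of vertices "competitive" with $\eta_1(n)$ is small. Summing the resulting bounds over $v$ (the number of vertices within distance $\delta$ of $X_n$ is controlled by a Poisson-type count) and then over $n$, dividing by $n$, gives \eqref{nnlim}; the exponent $3/2$ is the threshold at which the gain from $(\log(1/r^\star))^\gamma$ outpaces the loss from the factor $\exp\{(\log n)^\nu\}$ with $\nu > 2 - \gamma$, i.e.\ we need $\gamma > 2 - \gamma$.

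Given \eqref{nnlim}, the final identity \eqref{gpa_to_ong} follows by coupling: construct the GPA graph and the ONG on the same point sequence $(X_0, X_1, \ldots)$, using independent uniform randomness in \eqref{rule}, so that the edge from vertex $i$ agrees in the two graphs on the event $\{v_i = \eta_1(i)\}$. Let $K_n := \sum_{i=1}^n \1\{v_i \neq \eta_1(i)\}$ be the number of "discrepant" edges up to time $n$; each discrepant edge changes the degree of at most two ONG-vertices and two GPA-vertices, so
\[
\bigl| N^{\mathrm{GPA}}_n(k) - N^{\mathrm{ONG}}_n(k) \bigr| \le 4 K_n
\]
for every $k$. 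By \eqref{nnlim}, $n^{-1}\Exp K_n \to 0$, hence $n^{-1} \Exp | N^{\mathrm{GPA}}_n(k) - N^{\mathrm{ONG}}_n(k) | \to 0$, and since $n^{-1} \Exp[N^{\mathrm{ONG}}_n(k)] \to \rho_k$ and $n^{-1} N^{\mathrm{ONG}}_n(k) \to \rho_k$ in $L^1$ by Theorem \ref{ongdeg}, the same holds for $N^{\mathrm{GPA}}_n(k)$, which is \eqref{gpa_to_ong}. I expect the main obstacle to be the first step of part (ii): getting a sufficiently sharp, uniform-in-$n$ control of the competition ratio $\sum_v \deg_{n-1}(v) F_\gamma(r_v) / F_\gamma(r^\star)$, because it requires simultaneously the a.s.\ degree bound from part (i), good tail control on the on-line nearest-neighbour distance $r^\star$ (which is delicate near the boundary of $S$, though \eqref{fcon} keeps it manageable), and a careful dyadic decomposition of the distances $r_v$ to sum the super-polynomially small contributions; the number-theoretic balancing of the exponents $\gamma$ and $\nu$ must be done with enough room to absorb all the polylogarithmic slack.
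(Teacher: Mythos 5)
Your outline of part (i) and your coupling step for \eqref{gpa_to_ong} follow essentially the paper's own route (the edge from vertex $j$ reaches beyond $\beta(j,\nu)=j^{-1/d}\exp\{(\log j)^\nu\}$ only finitely often, so degrees are dominated by a purely geometric count of later arrivals near $X_i$, handled by concentration; then the GPA and ONG are coupled and the discrepancy count $\sum_{i\le n}\1\{v_i\ne\eta_1(i)\}$, which is $o(n)$ in expectation, perturbs at most $O(1)$ degrees per discrepant edge). The genuine gap is in part (ii), at precisely the point where $\gamma>3/2$ must enter. You reduce to showing $F_\gamma(r^\star)^{-1}\sum_{v\ne\eta_1(n)}\deg_{n-1}(v)F_\gamma(r_v)\to 0$ and dismiss the sum on the grounds that $F_\gamma(r_v)/F_\gamma(r^\star)$ decays super-polynomially in $r^\star/r_v$. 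But the dangerous competitors are those with $r_v=r^\star(1+\delta)$, $\delta\to 0$: for $r^\star\asymp n^{-1/d}$ a Taylor expansion gives $F_\gamma(r_v)/F_\gamma(r^\star)=\exp\{-c(\log n)^{\gamma-1}\delta(1+o(1))\}$, which does not beat the degree factor $\exp\{(\log n)^\nu\}$ from part (i) unless $\delta\gtrsim(\log n)^{\nu+1-\gamma}$. One therefore has to prove that, with probability $1-o(1)$, the annulus $B(X_n;Z_n+\theta_n)\setminus B(X_n;Z_n)$ contains no earlier point, where $\theta_n\ge n^{-1/d}(\log n)^{\nu+1-\gamma+\eps}$; emptiness of the annulus with high probability requires $\theta_n=o(n^{-1/d})$, i.e.\ $\nu<\gamma-1$, while killing the contribution of the up to $n$ vertices beyond $\beta(n,\nu)$ (each with crude degree $\le n$) requires $\nu>2-\gamma$, exactly as in your part (i). Compatibility of these two constraints is what forces $\gamma-1>2-\gamma$, i.e.\ $\gamma>3/2$; this is the shell argument of Lemma \ref{lem4}, together with the event bounding the number of predecessors within $\beta(n,\nu)$ of $X_n$ by $O(\exp\{d(\log n)^\nu\})$, and it is absent from your sketch.

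Your own accounting of the threshold confirms the problem: you write that one needs ``$\gamma>2-\gamma$'', which gives $\gamma>1$, not $\gamma>3/2$ — the gain from the attractiveness ratio is of order $(\log n)^{\gamma-1}n^{1/d}\theta_n$ with $\theta_n=o(n^{-1/d})$, not $(\log n)^{\gamma}$. Without the empty-annulus step and the constraint $\nu<\gamma-1$, your argument would appear to prove the conclusion for every $\gamma>1$, which the paper explicitly cannot do and records only as a conjecture. Two smaller points: in part (i) the count of later arrivals within distance $\beta(\cdot,\nu)$ is of order $\exp\{d(\log n)^\nu\}$ (and $\exp\{2d(\log n)^\nu\}$ after the concentration step), so to obtain the stated bound $\exp\{(\log n)^\nu\}$ you must run the argument at some $\nu'\in(2-\gamma,\nu)$ and use that the exponent was arbitrary; and the lower bound $D_{n-1}(X_n)\ge F_\gamma(Z_n)$ you use is available only after controlling $Z_n$ itself (Lemma \ref{nnlem}), which should be stated as part of the conditioning rather than assumed.
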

 
We give the proof of Theorem \ref{thm1} in Section \ref{sec:proofs1}.
 
\begin{remark}
The statements \eqref{nnlim} and \eqref{gpa_to_ong} are $L^1$ convergence results, and hence imply
convergence in probability for the quantities concerned. It would be of interest
 to extend \eqref{nnlim} and \eqref{gpa_to_ong} to almost sure convergence. One possible approach would be via a concentration argument similar to that we use in the case of the ONG (see
Lemma \ref{ongconc} below), but this seems to require better tail bounds on large degrees in the GPA graph.
\end{remark}

\begin{conjecture}
We suspect that the conclusion of Theorem \ref{thm1}(ii) is valid for {\em any} $\gamma > 1$.
\end{conjecture}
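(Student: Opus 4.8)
The plan is to establish directly that $\Pr[v_n = \eta_1(n)] \to 1$ for every $\gamma > 1$, after which \eqref{nnlim} and \eqref{gpa_to_ong} follow from this per-step statement exactly as in the case $\gamma > 3/2$: a Cesàro average of the per-step probabilities gives \eqref{nnlim}, and the coupling of $G_n$ with the ONG under which the two graphs coincide on every step where $v_i = \eta_1(i)$ gives \eqref{gpa_to_ong} in $L^1$. The restriction $\gamma > 3/2$ in Theorem \ref{thm1}(ii) arises only because the existing estimate for $\Pr[v_n \neq \eta_1(n)]$ bounds the degrees of all competing vertices by the global maximum $\max_i \deg_n(i) \le \exp\{(\log n)^\nu\}$ from part (i), which forces $\nu < \gamma - 1$ while part (i) supplies only $\nu > 2 - \gamma$. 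The central idea is to discard this wasteful bound and instead use the \emph{exact} total degree $\sum_{v} \deg_n(v) = 2n$, together with the fact that a high-degree vertex necessarily lies in a region into which many of the i.i.d.\ points $\cX_n$ have fallen, and hence has a small geometric ``catchment''.

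First I would reduce the problem to a single weighted sum. Bounding the denominator in the attachment rule \eqref{rule} by its nearest-neighbour term and using $\deg_n(\eta_1) \ge 1$ gives
\begin{equation*}
\Pr[v_{n+1} \neq \eta_1(n+1)] \le \Exp\Bigl[ \sum_{v \neq \eta_1} \deg_n(v) \, \frac{F_\gamma(\rho(X_v, X_{n+1}))}{F_\gamma(r_1)} \Bigr],
\end{equation*}
where $r_1$ is the distance from $X_{n+1}$ to its nearest neighbour in $\cX_n$. Integrating over the independent site $X_{n+1}$ and writing $\eta_1(x)$ for the nearest neighbour of a generic site $x$ among $\cX_n$, the right-hand side equals $\Exp[\sum_v \deg_n(v)\, g_v]$ with the purely geometric weight
\begin{equation*}
g_v := \int_{\{x \,:\, \eta_1(x) \neq v\}} \frac{F_\gamma(\rho(X_v, x))}{F_\gamma(r_1(x))}\, f(x)\, \ud x .
\end{equation*}
The point of this rearrangement is that $g_v$ is $\sigma(\cX_n)$-measurable and carries no degree, so the degree dependence sits entirely in the factor $\deg_n(v)$ now coupled to the conserved quantity $\sum_v \deg_n(v) = 2n$.

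The heart of the matter is a scale-invariant computation for the \emph{unweighted} sum. Since $r_1(x) \asymp n^{-1/d}$ with overwhelming probability, a first-order expansion gives $F_\gamma(r_j)/F_\gamma(r_1) \approx (r_j/r_1)^{-\beta_n}$ for the $j$-th nearest-neighbour distance $r_j$, with $\beta_n \asymp (\log n)^{\gamma - 1} \to \infty$ for every $\gamma > 1$. Because the ordered ratios $r_j/r_1$ of a locally Poisson configuration are scale invariant, a direct Palm-type computation yields $\Exp[\sum_{j \ge 2} (r_j/r_1)^{-\beta_n}] \asymp \beta_n^{-1}$, whence $\Exp[\sum_v g_v] \asymp (\log n)^{-(\gamma - 1)} \to 0$. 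Thus the \emph{average} competition weight already vanishes; what remains is to show that weighting by $\deg_n(v)$ does not destroy this. Here I would exploit the negative correlation between degree and catchment: a vertex with cell of $f$-measure $c_v$ has $g_v \lesssim c_v \beta_n^{-1}$ (the extra factor being the same scale-invariant ratio), while in the strong-geometric regime its degree is controlled by the number of i.i.d.\ points falling near it, $\deg_n(v) \lesssim n c_v$. Substituting gives $\sum_v \deg_n(v) g_v \lesssim \beta_n^{-1} \sum_v n c_v^2$, and $\sum_v n c_v^2 = O(1)$ in expectation for densities satisfying \eqref{fcon}, yielding the required $\Exp[\sum_v \deg_n(v) g_v] \lesssim (\log n)^{-(\gamma-1)} \to 0$.

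The main obstacle is making rigorous the bound $\deg_n(v) \lesssim n c_v$ uniformly for all $\gamma > 1$, i.e.\ showing that the preferential-attachment boost cannot push a vertex's degree much beyond its local i.i.d.\ point count. This is precisely the place where the crude global bound of part (i) was invoked before and where it is too weak for $\gamma \le 3/2$; the difficulty is that a vertex could in principle accumulate degree from points at the second, third, $\ldots$ nearest scales, not only from those in its own cell. I would attack this by a martingale argument for the conditional growth of $\deg_n(v)$ given the locations $\cX_n$, comparing it with the growth of the in-degree of the corresponding vertex in the ONG and showing that the discrepancy is of lower order once $\beta_n \to \infty$. A successful such comparison would control $\Exp[\deg_n(v) g_v]$ without ever passing through $\max_i \deg_n(i)$, thereby removing the $\gamma > 3/2$ restriction and confirming the conjecture.
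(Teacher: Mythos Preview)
This statement is a \emph{conjecture} in the paper; the authors do not prove it, so there is no paper proof to compare against. Your proposal is therefore an attempt to settle an open problem, and it should be judged on its own merits.

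The broad strategy---replacing the wasteful bound $\max_i \deg_{n-1}(i)$ by an argument that exploits $\sum_v \deg_{n-1}(v)=2(n-1)$ together with a geometric weight $g_v$ that is small for most vertices---is a sensible line of attack, and your heuristic for the unweighted sum $\Exp\bigl[\sum_{j\ge 2}(r_j/r_1)^{-\beta_n}\bigr]\asymp\beta_n^{-1}$ is plausible. However, the argument breaks down at the step you yourself flag as the ``main obstacle'', and in fact the specific inequality you write there is false rather than merely hard to justify.

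You claim $\deg_n(v)\lesssim n c_v$ where $c_v$ is the current Voronoi cell measure (the same $c_v$ that yields $g_v\lesssim c_v\beta_n^{-1}$). But even in the ONG this fails: vertex $0$ has $\Exp[\deg_n(0)]\asymp\log n$ while $\Exp[n\,c_0]=n\cdot\frac{1}{n+1}\asymp 1$, so the proposed bound is violated by a factor $\log n$ for early vertices, precisely the ones carrying most of the degree. The underlying reason is the one you allude to as ``negative correlation'': a vertex accumulates degree through the \emph{history} $\sum_{m\ge v}|\cV_m(v)|$ of its shrinking cell, not through $n$ times its \emph{final} cell, and a high degree forces the final cell to be small. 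Consequently the substitution $\sum_v\deg_n(v)g_v\lesssim\beta_n^{-1}\sum_v n c_v^2$ is not valid; the two factors $n c_v$ and $c_v$ refer to incompatible quantities.

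Your proposed remedy---a martingale comparison showing that the GPA degree of each vertex tracks its ONG degree up to lower order once $\beta_n\to\infty$---is essentially the conjecture restated at the level of individual vertices. Proving that $v_n=\eta_1(n)$ with probability $1-o(1)$ is equivalent to proving that the per-step degree increments in the GPA and the ONG agree with probability $1-o(1)$; a bootstrap of this kind would need a quantitative input (e.g.\ an a priori degree bound sharper than part~(i) for $\gamma\le 3/2$) that you have not supplied. As written, the proposal is a heuristic outline with a genuine gap at its core, not a proof.
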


\subsection{Intermediate regime: power-law attractiveness}
\label{sec:power-law}

Take $F(r) = r^{-s}$ for $s \in (0,\infty)$.
The next result contrasts with \eqref{nnlim} in the strong geometric attraction regime,
and shows that in this case, in expectation,
there is a non-negligible proportion of vertices not
connecting to their nearest neighbour.

\begin{theorem}
\label{thm:int2}
Suppose that \eqref{fcon} holds and   $F(r) = r^{-s}$ for $s \in (0,\infty)$.
Then $\limsup_{n \to \infty} \Pr [ v_n = \eta_1 (n) ] < 1$ and
 the expected number of vertices in the GPA graph that are joined to a vertex other than
their on-line nearest neighbour satisfies
\begin{equation}
\label{notnn}
\liminf_{n \to \infty} 
n^{-1}
\Exp \sum_{i=1}^n \1 \{ v_i \neq \eta_1 (i) \} > 0 .
\end{equation}
\end{theorem}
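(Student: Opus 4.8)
The plan is to establish the (formally stronger) statement that $\liminf_{n\to\infty}\Pr[v_{n+1}\neq\eta_1(n+1)]>0$. This suffices: it gives $\limsup_{n}\Pr[v_n=\eta_1(n)]<1$ directly, and since $n^{-1}\Exp\sum_{i=1}^n\1\{v_i\neq\eta_1(i)\}=n^{-1}\sum_{i=1}^n\Pr[v_i\neq\eta_1(i)]$ is a Ces\`aro average of a sequence with strictly positive liminf, \eqref{notnn} follows as well. So the task is a uniform positive lower bound on $\Pr[v_{n+1}\neq\eta_1(n+1)]$.

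To get this I would condition on $G_n$ and on all the locations $\cX_{n+1}$, and produce an explicit competitor for the nearest neighbour. Let $u_1=\eta_1(n+1)$ and let $u_2$ be the (a.s.\ unique) second-nearest point of $\cX_n$ to $X_{n+1}$, at distances $\rho_1\leq\rho_2$. Since every GPA degree is at least $1$, we have $D_n(X_{n+1})\geq\deg_n(u_1)\rho_1^{-s}+\deg_n(u_2)\rho_2^{-s}\geq\deg_n(u_1)\rho_1^{-s}+\rho_2^{-s}$, so \eqref{rule} gives
\[ \Pr[v_{n+1}=\eta_1(n+1)\mid G_n,\cX_{n+1}]\leq\frac{\deg_n(u_1)\rho_1^{-s}}{\deg_n(u_1)\rho_1^{-s}+\rho_2^{-s}} . \]
On the $(G_n,\cX_{n+1})$-measurable event $\{\rho_2\leq2\rho_1\}\cap\{\deg_n(u_1)\leq M\}$ the right-hand side is at most $(1+2^{-s}/M)^{-1}=:p_0<1$, whence
\[ \Pr[v_{n+1}\neq\eta_1(n+1)]\geq(1-p_0)\,\Pr\bigl[\rho_2\leq2\rho_1,\ \deg_n(\eta_1(n+1))\leq M\bigr] . \]
It therefore remains to exhibit a finite $M$ with $\liminf_n\Pr[\rho_2\leq2\rho_1,\ \deg_n(\eta_1(n+1))\leq M]>0$, and I would split this into: (a) a purely geometric bound $c_1:=\inf_n\Pr[\rho_2\leq2\rho_1]>0$; and (b) tightness of the degree of the Voronoi owner of the incoming point, $\sup_n\Pr[\deg_n(\eta_1(n+1))>M]\to0$ as $M\to\infty$. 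Given (a) and (b), choosing $M$ with $\sup_n\Pr[\deg_n(\eta_1(n+1))>M]<c_1/2$ gives the required bound with value $c_1/2$.

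Part (a) is routine: conditioning on $X_{n+1}=x$, the nearest-neighbour distance $\rho_1$ lies in a window of order $n^{-1/d}$ with probability bounded below uniformly in $x\in S$ (using \eqref{fcon} together with compactness and convexity of $S$ to control boundary effects), and on that window the expected number of further points of $\cX_n$ in the shell $B(x;2\rho_1)\setminus B(x;\rho_1)$ is bounded below, so the shell is occupied with probability bounded below; a scaling argument makes this precise. For (b) I would bound $\Exp[\deg_n(\eta_1(n+1))]=\Exp\bigl[\sum_{v\in V_n}\deg_n(v)\,\mu(\mathrm{Vor}_v(\cX_n)\cap S)\bigr]$ ($\mu$ the law of $X_{n+1}$) and invoke Markov. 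Conditioning on all locations $\cX_n$, the rule \eqref{rule} and the crude bound $D_{i-1}(X_i)\geq\sum_{w<i}\rho(X_w,X_i)^{-s}$ give $\Exp[\deg_i(v)\mid\cX_n]\leq(1+p_i(v))\Exp[\deg_{i-1}(v)\mid\cX_n]$ with $p_i(v):=\rho(X_v,X_i)^{-s}/\sum_{w<i}\rho(X_w,X_i)^{-s}$, hence $\Exp[\deg_n(v)\mid\cX_n]\leq\prod_{i=v+1}^n(1+p_i(v))$. One then combines this with the exchangeability identity $\Exp[p_i(v)]=1/i$ for $v<i$, the standard Voronoi estimate $\Exp[\sum_v\mu(\mathrm{Vor}_v(\cX_n)\cap S)^2]=O(1/n)$, and a split of $V_n$ into recently added vertices $v$ (for which $\prod_{i>v}(1+p_i(v))$ is close to $1$) and early ones (whose total contribution is controlled against the smallness of their cells). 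A variant of (b) that would already suffice: the label of $\eta_1(n+1)$ is, given the point configuration, uniform on $\{0,\dots,n\}$ and so independent of the geometry, which yields $\Pr[\{\rho_2\leq2\rho_1\}\cap\{\eta_1(n+1)\geq(1-\theta)n\}]\geq c_1\theta/2$, and it then suffices to show $\Pr[\eta_1(n+1)\geq(1-\theta)n,\ \deg_n(\eta_1(n+1))\geq2]\leq c_1\theta/4$ for some small $\theta$, for which only the last $\theta n$ vertices enter the recursion above.

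The step I expect to be the genuine obstacle is (b). The recursion controls $\Exp[\deg_n(v)\mid\cX_n]$ only by the random product $\prod_{i>v}(1+p_i(v))$, which is coupled to $\mu(\mathrm{Vor}_v(\cX_n))$ — heuristically a vertex in a locally sparse region has both a larger Voronoi cell and, via the $p_i(v)$, a larger chance of accumulating degree — so turning the heuristic $\Exp[\deg_n(v)\mid\cX_n]\,\mu(\mathrm{Vor}_v(\cX_n))\approx 1/n$ into a bound uniform in $n$ (and not off by a factor $\log n$) requires either a decorrelation estimate between a vertex's accumulated degree and the measure of its cell, or a sharper second-moment/large-deviation analysis of $\sum_{i>v}p_i(v)$ when $v$ owns the cell containing $X_{n+1}$. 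This is also where the precise form $F(r)=r^{-s}$ and the hypothesis \eqref{fcon} are used, and one should expect the analysis to split according as $s<d$ (when $\sum_{w<i}\rho(X_w,X_i)^{-s}$ grows linearly in $i$) or $s\geq d$ (when it is dominated by the points nearest $X_i$).
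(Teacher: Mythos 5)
Your reduction and your opening step are sound and essentially coincide with the published proof: conditioning on $G_n$ and $\cX_{n+1}$, comparing the attachment weight of the nearest neighbour with that of the second-nearest neighbour, and concluding that on the event where the first two nearest-neighbour distances are comparable and $\deg_n(\eta_1(n+1))$ is bounded, the rejection probability is bounded away from zero --- this is exactly \eqref{e10} together with \eqref{e12} in the paper, and your part (a) is indeed routine via the analogue of Lemma \ref{nnlem}. The gap is in your part (b), and it is a genuine one, which you yourself flag. You have arranged matters so that the degree event must carry probability close to $1$ (tightness of $\deg_n(\eta_1(n+1))$), because your geometric event $\{\rho_2\le 2\rho_1\}$ is only bounded below in probability; and your proposed route to tightness --- the product bound $\Exp[\deg_n(v)\mid\cX_n]\le\prod_{i>v}(1+p_i(v))$ combined with $\Exp[p_i(v)]=1/i$ and second moments of Voronoi cell volumes --- founders on exactly the degree/cell-size correlation you identify. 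The same obstruction defeats your ``variant'': the quantity $\Pr[\eta_1(n+1)\ge(1-\theta)n,\ \deg_n(\eta_1(n+1))\ge 2]$ equals $\sum_{v\ge(1-\theta)n}\Exp\bigl[\1\{\deg_n(v)\ge2\}\int_{\cV_n(v)}f(x)\,\ud x\bigr]$, which again couples a vertex's accumulated degree to the measure of its cell; moreover for the GPA rule $\Pr[v_i=v]$ is \emph{not} $1/i$, since the degree weighting destroys the label-exchangeability you are implicitly using. As written, part (b) is not a proof.

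The fix is to prove much less. You only need $\liminf_n\Pr[\deg_n(\eta_1(n+1))\le k_0]>0$ for some \emph{fixed} $k_0$, provided you strengthen the geometric event to have probability close to $1$ (replace $\rho_2\le 2\rho_1$ by $\rho_2\le K\rho_1$ for a large constant $K$; this follows from your part (a) estimates, since $\rho_1\ge cn^{-1/d}$ and $\rho_2\le Cn^{-1/d}$ each hold with probability close to $1$). That positive lower bound is a counting statement which completely sidesteps the correlation issue. By the degree sum formula (Lemma \ref{deglem}), at least $4n/5$ of the vertices have degree at most $9$; by the elementary cell-volume bound \eqref{626b} and Markov's inequality, with probability at least $3/4$ at least $n/2$ of the cells $\cV_n(i)$ have volume at least $r_0/n$ for a suitable $r_0>0$; hence at least $3n/10$ vertices simultaneously have degree at most $9$ and a cell of volume at least $r_0/n$, so the union of their cells has volume at least $3r_0/10$, and $X_{n+1}$ lands in that union --- forcing $\deg_n(\eta_1(n+1))\le 9$ --- with probability at least $3\lambda_0 r_0/10$ on an event of probability $3/4$. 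One never needs to know \emph{which} vertices have small degree, only that a positive proportion do and that most cells are not tiny. This is the paper's argument for its claim \eqref{e11}, and with it your proposal closes.
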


Next we examine the degree sequence of the graph. It was proved in Theorem 2.1 of \cite{jordan2010} 
that in the case $s \in (0,d)$, under certain conditions on $S$ and $f$,
the degree distribution of the GPA graph converges to
a power-law distribution, as in the Barab\'asi--Albert model:
  $\lim_{n\to\infty} n^{-1} \Exp [ N^{\rm GPA}_n (k) ] =r_k$
where $r_k \sim 2 k^{-2}$ as $k \to \infty$. 

The next result shows contrasting behaviour when $s > d$: we give a stretched exponential
upper bound for
the tail of the
degree distribution, which thus decays faster than any power law.

\begin{theorem}
\label{thm:int1}
Suppose that \eqref{fcon} holds and  $F(r) = r^{-s}$ for $s > d$.
For any $\gamma \in (0, \frac{s-d}{2s-d} )$, there exists a constant $C < \infty$ such that,
for all $k$,
\[ \limsup_{n \to \infty} n^{-1} N_n^{\rm GPA} (k) \leq C \re^{-k^\gamma} , \as,
 ~~\text{and}~~   \limsup_{n \to \infty} n^{-1} \Exp [ N_n^{\rm GPA} (k) ] \leq C \re^{-k^\gamma} .\]
\end{theorem}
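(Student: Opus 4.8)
The plan is to control the number of high-degree vertices by showing that, for a vertex to acquire large degree, it must "win" many incoming connections, and each win is geometrically expensive when $F(r) = r^{-s}$ with $s > d$. First I would set up the degree-growth dynamics: fix a vertex $v$ born at time $v$ with spatial location $X_v$, and track $\deg_n(v)$ as $n$ increases. At step $n+1$, conditionally on $G_n$ and $X_{n+1}$, vertex $v$ receives the new edge with probability $\deg_n(v) F(\rho(X_v,X_{n+1})) / D_n(X_{n+1})$. The key observation is a lower bound on the denominator: since $X_{\eta_1(n+1)}$ is the nearest neighbour of $X_{n+1}$ among $\cX_n$ and has degree at least $1$, we get $D_n(X_{n+1}) \geq F(\rho(X_{n+1}, X_{\eta_1(n+1)}))$, and more usefully $D_n(X_{n+1}) \geq \sum_{u} \deg_n(u) F(\rho(X_u, X_{n+1}))$ is at least the contribution of all nearby points. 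Because $f$ is bounded below and $s > d$, the typical nearest-neighbour distance from $X_{n+1}$ to $\cX_n$ is of order $n^{-1/d}$, so $F$ evaluated there is of order $n^{s/d}$, which dominates; thus $D_n(X_{n+1})$ is typically at least of order $n^{s/d}$. Consequently, for $v$ to win the edge, we need $\rho(X_v, X_{n+1})$ small — roughly $F(\rho(X_v,X_{n+1})) \gtrsim n^{s/d}/\deg_n(v)$, i.e. $\rho(X_v, X_{n+1}) \lesssim (\deg_n(v))^{1/s} n^{-1/d}$.

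Next I would turn this into a bound on how fast $\deg_n(v)$ can grow. If $\deg_n(v) = k$, then the probability that $X_{n+1}$ falls close enough to $X_v$ to plausibly connect to $v$ is of order (volume of a ball of radius $\sim k^{1/s} n^{-1/d}$) $\sim k^{d/s} n^{-1}$, and conditionally on that event the connection probability is $O(1)$; so $\Pr[\deg_{n+1}(v) = k+1 \mid \deg_n(v)=k] \lesssim k^{d/s} n^{-1}$. Summing a suitable supermartingale / using a union bound over the times at which the degree increments, the number of steps needed to go from degree $k$ to degree $k+1$ is geometric-like with mean of order $n / k^{d/s}$, so reaching degree $k$ by time $n$ requires, heuristically, $\sum_{j=1}^{k} n/j^{d/s}$ to be comparable to $n$ — but the right way to see the stretched exponential is to iterate the multiplicative bound: $\Exp[(\deg_n(v))^{\text{something}}]$ or, better, a moment generating function estimate. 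The cleanest route is: for fixed $\lambda > 0$, estimate $\Exp[\exp\{\lambda (\deg_n(v))^\gamma\}]$ by conditioning step by step and exploiting $(\,k+1)^\gamma - k^\gamma \approx \gamma k^{\gamma-1}$, so that the per-step increment of the exponent is small enough that the geometric cost $k^{d/s}/n$ beats it when $\gamma < (s-d)/(2s-d)$. (The exponent $(s-d)/(2s-d)$ should emerge from balancing: roughly, probability of an increment at degree $k$ is $\sim k^{d/s}/n$, and over the $n$ available steps the degree behaves like a time-changed process whose tail is controlled by solving $k^{\gamma} \sim \int \text{rate}$, yielding the stated threshold.) Then $\Pr[\deg_n(v) \geq k] \leq \re^{-\lambda k^\gamma} \Exp[\exp\{\lambda(\deg_n(v))^\gamma\}] \leq C\re^{-\lambda k^\gamma}$, and summing over $v \in V_n$ gives $\Exp[N_n^{\rm GPA}(k)] \leq C(n+1)\re^{-k^\gamma}$ after absorbing $\lambda$; the almost-sure statement follows by a Borel--Cantelli / concentration argument along a subsequence $n = 2^m$ together with monotonicity of $N_n^{\rm GPA}(k)$ in $k$ and a Markov bound.

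The main obstacle I anticipate is making rigorous the lower bound on $D_n(X_{n+1})$ uniformly enough: the bound $D_n(X_{n+1}) \gtrsim n^{s/d}$ holds only "typically," and there is a bad event — when $X_{n+1}$ is abnormally far from $\cX_n$, or lands near the boundary of $S$ — on which the denominator is small and $v$ could win cheaply. Controlling these requires a careful geometric estimate showing that the probability $X_{n+1}$ lies in a region where its nearest neighbour is farther than $t n^{-1/d}$ decays (in $t$) fast enough that, even on that event, the contribution to the degree growth of any fixed $v$ is negligible; here the compactness and convexity of $S$ and \eqref{fcon} are what save us, giving a uniform lower bound of order $n$ on the expected number of points of $\cX_n$ in any fixed ball. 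A secondary technical point is handling the conditioning correctly — $\deg_n(v)$ and the locations $\cX_n$ are dependent, and $v_{n+1}$ depends on the whole configuration — so I would phrase the step-by-step estimate via the filtration $\cF_n = \sigma(G_n, \cX_n)$ and the explicit conditional law in \eqref{rule}, being careful that the bound on the connection probability of $v$ is valid $\cF_n$-conditionally and integrates to the stated per-step estimate after taking expectations over $X_{n+1}$.
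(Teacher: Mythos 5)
Your plan is a genuinely different route from the paper's: you track the degree evolution of a fixed vertex and aim for a moment-generating-function/supermartingale tail bound, whereas the paper never follows individual vertices. Instead it bounds $\Pr[\deg_{n-1}(v_n)=k\mid\tilde\cF_{n-1}]$ in terms of the current empirical tail proportions $\hier[q]{n-1}{k}$ (see \eqref{indverx} and \eqref{degk}), converts this into a recursion for $\hier[q]{n}{k+1}$ analysed by a stochastic-approximation lemma (Lemma \ref{lem:stoch_approx}), and iterates in $k$ using Lemma 1 of \cite{jordan2006}; the threshold $\gamma<\frac{s-d}{2s-d}$ is precisely the condition $\beta=\gamma+(d/s)(1-\gamma)<1-\gamma$ needed to close that iteration, the almost-sure statement is the primary output, and the expectation bound follows by reverse Fatou. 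Your heuristic, by contrast, points to the exponent $(s-d)/s$ and you leave the emergence of the stated threshold to an unexecuted ``balancing'', which is a sign that the key bookkeeping has not been done.

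The genuine gap is the ingredient you flag as ``the main obstacle'' but do not supply: a quantitative lower-tail bound for the attractiveness denominator, i.e.\ for $\zeta_{n-1}(X_n)=n^{-s/d}\sum_j\rho(X_j,X_n)^{-s}$, valid \emph{conditionally on the past configuration}, which is exactly what a step-by-step supermartingale estimate requires. This is the technical heart of the paper's proof: $\zeta_{n-1}$ stochastically dominates a normalised sum of i.i.d.\ variables in the domain of attraction of a positive stable law of index $d/s$, negative powers of such a limit have finite exponential moments, giving $\Pr[\zeta_{n-1}(X_n)\le t]\le C_0\exp\{-u_0t^{-d/(s-d)}\}$ (Lemma \ref{lem:zeta_unconditional}), and a Talagrand/resampling concentration argument upgrades this to a bound conditional on $\cX_{n-1}$ (Lemma \ref{lem:zeta_conditional}). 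Your proposed substitute --- that the nearest-neighbour distance of $X_{n+1}$ is typically of order $n^{-1/d}$ --- yields only a tail of the form $\exp\{-ct^{-d/s}\}$, strictly weaker than the stable-law bound, and it is not uniform over configurations: on clumped configurations the new point attaches to a distant low-degree vertex with probability of order one, so your claimed per-step bound $\Pr[\deg_{n+1}(v)=\deg_n(v)+1\mid\deg_n(v)=k]\lesssim k^{d/s}n^{-1}$ is not justified in the conditional form your MGF iteration needs. Two further steps fail as stated: $\Exp[\exp\{\lambda(\deg_n(v))^\gamma\}]$ cannot be bounded by a constant uniformly in $v$ and $n$ (early vertices have degrees growing like a power of $\log(n/v)$, and the per-step factors compound to roughly $(n/v)^{C\lambda}$, so the sum over $v$ and the normalisation of the exponent to exactly $k^\gamma$ require taking $\lambda$ small and then shrinking $\gamma$); and the almost-sure statement does not follow from Markov plus Borel--Cantelli along $n=2^m$, since the expectation bound gives no concentration for $N^{\rm GPA}_n(k)$ --- you would need a bounded-differences argument with control of the maximum degree, or an argument of the paper's type in which the a.s.\ bound comes first.
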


This result confirms the presence of a phase transition in the character of the degree distribution
at $s =d$, as intimated in \cite[\S 5]{jordan2010}
and in line with the $d \in \{1,2\}$ 
simulation results of Manna and Sen \cite{ms} (who themselves
actually conjectured that the phase transition point was $s=d-1$). The stretched exponential
for $s>d$ is also consistent with the simulation-based observations of \cite{ms}. We remark that
as $s \to \infty$, Theorem \ref{thm:int1} gives
an upper bound of order almost $\re^{-\sqrt k}$;
it is not clear 
whether this is sharp, although Manna and Sen \cite[p.~3]{ms} do suggest that one might
 expect instead to approach a genuinely exponential tail in the limit $s \to \infty$.

\section{Preliminaries to the proofs}
\label{sec:prelim}

First we state a basic property of the set $S$, under our standing assumptions. Let $\omega_d$ be the volume of the unit-radius Euclidean $d$-ball,
and set   $\mathrm{diam}(S) := \sup_{x, y \in S} \rho(x,y)$.

\begin{lemma}
\label{balls}
There exists $\delta_S >0$ such that, for all $r \in [0, \mathrm{diam} (S) ]$,
\[ \inf_{x \in S} | B (x ; r) \cap S | \geq \delta_S \omega_d r^d .\]
\end{lemma}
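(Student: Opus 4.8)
\emph{Proof plan.} The plan is to use convexity together with positivity of $|S|$ to find a fixed ball sitting inside $S$, and then, from an arbitrary $x\in S$, to ``cone off'' towards this ball: the convex hull of $\{x\}$ with the fixed ball lies in $S$, and a suitably shrunk ball placed along the segment from $x$ towards the centre of the fixed ball gives a piece of $S$ inside $B(x;r)$ whose volume is comparable to $\omega_d r^d$, uniformly in $x$ and $r$.

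\emph{Step 1: a fixed ball in $S$.} A convex subset of $\R^d$ with empty interior lies in an affine hyperplane and so is Lebesgue-null; since $|S|>0$ this forces $\mathrm{int}(S)\neq\emptyset$, so I may fix $x_0\in S$ and $\epsilon_0>0$ with $B(x_0;2\epsilon_0)\subseteq S$. Write $L:=\mathrm{diam}(S)$, which is finite as $S$ is compact, and note $2\epsilon_0\le L$ (indeed $B(x_0;2\epsilon_0)$ contains points at distance $4\epsilon_0$).

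\emph{Step 2: the geometric core.} For $x\in S$ and $t\in[0,1]$ put $x_t:=(1-t)x+tx_0$; the claim is that $B(x_t;2t\epsilon_0)\subseteq S$ for every such $t$. To see this, given $t>0$ and $p$ with $\|p-x_t\|<2t\epsilon_0$, set $w:=x_0+t^{-1}(p-x_t)$; then $\|w-x_0\|<2\epsilon_0$, so $w\in S$, while $(1-t)x+tw=p$, whence $p\in S$ by convexity (the case $t=0$ being trivial). This ``carrot'' statement — that the convex hull of $x$ and $B(x_0;2\epsilon_0)$ lies in $S$ — is the one point that needs a moment's care; everything else is bookkeeping.

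\emph{Step 3: conclusion.} Given $x\in S$ and $r\in(0,L]$, take $t^\ast:=\min\{1,\ r/(\|x-x_0\|+2\epsilon_0)\}$, so that $\|x_{t^\ast}-x\|+2t^\ast\epsilon_0=t^\ast(\|x-x_0\|+2\epsilon_0)\le r$ and hence $B(x_{t^\ast};2t^\ast\epsilon_0)\subseteq B(x;r)$. With Step 2 this gives $|B(x;r)\cap S|\ge\omega_d(2t^\ast\epsilon_0)^d$. When $t^\ast<1$, using $\|x-x_0\|\le L$ and $2\epsilon_0\le L$ one has $t^\ast\ge r/(2L)$, so $2t^\ast\epsilon_0\ge\epsilon_0 r/L$; when $t^\ast=1$, $B(x_0;2\epsilon_0)\subseteq B(x;r)\cap S$ and $r\le L$ give $|B(x;r)\cap S|\ge\omega_d(2\epsilon_0)^d\ge\omega_d r^d(2\epsilon_0/L)^d$. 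In either case $|B(x;r)\cap S|\ge\delta_S\,\omega_d r^d$ with $\delta_S:=(\epsilon_0/L)^d$ (the bound being trivial at $r=0$); taking $\inf_{x\in S}$ completes the proof.
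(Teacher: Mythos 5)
Your proof is correct, and it takes a somewhat different route from the paper's. Both arguments start from the same seed — convexity plus positive measure gives a fixed ball $B(x_0;2\epsilon_0)\subseteq S$, and the convex hull of $\{x\}$ with that ball lies in $S$ — but the implementations diverge. The paper works with the tangent cone from $x$ towards $x_0$ and lower-bounds the volume of cone segments, which forces a case split according to whether $x$ is far from, near to, or at intermediate distance from $x_0$, together with half-angle estimates. You instead inscribe an explicit ball $B(x_t;2t\epsilon_0)$ along the segment from $x$ to $x_0$ (your Step 2 ``carrot'' containment, which you verify correctly by writing each point of the small ball as a convex combination of $x$ and a point of $B(x_0;2\epsilon_0)$), and then simply choose $t^\ast$ so that this ball sits inside $B(x;r)$. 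This buys you three things: no case analysis on $\rho(x,x_0)$ beyond the trivial $t^\ast=1$ versus $t^\ast<1$ dichotomy, no cone-segment volume or angle bookkeeping (which in the paper's terse sketch is the delicate point, since the relevant cone must be the one tangent to the \emph{fixed} ball of radius comparable to $r_0$, not to a ball of radius $r$), and an explicit constant $\delta_S=(\epsilon_0/\mathrm{diam}(S))^d$. The minor imprecision in your Step 1 (the open ball contains points at distance arbitrarily close to, rather than equal to, $4\epsilon_0$) is harmless, since all you use is $2\epsilon_0\le\mathrm{diam}(S)$.
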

\begin{proof}
Since $S$ is convex, compact, and of positive measure, there exist $x_0 \in S$ and $r_0 >0$ such that $B (x_0; r_0)$ is contained in the interior of $S$.
It suffices to suppose that either (i) $\rho ( x, x_0) \geq 2 r_0$, or (ii) $\rho (x, x_0 ) \leq r_0 / 2$.
To see this, suppose that $r_0 /2 < \rho ( x, x_0) < 2 r_0$. Then  we may carry out the argument for case (i) after having replaced $r_0$ by $r_0/4$,
introducing only a constant  multiplicative factor into the argument.

So now suppose that (i) holds.
For $r \leq r_0$,
let $C (x , r)$
denote the cone with apex $x$, axis passing through $x_0$, and half-angle $\theta (x, r) = \sin^{-1} ( r  / \rho ( x , x_0) )$.
 Since $\rho ( x , x_0 ) \leq \mathrm{diam} (S)$,
 $\theta (x, r) \geq \theta (r) := \sin^{-1} ( r / \mathrm{diam} (S) )$.
By construction and convexity of $S$,
$C (x , r) \cap S$ contains the cone segment $\{ y \in C (x,r) : \rho ( x, y) \leq \rho ( x, x_0 ) \cos \theta (x,r) \}$. So, if  $\rho (x, x_0 ) \geq 2r_0$,
then $B ( x ; r) \cap S$ contains
the cone segment $\{ y \in C (x , r) : \rho ( x, y) \leq r \wedge  r_0   \}$, which has volume bounded below by  $c_d \theta(r)^{d-1} r$, provided $r \leq r_0$,
where $c_d >0$ is an absolute constant. Hence $| B ( x ; r) \cap S |$ is bounded below by a constant times $r^d$, for all $r \leq r_0$.
On the other hand,
if $r \in (r_0, \mathrm{diam} (S) )$ we may use the lower bound $c_d \theta (r_0)^{d-1} r_0 \geq c'_d (r_0 / \mathrm{diam} (S))^d r^d$ for $c'_d >0$ not depending on $r$.
So again $| B ( x ; r) \cap S |$ is bounded below by a constant times $r^d$.

Finally, in case (ii), we have that $B ( x ; r) \cap S$ contains the ball $B (x ; r \wedge (r_0 /2))$, and a similar argument to that for part (i) completes the proof.
\end{proof}

We next give some basic results
on nearest-neighbour distances.
For $n \in \N$,
let
\[ Z_n := \rho ( X_n ; \cX_{n-1} ) := \min_{0 \leq i \leq n-1} \rho (X_n , X_i) = \rho ( X_n, X_{\eta_1(n)} ),  \]
  the distance
from $X_n$ to its on-line nearest neighbour. Write $x^+ := x \1 \{ x > 0 \}$.

\begin{lemma}
\label{nnlem}
Let $\delta_S >0$ be the constant in Lemma \ref{balls}.
\begin{itemize}
\item[(i)] Suppose that $\inf_{x \in S} f(x) = \lambda_0 >0$. Then for $r>0$,
\begin{equation}
\label{nntail}
 \Pr [ Z_n \geq r ] \leq ( 1 - \delta_S \lambda_0 \omega_d r^d )^n \1 \{ r \leq \mathrm{diam} (S) \} .\end{equation}
 \item[(ii)]
  Suppose that $\sup_{x \in S} f(x) = \lambda_1 < \infty$.  Then for $r>0$,
\begin{equation}
\label{nntail2}
 \Pr [ Z_n \geq r ] \geq ( ( 1 - \lambda_1 \omega_d r^d )^+ )^n \1 \{ r \leq \mathrm{diam} (S) \} .\end{equation}
 \end{itemize}
\end{lemma}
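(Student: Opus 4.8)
The plan is to observe that both bounds are immediate consequences of the elementary fact that $Z_n \geq r$ holds if and only if the ball $B(X_n; r)$ contains none of the points $X_0, \ldots, X_{n-1}$, combined with the independence of $X_0, \ldots, X_n$ and the volume estimate of Lemma \ref{balls}. First I would condition on $X_n = x$. Given $X_n = x$, the event $\{Z_n \geq r\}$ is the event that $X_i \notin B(x;r)$ for all $i \in \{0,\ldots,n-1\}$; since the $X_i$ are i.i.d.\ with density $f$ and independent of $X_n$, this conditional probability is exactly $(1 - \mu_f(B(x;r) \cap S))^n$, where $\mu_f$ denotes the probability measure with density $f$. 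Note also that $Z_n \leq \mathrm{diam}(S)$ always (both $X_n$ and its nearest neighbour lie in $S$), which accounts for the indicator factors $\1\{r \leq \mathrm{diam}(S)\}$; for $r > \mathrm{diam}(S)$ both right-hand sides are zero and the inequalities are trivial in the directions claimed — so I would note that and restrict attention to $r \leq \mathrm{diam}(S)$.

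For part (i): when $\inf_{x\in S} f(x) = \lambda_0 > 0$ we have $\mu_f(B(x;r) \cap S) \geq \lambda_0 |B(x;r) \cap S| \geq \lambda_0 \delta_S \omega_d r^d$ by Lemma \ref{balls}, uniformly in $x \in S$. Hence the conditional probability $(1 - \mu_f(B(x;r)\cap S))^n \leq (1 - \delta_S \lambda_0 \omega_d r^d)^n$ whenever the base is nonnegative; if $\delta_S \lambda_0 \omega_d r^d \geq 1$ the claimed bound is vacuous (the right side is $\leq 0$, but actually one should read it as a bound that only bites when positive — I would simply note the inequality $\Pr[Z_n \geq r] \leq \max\{0, 1 - \delta_S\lambda_0\omega_d r^d\}^n$ holds and the stated form follows). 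Taking expectation over $X_n$ (which only ranges over $S$) gives \eqref{nntail}. For part (ii): when $\sup_{x\in S} f(x) = \lambda_1 < \infty$ we have $\mu_f(B(x;r)\cap S) \leq \lambda_1 |B(x;r)| = \lambda_1 \omega_d r^d$ for every $x$, so the conditional probability is at least $(1 - \lambda_1 \omega_d r^d)^n$ when $\lambda_1 \omega_d r^d \leq 1$, and trivially at least $0 = ((1-\lambda_1\omega_d r^d)^+)^n$ otherwise; averaging over $X_n \in S$ yields \eqref{nntail2}.

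There is no substantive obstacle here: the only points requiring a little care are (a) handling the regimes where the nominal bounds become negative, which is why the statement is phrased with $x^+$ in part (ii) and why the indicator $\1\{r \leq \mathrm{diam}(S)\}$ appears, and (b) making sure the volume comparison in part (i) uses $|B(x;r)\cap S|$ (the relevant quantity, bounded below by Lemma \ref{balls}) while part (ii) can afford the cruder $|B(x;r)|$ upper bound. The "hardest" step — really a bookkeeping step — is just quoting Lemma \ref{balls} correctly for the lower volume bound in (i); everything else is the standard "empty ball" computation for nearest-neighbour distances together with Fubini to integrate out $X_n$.
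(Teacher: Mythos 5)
Your proposal is correct and follows essentially the same route as the paper: condition on $X_n$, use the i.i.d.\ empty-ball identity $\Pr[Z_n \geq r \mid X_n] = \bigl(1 - \int_{S \cap B(X_n;r)} f(x)\,\ud x\bigr)^n$, bound the volume $|S \cap B(X_n;r)|$ below via Lemma \ref{balls} and above by $\omega_d r^d$, and integrate out $X_n$. The only difference is your extra care about the base possibly being negative in part (i), which is in fact moot since $\delta_S \lambda_0 \omega_d r^d \leq \lambda_0 |S \cap B(x;r)| \leq \int_{S \cap B(x;r)} f \leq 1$ for $r \leq \mathrm{diam}(S)$.
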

\begin{proof}
Conditional on $X_n$, we have, for any $r >0$, a.s.,
\begin{align}
\label{nn0}
\Pr [ Z_n \geq r \mid X_n ] & = \Pr [ S \cap B( X_n ; r) \cap \cX_{n-1}   = \emptyset \mid X_n ] \nonumber\\
& = \left( 1 - \int_{S \cap B( X_n ; r)} f (x) \ud x \right)^n . \end{align}
Note that   $\Pr [ Z_n > \mathrm{diam} (S) ] = 0$, so it suffices to suppose that $r \leq \mathrm{diam} (S)$.
Using Lemma \ref{balls} we have that, for $\delta_S > 0$,
\begin{equation}
\label{vol0}
 \delta_S \omega_d r^d  \leq
  | S \cap B (X_n ; r ) | \leq \omega_d r^d , \as , \end{equation}
  for all $r \leq \mathrm{diam} (S)$.
  It   follows from \eqref{nn0}  that, if $\inf_{x \in S} f(x) = \lambda_0 >0$,
\[
\Pr [ Z_n \geq r \mid X_n ] \leq \left( 1 - \lambda_0 | S \cap B( X_n ; r) |  \right)^n  ,\]
which, with  the first inequality in \eqref{vol0}, gives part (i).
Under the condition $\sup_{x \in S} f(x) = \lambda_1 < \infty$, we obtain part (ii) similarly from \eqref{nn0} with the second inequality
in \eqref{vol0}.
\end{proof}

Next we state a simple but useful result on degrees in our graphs.

\begin{lemma}
\label{deglem}
In either the GPA graph or the ONG, writing $N_n(k)$ for $N_n^\mathrm{GPA} (k)$ or $N_n^\mathrm{ONG} (k)$
as appropriate, we have that for any $k  \in \N$ and any $n$,
\[ N_n (k) \leq 2n /k , \as \]
\end{lemma}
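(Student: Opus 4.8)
The plan is to prove the bound deterministically, for every realisation of the points, which immediately yields the almost-sure statement. The argument has three short steps.

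First I would count edges. In both models the graph on vertex set $\{0, 1, \ldots, n\}$ is built by adding, for each new vertex, exactly one edge joining it to an earlier vertex: in the GPA graph this is the edge $(i, v_i)$ with $v_i \in V_{i-1}$ for $i \in \{2, \ldots, n\}$, together with the single edge of $G_1$; in the ONG it is $(i, \eta_1(i))$ with $\eta_1(i) \in \{0, \ldots, i-1\}$ for $i \in \{1, \ldots, n\}$. Since each such edge joins its larger endpoint to a strictly smaller one, the $n$ edges so added are pairwise distinct and none is a self-loop, so the graph, viewed as an undirected simple graph, has exactly $n$ edges.

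Second, by the handshaking lemma, $\sum_{v=0}^n \deg_n(v) = 2n$. Third, a deterministic Markov-type inequality finishes the proof: every vertex counted by $N_n(k)$ has degree at least $k$, so
\[ k \, N_n(k) \le \sum_{v \,:\, \deg_n(v) \ge k} \deg_n(v) \le \sum_{v=0}^n \deg_n(v) = 2n , \]
and hence $N_n(k) \le 2n/k$. I do not expect any genuine obstacle here; the only point requiring a word of care is the edge count in the first step, where one must check that the $n$ edges added during the construction are genuinely distinct — which holds because each joins a vertex to a strictly earlier one, so no two can coincide and no multi-edges or loops arise, making the count $\#E_n = n$ exact rather than merely an upper bound.
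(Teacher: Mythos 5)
Your proof is correct and takes essentially the same route as the paper: both rest on the observation that the graph has exactly $n$ edges on $n+1$ vertices, so the degree sum is $2n$, followed by a deterministic Markov-type inequality (the paper phrases it by summing $N_n(k)$ over $k$ and truncating, you by summing degrees over the high-degree vertices, which is the same estimate). No issues.
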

\begin{proof}
This is basically Markov's inequality.
The property of the graphs that we use is simply that on $n+1$ vertices there are $n$ edges present, and all vertices have degree at least $1$.
By the degree sum formula,
\[ 2 n = \sum_{i=0}^n \deg_n (i) = \sum_{i=0}^n \sum_{k \geq 1} \1 \{ \deg_n (i) \geq k \} = \sum_{k \geq 1} N_n (k) ,\]
interchanging the order of summation. So for any $k_0 \in \N$,
\[ 2n \geq \sum_{k =1} ^{k_0} N_n (k) \geq k_0 N_n (k_0) ,\]
since $N_n (k)$ is nonincreasing in $k$.
\end{proof}

\section{Proofs for strong geometric regime}
\label{sec:proofs1}

In this section we give the proofs of our results from Section \ref{sec:strong}.
We start by outlining the idea behind the proof of Theorem \ref{thm1}.
The core of the argument is to show that $X_n$ is joined to its on-line nearest neighbour with probability $1 - o(1)$ (cf Lemma \ref{lem4} below).
By \eqref{rule}, the probability that $X_n$ is joined to its on-line nearest neighbour $X_{\eta_1(n)}$ satisfies
\[ \Pr [ v_n = \eta_1 (n) \mid G_{n-1} , \cX_n ] = \frac{ \deg_{n-1} (\eta_1 (n))  F ( Z_n )}{D_{n-1} (X_n) }   .\]
For $F = F_\gamma$,   the fact that  $F_\gamma$ is decreasing and the crude bound $\deg_{n-1} (i) \leq n$ give
\[ D_{n-1} (X_n ) = \sum_{i=0}^{n-1} \deg_{n-1} (i) F_\gamma ( \rho( X_i, X_n ) ) 
\leq n^2 F_\gamma ( W_n) + \deg_{n-1} (\eta_1 (n))  F_\gamma ( Z_n ) ,\]
where   $W_n$ is the distance from $X_n$ to its {\em second} nearest neighbour among $\cX_{n-1}$, so
\[ \Pr [ v_n = \eta_1 (n) \mid G_{n-1} , \cX_n ] \geq 1 - \frac{n^2 F_\gamma(W_n)}{F_\gamma(Z_n)} .\]
With probability $1-o(1)$, $W_n > Z_n + \theta_n$ where $\theta_n = o(n^{-1/d})$, so to show $\Pr [ v_n = \eta_1 (n)  ] = 1 - o(1)$
it suffices to show that,
\[ \frac{n^2 F_\gamma (Z_n + \theta_n )}{F_\gamma (Z_n)} \to 0,\]
in probability, as $n \to \infty$. A computation using Taylor's formula shows that this holds provided $\gamma >2$. To improve on this argument
we need (i) to   control  the degrees of the vertices, and (ii) to control the number of `plausible alternatives' for $v_n$.

For $\nu \in (0,1)$ and $n \geq 2$ set
$\beta (n, \nu) := n^{-1/d} \exp \{ ( \log n )^\nu \}$, and let
\[ E (n, \nu) := \{ \rho ( X_{v_n} , X_n ) \geq \beta (n , \nu) \} ,\]
 the event that the edge from vertex $n$
connects to any vertex  
outside $B ( X_n ; \beta (n ,\nu) )$.

\begin{lemma}
\label{lem1}
Suppose that $F  = F_\gamma$ for some $\gamma >1$ and that $\nu \in (0,1)$ with $\nu > 2-\gamma$.
Suppose that $\inf_{x \in S} f(x)   >0$.
Then for any $p < \infty$, as $n \to \infty$,
\[  \Pr [ E (n, \nu ) ] =  O ( \exp \{ - \gamma d^{1-\gamma} (1 + o(1) ) (\log n)^{\gamma + \nu -1} \} ) = O( n^{-p} ). \]
\end{lemma}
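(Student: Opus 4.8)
The plan is to estimate $\Pr[E(n,\nu)\mid G_{n-1},\cX_n]$ directly from the attachment rule~\eqref{rule}, bounding the numerator by monotonicity of $F_\gamma$ and the denominator by the single point $X_{\eta_1(n)}$ near $X_n$; unlike the main argument sketched above, this crude estimate needs no control of individual degrees, only of their sum. Since the event $E(n,\nu)$ forces $v_n\notin B(X_n;\beta(n,\nu))$,
\[ \Pr[E(n,\nu)\mid G_{n-1},\cX_n]=\frac{1}{D_{n-1}(X_n)}\sum_{v\in V_{n-1}\,:\,\rho(X_v,X_n)\geq\beta(n,\nu)}\deg_{n-1}(v)\,F_\gamma(\rho(X_v,X_n)). \]
As $F_\gamma$ is non-increasing, each numerator summand is at most $F_\gamma(\beta(n,\nu))\deg_{n-1}(v)$, so the numerator is at most $F_\gamma(\beta(n,\nu))\sum_{v\in V_{n-1}}\deg_{n-1}(v)=2(n-1)F_\gamma(\beta(n,\nu))$ (the graph $G_{n-1}$ has $n-1$ edges), whereas $D_{n-1}(X_n)\geq\deg_{n-1}(\eta_1(n))F_\gamma(Z_n)\geq F_\gamma(Z_n)$. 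Together with the trivial bound $1$ this gives
\[ \Pr[E(n,\nu)\mid G_{n-1},\cX_n]\leq 1\wedge\frac{2(n-1)F_\gamma(\beta(n,\nu))}{F_\gamma(Z_n)}. \]

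Next I would insert an intermediate length scale $\alpha_n:=n^{-1/d}(\log n)^{\gamma/d}$; since $\nu\in(0,1)$, for all large $n$ one has $0<\alpha_n<\beta(n,\nu)<1$ and $\alpha_n<\mathrm{diam}(S)$. On $\{Z_n<\alpha_n\}$ monotonicity gives $F_\gamma(Z_n)\geq F_\gamma(\alpha_n)$, and on $\{Z_n\geq\alpha_n\}$ I use the bound $1$; taking expectations and applying Lemma~\ref{nnlem}(i) with $r=\alpha_n$ (using $\lambda_0:=\inf_{x\in S}f(x)>0$), together with $(1-x)^n\leq\re^{-nx}$, yields
\[ \Pr[E(n,\nu)]\leq\Pr[Z_n\geq\alpha_n]+\frac{2(n-1)F_\gamma(\beta(n,\nu))}{F_\gamma(\alpha_n)}\leq\re^{-\delta_S\lambda_0\omega_d(\log n)^\gamma}+\frac{2(n-1)F_\gamma(\beta(n,\nu))}{F_\gamma(\alpha_n)}, \]
because $n\alpha_n^d=(\log n)^\gamma$. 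Since $\gamma>\gamma+\nu-1$, the first term is negligible against the asserted bound, so only the second term matters.

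Writing $L:=\log n$, we have $\log(1/\beta(n,\nu))=L/d-L^\nu$ and $\log(1/\alpha_n)=L/d-(\gamma/d)\log L$, so Taylor-expanding $t\mapsto t^\gamma$ about $L/d$ gives
\[ \log\frac{F_\gamma(\beta(n,\nu))}{F_\gamma(\alpha_n)}=\bigl(\log(1/\beta(n,\nu))\bigr)^\gamma-\bigl(\log(1/\alpha_n)\bigr)^\gamma=-\gamma d^{1-\gamma}L^{\gamma+\nu-1}+O\bigl(L^{\gamma-1}\log L\bigr)+O\bigl(L^{\gamma+2\nu-2}\bigr), \]
and since $0<\nu<1$ both error terms are $o(L^{\gamma+\nu-1})$. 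Multiplying through by $2(n-1)=\exp\{L+O(1)\}$ changes the exponent only by $O(L)=o(L^{\gamma+\nu-1})$, precisely because $\gamma+\nu-1>1$ — which is exactly the hypothesis $\nu>2-\gamma$. This gives $\Pr[E(n,\nu)]=O(\exp\{-\gamma d^{1-\gamma}(1+o(1))(\log n)^{\gamma+\nu-1}\})$, and the bound $O(n^{-p})$ for every fixed $p<\infty$ follows since $(\log n)^{\gamma+\nu-1}/\log n=(\log n)^{\gamma+\nu-2}\to\infty$. The only delicate point is the calibration of $\alpha_n$: it must be large enough that $\Pr[Z_n\geq\alpha_n]$ drops below the target, yet small enough that $F_\gamma(\alpha_n)$ stays close to $F_\gamma(n^{-1/d})$ so that the ratio $F_\gamma(\beta(n,\nu))/F_\gamma(\alpha_n)$ retains the sharp constant $\gamma d^{1-\gamma}$; the slack needed for such a choice comes entirely from $\nu<1$, and everything else is a routine conditioning computation.
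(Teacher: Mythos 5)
Your argument is correct and follows essentially the same route as the paper's proof: split according to whether $Z_n$ exceeds an intermediate scale, control that event via Lemma \ref{nnlem}(i), bound the conditional probability by a ratio of the form $(\text{polynomial in } n)\,F_\gamma(\beta(n,\nu))/F_\gamma(\text{inner scale})$, and extract the constant $\gamma d^{1-\gamma}$ by Taylor expansion, with the hypothesis $\nu>2-\gamma$ entering exactly where the polynomial factor must be absorbed. The only differences are cosmetic: you use the degree-sum bound $2(n-1)$ where the paper uses the cruder $n^2$, and your inner scale $\alpha_n=n^{-1/d}(\log n)^{\gamma/d}$ replaces the paper's $\beta(n,\nu')$ with $\nu'\in(0,\nu)$ — both choices work and neither changes the conclusion.
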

\begin{proof}
Note that
for any $\nu \in (0,1)$,
\begin{align}
\label{eq2}
 F_\gamma (\beta (n , \nu)  ) & = \exp \left\{ \left( d^{-1} \log n - (\log n)^\nu  \right)^\gamma \right \} \nonumber\\
 & = \exp \left\{ d^{-\gamma} (\log n)^\gamma - \gamma d^{1-\gamma} (1+o(1)) (\log n)^{\gamma + \nu -1}
 \right\} .
 \end{align}

Given $\inf_{x \in S} f(x) = \lambda_0  >0$,  we obtain from \eqref{nntail} that
\begin{equation}
\label{eq3}
 \Pr [ Z_n > \beta(n,\nu)  ] = O ( \exp \{ - \delta_S \lambda_0 \omega_d \exp \{ d (\log n)^\nu \}   \} )
= O (  \exp \{ - (\log n)^K \} ) ,\end{equation}
for any $K < \infty$,
since $\exp \{ (\log n)^\nu \}$ grows faster than any power of $\log n$.

Fix $\nu \in (0,1)$ and choose $\nu' \in (0,\nu)$.
Then
\begin{equation}
\label{eq4}
 \Pr [ E (n,\nu) ] \leq \Pr [ Z_n > \beta (n ,\nu')  ]
+ \Pr [ E (n,\nu) \mid Z_n \leq \beta (n, \nu')  ] .\end{equation}
Suppose that $Z_n \leq \beta (n, \nu')$.
Then, if the  nearest neighbour of $X_n$
among $\cX_{n-1}$ is $X_{\eta_1(n)}$,
since $F_\gamma (r)$ is nonincreasing in $r >0$,
\[ \deg_{n-1} ( \eta_1 (n) ) F_\gamma ( \rho ( X_{\eta_1(n)}, X_n ) ) \geq F_\gamma (Z_n) \geq
F_\gamma ( \beta (n, \nu') ) ,\]
so that $D_{n-1}(X_n) \geq F_\gamma (\beta (n, \nu'))$, given $Z_n \leq \beta (n, \nu')$.
On the other hand, any vertex $j < n$ with $X_j \notin B( X_n ; \beta (n, \nu) )$
has
\[ \deg_{n-1} (j) F_\gamma ( \rho (X_j, X_n ) ) \leq n F_\gamma (\beta (n,\nu) ) ,\]
using the crude bound $\deg_{n-1} (j) \leq n$.
Hence, by \eqref{rule} and \eqref{eq2},
\begin{align}
\label{eq5}
&  \Pr [ E (n,\nu) \mid Z_n \leq \beta (n, \nu') ]  = \sum_{j = 0}^{n-1} \Pr [ \{ v_n = j \}  \cap E (n , \nu) \mid Z_n \leq \beta (n, \nu') ] \nonumber\\
 & {}~~  \leq
\frac{n^2 F_\gamma (\beta ( n, \nu) )}{F_\gamma ( \beta(n , \nu') )} \nonumber\\
 & {}~~  = O \left(  \exp \left\{ 2 \log n - \gamma d^{1-\gamma} (1+o(1)) \left( (\log n)^{\gamma + \nu -1}
 - (\log n)^{\gamma + \nu' -1} \right) \right\} \right) \nonumber\\
 & {}~~  = O \left(  \exp \left\{ - \gamma d^{1-\gamma} (1+o(1))  (\log n)^{\gamma + \nu -1}
   \right\} \right)
  ,\end{align}
provided that $\gamma + \nu -1 > 1$, i.e., $\nu > 2 -\gamma$,
which we can  ensure by choosing $\nu \in (0,1)$ close enough to $1$ since $\gamma >1$.
The result now follows from \eqref{eq3}, \eqref{eq4} and \eqref{eq5}.
\end{proof}

The next result   is a bound on degrees that amounts to  Theorem \ref{thm1}(i), and which
will also be an ingredient in our proof of Theorem \ref{thm1}(ii).

\begin{lemma}
\label{lem2}
Suppose that \eqref{fcon} holds and   $F = F_\gamma$ for some $\gamma >1$.
Then for any $\nu \in (0,1)$
with $\nu > 2-\gamma$,   a.s., for all but finitely many $n \in\N$, \eqref{gpadeg} holds.
\end{lemma}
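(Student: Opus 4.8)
The plan is to show that the degree of every vertex is, up to an almost-surely finite additive correction, controlled by a purely geometric count: the number of later points of the sequence landing within a slightly inflated on-line nearest-neighbour radius of the vertex. Fix $\nu \in (0,1)$ with $\nu > 2-\gamma$ and choose an auxiliary exponent $\nu' \in (\max\{0, 2-\gamma\}, \nu)$; this interval is nonempty precisely because $\nu > 2-\gamma$. Applying Lemma \ref{lem1} with $\nu'$ in place of $\nu$ (permissible since $\nu' \in (0,1)$ and $\nu' > 2-\gamma$) gives $\Pr[E(n,\nu')] = O(n^{-2})$, hence $\sum_n \Pr[E(n,\nu')] < \infty$; by Borel--Cantelli the quantity $C_0 := \sum_{n \geq 2} \1\{ E(n,\nu') \}$ is finite almost surely.

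The key deterministic observation is the following. Every edge incident to vertex $i$ in $G_n$ is either the out-edge of $i$ or the initial edge $(1,0)$ --- at most $2$ of these --- or an in-edge $(j,i)$ with $i < j \leq n$ and $v_j = i$. For such an in-edge, either $E(j,\nu')$ holds, and there are at most $C_0$ values of $j$ for which $E(j,\nu')$ holds at all, or $E(j,\nu')^c$ holds, in which case $\rho(X_i, X_j) = \rho(X_{v_j}, X_j) < \beta(j,\nu')$, i.e.\ $X_j \in B(X_i ; \beta(j,\nu'))$. Hence, writing $M_n(i) := \#\{ j : i < j \leq n, \, X_j \in B(X_i ; \beta(j,\nu')) \}$, we obtain $\deg_n(i) \leq 2 + C_0 + M_n(i)$ for all $i$ and $n$, and it remains only to show that almost surely $\max_{0 \leq i \leq n} M_n(i) \leq \tfrac12 \exp\{(\log n)^\nu\}$ for all large $n$.

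Here the point is that $M_n(i)$ depends only on the i.i.d.\ points, not on the attachment dynamics. Conditionally on $X_i$, it is a sum of the independent indicators $\1\{ X_j \in B(X_i ; \beta(j,\nu')) \}$, $j = i+1, \dots, n$, each of conditional mean at most $\lambda_1 \omega_d \beta(j,\nu')^d$ with $\lambda_1 := \sup_S f < \infty$ by \eqref{fcon}; so its conditional mean is at most
\[ \mu_n := \lambda_1 \omega_d \sum_{j=2}^n \beta(j,\nu')^d = \lambda_1 \omega_d \sum_{j=2}^n j^{-1} \exp\{ d (\log j)^{\nu'} \} = O\big( (\log n) \exp\{ d (\log n)^{\nu'} \} \big) , \]
bounding each summand by $j^{-1} \exp\{ d (\log n)^{\nu'} \}$ and using $\sum_{j=2}^n j^{-1} = O(\log n)$. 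Since $\nu' < \nu$, $\log \mu_n = d (\log n)^{\nu'} + O(\log\log n) = o((\log n)^\nu)$, hence $\mu_n = o(\exp\{(\log n)^\nu\})$. An exponential Markov (Chernoff) bound gives $\Pr[ M_n(i) \geq t ] \leq \exp\{ (\re-1)\mu_n - t \}$ for all $t > 0$, uniformly in $i$; with $t = t_n := \tfrac12 \exp\{(\log n)^\nu\}$ and $\mu_n = o(t_n)$ this yields $\Pr[ M_n(i) \geq t_n ] \leq \exp\{ -t_n/2 \}$ for all large $n$. A union bound over $i \in \{0,\dots,n\}$ then gives $\Pr[ \max_{0 \leq i \leq n} M_n(i) \geq t_n ] \leq (n+1)\exp\{-t_n/2\}$, which is summable in $n$ because $t_n / \log n \to \infty$; by Borel--Cantelli, $\max_{0 \leq i \leq n} M_n(i) < t_n$ almost surely for all but finitely many $n$.

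Combining the last two paragraphs, almost surely, for all but finitely many $n$, $\max_{0 \leq i \leq n} \deg_n(i) \leq 2 + C_0 + \tfrac12 \exp\{(\log n)^\nu\}$, which is at most $\exp\{(\log n)^\nu\}$ once $n$ exceeds the almost-surely finite (realisation-dependent) threshold beyond which $\exp\{(\log n)^\nu\} \geq 2(2+C_0)$; this is \eqref{gpadeg}. The step I expect to require the most care is the balancing act behind the choice of $\nu'$: one needs $\nu' > 2-\gamma$ for Lemma \ref{lem1} to apply, so that the rare long-range attachments contribute only the finite total $C_0$, while one simultaneously needs $\nu' < \nu$ so that $\mu_n$ --- which governs the mean of $M_n(i)$ and is of order $\exp\{ d (\log n)^{\nu'} \}$ up to a logarithmic factor, and would overshoot the target $\exp\{(\log n)^\nu\}$ if one naively took $\nu' = \nu$ with $d \geq 2$ --- stays safely $o(\exp\{(\log n)^\nu\})$. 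The availability of such a $\nu'$ is exactly the content of the hypothesis $\nu > 2-\gamma$; the remaining ingredients (the elementary estimate for $\mu_n$ and the Chernoff and Borel--Cantelli steps) are routine.
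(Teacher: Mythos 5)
Your proof is correct and follows essentially the same route as the paper's: Lemma \ref{lem1} plus Borel--Cantelli to discard the a.s.\ finitely many long-range attachments, then a reduction of each degree to a purely geometric count of later points falling in inflated balls around $X_i$, whose conditional mean is estimated via $\sum_j \beta(j,\cdot)^d$ and which is concentrated by an exponential tail bound, finishing with a union bound and a second application of Borel--Cantelli. The only cosmetic differences are that you use an elementary Chernoff bound where the paper invokes Talagrand's inequality (an upper tail bound is indeed all that is needed), and you fix the auxiliary exponent $\nu'\in(2-\gamma,\nu)$ explicitly at the outset, whereas the paper runs the argument at $\nu$ with threshold $\exp\{2d(\log n)^\nu\}$ and converts to \eqref{gpadeg} by noting at the end that $\nu>2-\gamma$ was arbitrary.
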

\begin{proof}
Let  $\lambda_0 = \inf_{x \in S} f(x)$ and $\lambda_1 = \sup_{x \in S} f(x)$; given \eqref{fcon}, $0 < \lambda_0 \leq \lambda_1 < \infty$.
Let $\gamma >1$ and $\nu > 2-\gamma$.
By Lemma \ref{lem1},
$\Pr [ E ( j, \nu) ] = O( j^{-2} )$.
Hence, by the Borel--Cantelli lemma, for only finitely many $j \in \N$ does the vertex $j$
connect to a vertex $i < j$ with $\rho(X_i, X_j ) \geq \beta (j,\nu)$.
It follows that there exists some finite random variable $D_\nu = 1 + \sum_{j=1}^\infty \1 ( E (j,\nu))$ such that, for all $n \in \N$
and all $i \in \{0,1,\ldots,n\}$,
\begin{align*}
\deg_n (i) \leq D_\nu + \sum_{j=i+1}^n \xi_{i,j} ,\end{align*}
where we set $\xi_{i,j} := \1 \{ \rho (X_j, X_i ) \leq \beta (j,\nu) \}$ for $i \neq j$ and $\xi_{i,i} := 0$.
Hence
\begin{align}
\label{maxdeg}
\max_{0 \leq i \leq n} \deg_n (i) \leq D_\nu + \max_{0 \leq i \leq n} \sum_{j=1}^n \xi_{i,j} .\end{align}
For   fixed $i$, conditional on $X_i$,
 the $n-1$ terms $\xi_{i,j}$ with $j \neq i$
  in the sum on the right-hand side of \eqref{maxdeg}
 are independent and $\{0,1\}$-valued,
and an appropriate version
of Talagrand's inequality (see e.g.\ \cite[p.\ 81]{mr}) will show that their sum is  concentrated around its mean
(in fact, we only need an upper bound here).
Specifically, we have for $n \in \N$,
\begin{align}
\label{tal1} \Exp  \sum_{j=1}^n \xi_{i,j} & = \sum_{j=1}^n
\Pr [ X_i \in B ( X_j ; \beta (j,\nu) ) ]   = \Theta \left( \sum_{j=1}^n \beta (j,\nu)^d \right)   ,\end{align}
uniformly for $i \in \{1,\ldots,n\}$, where the implicit constants depend on $S$, $\lambda_0$ and $\lambda_1$ (we use Lemma \ref{balls} here).
We claim that
\begin{equation}
\label{sumbeta}
\sum_{j=1}^n \beta (j,\nu)^d = \exp \{ d (\log n)^\nu ( 1+ o(1) ) \} . \end{equation}
To verify \eqref{sumbeta}, we combine the upper bound given by
\[ \sum_{j=1}^n \beta (j,\nu)^d \leq \sum_{j=1}^n \frac{1}{j} \exp \{ d (\log n)^\nu \}   \leq (1 + \log n) \exp \{ d (\log n)^\nu \} ,\]
with the lower bound given by
\[ \sum_{j=1}^n \beta (j,\nu)^d \geq \sum_{j = \lceil n/2 \rceil}^n \frac{1}{n} \exp \{ d (\log (n/2) )^\nu \} \geq \frac{1}{2} \exp \{ d (\log (n/2) )^\nu \}  ,\]
since the last sum contains $n +1 - \lceil n/2 \rceil \geq n/2$ terms.
From \eqref{tal1} and \eqref{sumbeta}, we have $\Exp \sum_{j=1}^n \xi_{i,j} = \exp \{ d (\log n)^\nu ( 1+ o(1) ) \}$.
  Talagrand's inequality implies that for   all $n$,
\begin{align*}
\max_{0 \leq i \leq n}
\Pr \left[  \sum_{j=1}^n \xi_{i,j} >   \exp \{ 2 d (\log n)^\nu \} \right] &
\leq O ( \exp \{ - \re^{  d    (\log n)^\nu  } \} ) ,\end{align*}
which is $O (n^{-3} )$,
say, so that Boole's inequality yields
\[ \Pr \left[  \max_{0 \leq i \leq n} \sum_{j=1}^n \xi_{i,j} >   \exp \{ 2 d (\log n)^\nu \} \right] = O (n^{-2} ) .\]
Now another application of the Borel--Cantelli lemma together with \eqref{maxdeg} completes the proof of the lemma,
noting that $\nu > 2-\gamma$ was arbitrary.
\end{proof}

The main step remaining in the proof of Theorem \ref{thm1} is the following.

\begin{lemma}
\label{lem4}
Suppose that \eqref{fcon} holds and  $F  = F_\gamma $ for some $\gamma > 3/2$.
Then $\Pr [ v_n \neq \eta_1 (n) ] \to 0$  as $n \to \infty$.
\end{lemma}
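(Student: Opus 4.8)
The plan is to follow the heuristic outlined before Lemma \ref{lem1}, but sharpened in the two places the authors flag: controlling vertex degrees (using Lemma \ref{lem2}) and controlling the number of ``plausible alternatives'' to the on-line nearest neighbour $X_{\eta_1(n)}$. Fix $\nu \in (0,1)$ with $\nu > 2 - \gamma$; since $\gamma > 3/2$ we may also take $\nu < 1/2$ if convenient, and in any case $\gamma + \nu - 1 > 1$. By Lemma \ref{lem1}, $\Pr[E(n,\nu)] \to 0$, so we may work on the complement $E(n,\nu)^c$, on which $v_n$ connects inside $B(X_n; \beta(n,\nu))$. On this event the only candidates for $v_n$ are the (random) set of vertices $j < n$ with $X_j \in B(X_n; \beta(n,\nu))$; call this set $\mathcal{A}_n$, and note $\# \mathcal{A}_n$ is a sum of independent indicators with mean $\Theta(n \beta(n,\nu)^d) = \Theta(\exp\{d(\log n)^\nu\})$, hence (by the same Talagrand argument as in Lemma \ref{lem2}, or just Markov) $\# \mathcal{A}_n \leq \exp\{2d(\log n)^\nu\} =: M_n$ with probability $1 - o(1)$.

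Next I would introduce a good event $\mathcal{G}_n$ that also fixes: (a) $Z_n \leq \beta(n,\nu')$ for some $\nu' \in (2-\gamma, \nu)$, which holds with probability $1 - o(1)$ by \eqref{nntail}; (b) a separation between $X_n$ and its second nearest neighbour $W_n$, of the form $W_n \geq Z_n + \theta_n$ with $\theta_n = n^{-1/d}/(\log n)$, say --- this is a standard fact about nearest-neighbour gaps under \eqref{fcon} and holds with probability $1 - o(1)$; and (c) the degree bound $\max_{0 \leq i \leq n-1} \deg_{n-1}(i) \leq \exp\{(\log n)^\nu\}$ from Lemma \ref{lem2} (which holds a.s.\ eventually, so certainly with probability $1-o(1)$). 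On $\mathcal{G}_n \cap E(n,\nu)^c$, the denominator in \eqref{rule} is bounded below by $\deg_{n-1}(\eta_1(n)) F_\gamma(Z_n) \geq F_\gamma(Z_n)$, while the total weight of all vertices $j \neq \eta_1(n)$ that lie in $\mathcal{A}_n$ is at most $\# \mathcal{A}_n \cdot \big(\max_j \deg_{n-1}(j)\big) \cdot F_\gamma(Z_n + \theta_n) \leq M_n \exp\{(\log n)^\nu\} F_\gamma(Z_n + \theta_n)$ (any such $j$ has $\rho(X_j, X_n) \geq W_n \geq Z_n + \theta_n$ and $F_\gamma$ is decreasing). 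Therefore
\[
\Pr[v_n \neq \eta_1(n) \mid G_{n-1}, \cX_n] \;\leq\; M_n \exp\{(\log n)^\nu\} \, \frac{F_\gamma(Z_n + \theta_n)}{F_\gamma(Z_n)}
\]
on $\mathcal{G}_n \cap E(n,\nu)^c$.

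It remains to show the right-hand side is $o(1)$. Writing $F_\gamma(r) = \exp\{(\log(1/r))^\gamma\}$ and using $Z_n \leq \beta(n,\nu') \leq n^{-1/d} \exp\{(\log n)^{\nu'}\}$ together with Taylor's formula, one gets
\[
\log \frac{F_\gamma(Z_n)}{F_\gamma(Z_n + \theta_n)} \;=\; \big(\log(1/Z_n)\big)^\gamma - \big(\log(1/(Z_n+\theta_n))\big)^\gamma \;\geq\; \gamma \big(\log(1/Z_n)\big)^{\gamma-1} \cdot \frac{\theta_n}{2 Z_n}
\]
for $n$ large (since $\theta_n \ll Z_n$ is false in general --- here one must be careful: $\theta_n$ and $Z_n$ are comparable in order, so instead bound $\log(1/(Z_n+\theta_n)) \geq \log(1/Z_n) - \log 2$ when $\theta_n \leq Z_n$, giving a lower bound of order $(\log(1/Z_n))^{\gamma-1} \geq (d^{-1}\log n - (\log n)^{\nu'})^{\gamma-1} = \Theta((\log n)^{\gamma-1})$, which is $\gg (\log n)^{\max\{\nu, 2d(\log n)^{\nu-1}\cdot\text{const}\}}$ since $\gamma - 1 > 1/2 > \nu$). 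Thus $\log(F_\gamma(Z_n)/F_\gamma(Z_n+\theta_n)) \geq c (\log n)^{\gamma - 1}$ while $\log\big(M_n \exp\{(\log n)^\nu\}\big) = O((\log n)^{\nu}) = o((\log n)^{\gamma-1})$ because $\nu < 1/2 < \gamma - 1$. Hence the bound tends to $0$, and combining with $\Pr[E(n,\nu)] + \Pr[\mathcal{G}_n^c] = o(1)$ gives $\Pr[v_n \neq \eta_1(n)] \to 0$.

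The main obstacle is the last paragraph: making the Taylor/monotonicity estimate on $F_\gamma(Z_n+\theta_n)/F_\gamma(Z_n)$ genuinely uniform, given that $Z_n$ and the gap $\theta_n$ are of the same polynomial order $n^{-1/d}$ so one cannot treat $\theta_n/Z_n$ as small. The cleanest route is to pick the gap threshold as a constant multiple of $Z_n$ (i.e.\ demand $W_n \geq (1+\delta) Z_n$ for fixed small $\delta > 0$ on the good event --- still probability $1-o(1)$), which turns the estimate into $\log(1/(Z_n+\theta_n)) = \log(1/Z_n) - \log(1+\delta)$ exactly, yielding a clean lower bound $\gamma \log(1+\delta)(\log(1/Z_n))^{\gamma-1}(1+o(1)) = \Theta((\log n)^{\gamma-1})$ for the log-ratio; the rest is then bookkeeping. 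Everything else --- the tail bound for $Z_n$, the concentration of $\#\mathcal{A}_n$, the degree bound --- is already available (Lemmas \ref{balls}, \ref{nnlem}, \ref{lem1}, \ref{lem2}) or is a routine nearest-neighbour-gap estimate under \eqref{fcon}.
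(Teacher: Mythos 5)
Your overall architecture is the same as the paper's: restrict the candidates for $v_n$ to $B(X_n;\beta(n,\nu))$ via Lemma \ref{lem1}, bound their number by $\exp\{2d(\log n)^\nu\}$ and their degrees by $\exp\{(\log n)^\nu\}$ via Lemma \ref{lem2}, and then beat this prefactor with the ratio $F_\gamma(\text{gap})/F_\gamma(Z_n)$, the constraints $\nu>2-\gamma$ and $\nu<\gamma-1$ being exactly where $\gamma>3/2$ enters. However, the central quantitative step as you wrote it is incorrect. With your additive gap $\theta_n=n^{-1/d}/\log n$ and $Z_n$ of order $n^{-1/d}$, one has
\[
\log\frac{F_\gamma(Z_n)}{F_\gamma(Z_n+\theta_n)}=\bigl(\log(1/Z_n)\bigr)^\gamma-\bigl(\log(1/(Z_n+\theta_n))\bigr)^\gamma
\approx \gamma\bigl(\log(1/Z_n)\bigr)^{\gamma-1}\log\bigl(1+\tfrac{\theta_n}{Z_n}\bigr)\asymp(\log n)^{\gamma-1}\cdot\frac{1}{\log n}=(\log n)^{\gamma-2},
\]
not $\Theta((\log n)^{\gamma-1})$: your substitute bound ``$\log(1/(Z_n+\theta_n))\ge\log(1/Z_n)-\log 2$'' is a lower bound on the subtracted term, so it yields an \emph{upper} bound on the difference, not the lower bound you need. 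Since $(\log n)^{\gamma-2}\to 0$ for $\gamma\le 2$, this cannot dominate the prefactor of order $(\log n)^\nu$ (one would need $\nu<\gamma-2<0$), so with this choice of $\theta_n$ the argument fails precisely in the range $\gamma\in(3/2,2]$ that the lemma is really about. The paper avoids this by taking the gap to shrink more slowly, $\theta_n=n^{-1/d}(\log n)^{1-\gamma+\nu+\eps}$ with $1-\gamma+\nu+\eps<0$: then $\theta_n=o(n^{-1/d})$ still gives the shell event probability $1-o(1)$, while the log-ratio is of order $(\log n)^{\gamma-1}n^{1/d}\theta_n=(\log n)^{\nu+\eps}\gg(\log n)^\nu$ (this requires an event on which $Z_n$ is comparable to $n^{-1/d}$, the paper's $E_n''$).

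Your proposed repair in the last paragraph, demanding $W_n\ge(1+\delta)Z_n$ for fixed $\delta>0$, does \emph{not} hold with probability $1-o(1)$: conditionally on $X_n$ and $Z_n$, the expected number of the remaining points in the annulus $B(X_n;(1+\delta)Z_n)\setminus B(X_n;Z_n)$ is of order $nZ_n^d\bigl((1+\delta)^d-1\bigr)$, and since $nZ_n^d$ is of order one, the failure probability tends to a positive constant of order $\delta$, not to $0$. The multiplicative-gap idea can be salvaged, but only with a two-parameter limit: show $\limsup_n\Pr[W_n<(1+\delta)Z_n]\le C\delta$, prove that for each fixed $\delta$ the conditional rejection probability is $o(1)$ (the log-ratio is then genuinely of order $(\log n)^{\gamma-1}$, using also $Z_n\le\beta(n,\nu')$ so that $\log(1/Z_n)\asymp\log n$), and then let $\delta\downarrow 0$; this mirrors the paper's handling of $E_n''$, where the constant $b$ is chosen after $\eps$. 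Note also that you assert, rather than prove, both the gap estimate and the bound on $\#\mathcal{A}_n$; the paper establishes the former by an explicit shell computation ($\Pr[a_n>0]=o(1)$) and the latter by binomial domination. As written, the proposal does not prove the lemma.
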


Before giving the proof of Lemma \ref{lem4}, we introduce some notation for dealing with conditional probabilities
that we will also use later on.
Let $\cF_n = \sigma ( \cX_n , v_2, v_3, \ldots , v_{n-1} )$,
the $\sigma$-algebra generated by the spatial locations of the vertices
up to and including $X_n$ and by the edge choices made on previous steps. Then $\deg_{n-1} (i) = \1 \{ i \neq 0 \} + \sum_{j = i+1}^{n-1} \1 \{ v_j = i\}$,
$D_{n-1} (x)$,   and  $X_0, \ldots, X_n$ are all $\cF_n$-measurable, and \eqref{rule} can be expressed as
\begin{equation}
\label{rule2}
 \Pr [ v_n = v \mid \cF_n ] = \frac{ \deg_{n-1} (v) F (\rho ( X_v, X_n ))}{D_{n-1} (X_n)} , ~~~ v \in \{0,\ldots, n-1\} .\end{equation}

\begin{proof}[Proof of Lemma \ref{lem4}.]
Again, by \eqref{fcon},
 $\lambda_0 = \inf_{x \in S} f(x) >0$ and $\lambda_1 = \sup_{x \in S} f(x) < \infty$.
Take a sequence of positive numbers $\theta_n$ with $\theta_n = o(n^{-1/d})$,
and, given $X_n$ and $Z_n$, define the shells $A_n := B (X_n ; Z_n + \theta_n) \setminus B(X_n ; Z_n)$.
Let $a_n := \# ( A_n \cap \cX_{n-1} \setminus \{ X_{\eta_1(n)} \} )$, the number
of predecessors to $X_n$, other than its on-line nearest neighbour,
 inside $A_n$.

Conditional on $X_n$ and $Z_n$, the points of $\cX_{n-1} \setminus \{ X_{\eta_1 (n)} \}$
are independent and identically distributed on $S \setminus B( X_n; Z_n)$ with conditional distribution
given for measurable $\Gamma \subseteq S \setminus B( X_n; Z_n)$ by
$\Pr [ \, \cdot  \in \Gamma ] = \int_{\Gamma} g_n ( x) \ud x$, where
\[ g_n (x) = \frac{f(x)}{\Pr [ X_0 \in S \setminus B( X_n; Z_n) \mid X_n , Z_n ]} .\]
Note that, a.s.,
\[ \Pr [ X_0 \in S \setminus B( X_n; Z_n) \mid X_n , Z_n ] = 1 - \int_{S \cap B (X_n ; Z_n )} f(x) \ud x \geq 1 - \lambda_1 \omega_d Z_n^d \geq \frac{1}{2} ,\]
provided $Z_n \leq ( 2 \lambda_1 \omega_d)^{-1/d}$.
Moreover,   $S \cap A_n  $ has volume bounded above by
\[ \omega_d (Z_n + \theta_n)^d - \omega_d Z_n^d \leq C_d \theta_n ( \theta_n^{d-1} + Z_n^{d-1} ) ,\]
for some finite constant $C_d$ depending only on $d$.
Hence, conditional on $X_n$ and $Z_n$, each of the $n-1$ points
  $X_0, \ldots, X_{n-1}$, excluding  $X_{\eta_1(n)}$, lands in $A_n$ with probability at most
  \[ \frac{ \int_{S \cap A_n} f(x) \ud x }{ \Pr [ X_0 \in S \setminus B( X_n; Z_n) \mid X_n , Z_n ] }   \leq   2 \lambda_1 C_d  \theta_n ( \theta_n^{d-1} + Z_n^{d-1} )
    + \1 \{ Z_n > ( 2 \lambda_1 \omega_d)^{-1/d} \} .\]
  It follows that
\[ \Exp [ a_n \mid Z_n ] \leq 2 \lambda_1 C_d  n \theta_n ( \theta_n^{d-1} + Z_n^{d-1} )  + n \1 \{ Z_n > ( 2 \lambda_1 \omega_d)^{-1/d} \} .\]
Taking expectations and using \eqref{nntail} we have
$n \Pr [ Z_n > ( 2 \lambda_1 \omega_d)^{-1/d} ] = o(1)$, while, for any $\alpha >0$, by another application of \eqref{nntail},
\[ \Exp [ Z_n ^\alpha ] = \int_0^\infty \Pr [ Z_n > r^{1/\alpha} ] \ud r
\leq \int_0^\infty \exp \{ - C n r^{d/\alpha} \} \ud r ,\]
for some $C<\infty$, which gives $\Exp [ Z_n ^\alpha ] = O ( n^{-\alpha /d} )$.
Hence 
\[ \Exp [ a_n ] = O (\theta_n^d n ) + O ( \theta_n n^{1/d} ) + o(1) = o(1),\]
provided $\theta_n = o(n^{-1/d})$,
so that, by Markov's inequality,
$\Pr [ a_n > 0 ]  \leq \Exp [ a_n ] = o(1)$.

Now we 
condition on the whole of $\cF_n$.  
Again take $\beta(n, \nu) = n^{-1/d} \exp \{ (\log n)^\nu \}$.
Let $E'_n$ denote the event that $X_n$ is joined to a point outside
$B(X_n; Z_n + \theta_n)$:
\[ E'_n := \{ \rho ( X_{v_n} , X_n ) \geq Z_n + \theta_n \}.\]
Also, for a constant $b >1$ (which later we will choose to be large), set
\[ E''_n := \{ Z_n \leq b^{-1} n^{-1/d}  \} \cup \{ Z_n \geq b n^{-1/d} \} .\]
Finally, define the event (for another constant $C$ to be chosen later)
\[ E_n''' := \left \{ \# \left( \cX_{n-1}  \cap B (X_n ; \beta(n, \nu) ) \right)
\geq C  \exp \{ d (\log n)^\nu \} \right\} .\]
The ball $B ( X_n ;    \beta(n, \nu))$ has volume bounded above by $\omega_d n^{-1} \exp \{ d (\log n)^\nu \}$.
The events $\{ X_j \in B ( X_n ;    \beta(n, \nu)) \}$, $0 \leq j \leq n-1$ are independent each with probability
at most $\lambda_1 \omega_d n^{-1} \exp \{ d (\log n)^\nu \}$, so
$\# \left( \cX_{n-1}  \cap B (X_n ; \beta(n, \nu) ) \right)$
is stochastically dominated by a binomial $(n, \lambda_1 \omega_d n^{-1} \exp \{ d (\log n)^\nu \})$ random variable.
Standard binomial tail bounds show that, for an appropriate $C < \infty$,
$\Pr [ E_n '''] = o(1)$.

On $\{a_n = 0 \} \cap ( E_n' )^{\rm c}$, $X_n$ is necessarily connected to its on-line
nearest neighbour, so that
 the probability that $X_n$ is connected to a point other than its
on-line nearest neighbour satisfies
\begin{align}
\label{eq60}
\Pr [ v_n \neq \eta_1 (n) \mid \cF_n ] & \leq \Pr [ E'_n \mid  \cF_n ] \1( \{ a_n = 0 \} \cap (E''_n)^{\rm c} \cap (E'''_n)^{\rm c} ) \nonumber\\
& \quad ~~{} + \1 \{ a_n > 0 \} + \1( E''_n ) + \1( E'''_n ).\end{align}
For any $\eps >0$, 
Lemma \ref{nnlem}  shows that
we can choose $b$ and $n_0$ sufficiently large so that $\Pr [ E_n'' ] < \eps$ for all
$n \geq n_0$. We have already seen that $\Pr [ a_n > 0 ] = o(1)$ and $\Pr [ E'''_n ] = o(1)$.
We also claim that
\begin{equation}
\label{eq61}
\Pr [ E'_n \mid  \cF_n ] \1( \{ a_n = 0 \} \cap (E''_n)^{\rm c} \cap (E'''_n)^{\rm c} ) = o(1), \as \end{equation}
The bounded convergence theorem implies that the expectation of this last quantity is also $o(1)$, so taking expectations in \eqref{eq60} we
see that for any $\eps >0$, we may choose $b$ such that, for all $n$ large enough, $\Pr [ v_n \neq \eta_1 (n) ] \leq \eps$.
This gives the statement in the lemma.

It remains to prove the claim \eqref{eq61}. First we note that
\[ D_{n-1} ( X_n ) \geq \deg_{n-1} ( \eta_1(n)  ) F_\gamma ( \rho (X_{\eta_1(n)}, X_n ) ) \geq F_\gamma (Z_n). \]
On the other hand, on $\{
a_n =0 \}$, any alternative $X_j$
to $X_{\eta_1(n)}$ among $\cX_{n-1}$
is at distance at least $Z_n + \theta_n$ from $X_n$,
so that for $j \neq \eta_1 (n)$,
\[ \deg_{n-1} ( j ) F_\gamma (\rho(X_j, X_n)) \leq   \exp \{ (\log n)^\nu \} F_\gamma ( Z_n + \theta_n ) , \as, \]
for all $n$ large enough,
by Lemma \ref{lem2}, provided $\nu \in (0,1)$ with $\nu > 2 -\gamma$.

On $(E_n''')^{\rm c} \cap \{ a_n = 0\}$, the contribution of points
inside $B (X_n ;   \beta (n,\nu) )$, other than   $X_{\eta_1(n)}$,
  to $D_{n-1} (X_n)$ is bounded above by
\[   C \exp \{ 2d (\log n)^\nu \} F_\gamma ( Z_n + \theta_n ) ,\]
since there are at most $O ( \exp \{ d (\log n)^\nu \} )$ of these points, their
degrees are at most $O ( \exp \{ (\log n)^\nu \} )$, a.s., by Lemma \ref{lem2},
and they are all at distance at least $Z_n +\theta_n$ from $X_n$.
Moreover, similarly to as in the proof of Lemma \ref{lem1},
the contribution to $D_{n-1} (X_n)$ from any points outside $B(X_n; \beta (n,\nu) )$
is at most $n^2 F_\gamma( \beta (n,\nu))$.

So from \eqref{rule2} we have, on $\{ a_n =0 \} \cap (E_n''')^{\rm c}$, for all $n$ large enough,
\[ \Pr [ E'_n \mid  \cF_n ]
\leq \frac{C \exp \{ 2d (\log n)^\nu \} F_\gamma ( Z_n + \theta_n )
 +n^2 F_\gamma(\beta (n,\nu))}{F_\gamma (Z_n) } .\]
 Here, similarly to \eqref{eq5},
 \[ \frac{n^2 F_\gamma (\beta (n,\nu))}{F_\gamma (Z_n)} = O ( \exp \{ - c (\log n)^{\gamma + \nu -1} \} ) ,\]
 for some $c>0$, as long as $\nu > 2 -\gamma$.
Also we have that, on $(E_n'')^{\rm c}$,
\begin{align*} \frac{F(Z_n + \theta_n  )}{F(Z_n)}
& =
\exp \left\{ (\log (1/Z_n))^\gamma \left( \left( 1 + \frac{ \log ( 1 + (\theta_n /Z_n) )}{\log Z_n } \right)^\gamma -1 \right) \right\}
\\
& = \exp \left\{ - c ( \log n)^{\gamma -1} n^{1/d} \theta_n (1+o(1)) \right\},\end{align*}
provided $\theta_n = o(n^{-1/d})$.
In particular, for $\gamma -1 > \nu$, we can choose $\theta_n = n^{1/d} (\log n)^{1-\gamma+\nu+\eps}$
for some $\eps>0$ and $1-\gamma+\nu +\eps <0$.
The constraints $\gamma -1 > \nu$ and $\nu > 2-\gamma$ entail $\gamma > 3/2$.
With this choice of $\theta_n$, we thus verify \eqref{eq61}.
\end{proof}

Now we can complete the proof of Theorem \ref{thm1}.

\begin{proof}[Proof of Theorem \ref{thm1}.]
Part (i) is Lemma \ref{lem2}. It remains to prove part (ii).
 Let
$R_n = \sum_{i=1}^n \1 \{ v_i \neq \eta_1(i) \} $.
Then, by Lemma \ref{lem4}, $\Exp R_n = o(n)$, which gives \eqref{nnlim}.
We obtain the limit result \eqref{gpa_to_ong} by constructing the GPA graph and ONG on a common probability space.
Indeed, given $\cX_n$ and $G_n$, one can transform the GPA graph $G_n$ into the ONG 
on the same vertex sequence by the reassignment of the endpoint with smaller index of $R_n$ edges, a transformation
that affects the degrees of at most $2 R_n$ vertices. Hence, with this coupling, for any $k \in \N$,
\[ n^{-1} \left|   N^{\mathrm{GPA}}_n (k)  -     N^{\mathrm{ONG}}_n (k)     \right| \leq 2 n^{-1}  R_n , \]
which tends to $0$ in $L^1$. Now the $L^1$ limit statement in \eqref{deglim}
yields \eqref{gpa_to_ong}.
\end{proof}

\section{Proofs for power-law attractiveness}

\subsection{Rejecting on-line nearest-neighbours}

Take $F(r) = r^{-s}$ for $s \in (0,\infty)$.
To prove Theorem \ref{thm:int2}, we consider the event $\{ v_n \neq \eta_1 (n)\}$
 that $X_n$ is joined to a point {\em other than} its nearest neighbour. First we introduce some notation
 on Voronoi cells that will also be used in analysis of the ONG in Section \ref{sec:ongproof}.
Let $\cV_n(i)$ denote the (bounded) Voronoi cell of $X_i$ with respect to $\cX_n$ in $S$, i.e.,
\begin{equation}
\label{voronoi}
 \cV_n (i) := \{ x \in S : \rho (x , X_i ) < \min \{ \rho (x , X_j) : 0 \leq j \leq n, \, j \neq i \} \} .\end{equation}
We need an elementary result showing that Voronoi cells are unlikely to be very small.
\begin{lemma}
Suppose that $\sup_{x \in S} f(x) = \lambda_1 < \infty$. Then, for any $z >0$,
\begin{equation}
\label{626b}
 \Pr [ | \cV_n (i) | < z ] \leq 2^d   \lambda_1 \delta_S^{-1} n  z ,\end{equation}
 where $\delta_S >0$ is the constant in Lemma \ref{balls}.
\end{lemma}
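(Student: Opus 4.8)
The plan is to bound the probability that $|\cV_n(i)| < z$ by showing that such a small Voronoi cell forces $X_i$ to have a near neighbour among $\cX_n \setminus \{X_i\}$, and then to estimate that latter probability via a union bound over the other points together with Lemma~\ref{balls}. The geometric input is the standard fact that if $X_i$ has a neighbour $X_j$ ($j \neq i$) at distance $r$, then every point of $S$ within distance $r/2$ of $X_i$ lies in $\cV_n(i)$; hence $|\cV_n(i)| \geq |B(X_i; r/2) \cap S|$, which by Lemma~\ref{balls} is at least $\delta_S \omega_d (r/2)^d = \delta_S \omega_d 2^{-d} r^d$ as long as $r/2 \le \mathrm{diam}(S)$ (and the ball of radius $r/2$ is automatically inside $\mathrm{diam}(S)$ once $r\le \mathrm{diam}(S)$, while for $r$ larger than $\mathrm{diam}(S)$ the whole of $S$ is contained in $B(X_i;r/2)$, so the bound $|\cV_n(i)| \geq \delta_S \omega_d 2^{-d} (\mathrm{diam}(S))^d$ holds trivially; in any case we only need the regime of small $z$). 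Contrapositively, $|\cV_n(i)| < z$ implies that the nearest point of $\cX_n \setminus \{X_i\}$ to $X_i$ is within distance $r_z := 2 (z / (\delta_S \omega_d))^{1/d}$ of $X_i$, i.e.\ at least one of the $n$ points $X_j$, $j \neq i$, lies in $B(X_i; r_z)$.

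Next I would take expectations over $\cX_n$. Conditioning on $X_i$, each $X_j$ with $j \neq i$ lands in $B(X_i; r_z)$ with probability $\int_{S \cap B(X_i;r_z)} f(x)\,\ud x \leq \lambda_1 |B(X_i;r_z) \cap S| \leq \lambda_1 \omega_d r_z^d$. A union bound over the $n$ choices of $j$ gives
\[
\Pr[\, |\cV_n(i)| < z \,] \leq n \lambda_1 \omega_d r_z^d = n \lambda_1 \omega_d \cdot 2^d \frac{z}{\delta_S \omega_d} = 2^d \lambda_1 \delta_S^{-1} n z,
\]
which is exactly the claimed bound \eqref{626b}. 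Note the $\omega_d$ cancels, as it should from the stated form of the inequality.

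The argument is essentially routine; the only point requiring a little care is the geometric claim that a small Voronoi cell forces a nearby point, and in particular making sure the radius $r_z/2$ used in Lemma~\ref{balls} is within the admissible range $[0, \mathrm{diam}(S)]$ — but since the bound is linear in $z$ and vacuous once $2^d \lambda_1 \delta_S^{-1} n z \geq 1$, we may freely assume $z$ is as small as we like, so $r_z$ is small and this is not a genuine obstacle. I do not anticipate any serious difficulty here.
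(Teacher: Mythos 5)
Your proof is correct and takes essentially the same route as the paper (which follows Berger et al.): a cell of volume less than $z$ forces, via Lemma~\ref{balls} applied to the half nearest-neighbour distance, some other point of $\cX_n$ into $B\bigl(X_i; 2(z/(\delta_S\omega_d))^{1/d}\bigr)$, and a union bound over the $n$ other points gives $2^d\lambda_1\delta_S^{-1}nz$. One small wording point: state the geometric fact as ``if no point of $\cX_n\setminus\{X_i\}$ lies in $B(X_i;r)$ then $S\cap B(X_i;r/2)\subseteq\cV_n(i)$'' (equivalently, take $r$ to be the nearest-neighbour distance), since ``a neighbour at distance $r$'' does not by itself give the inclusion; the contrapositive you actually use is the correct one.
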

\begin{proof}
We follow the idea from \cite[p.\ 311]{bbbcr} (see equation (2) there).
If none of the $n$ points $X_j$ with $0 \leq j \leq n$ and $j \neq i$   lies in $B ( X_i ; r )$,
then $S \cap B (X_i ; r/2)$ is contained in $\cV_n(i)$ and hence, by Lemma \ref{balls},
$| \cV_n (i) | \geq \delta_S \omega_d (r/2)^d$. That is,
$\Pr [ | \cV_n (i) | \geq \delta_S \omega_d (r/2)^d ] \geq \Pr [ \cX_n \cap B ( X_i ; r ) = \{ X_i \} ]$.
Complementation then shows that $| \cV_n(i) | < z$ ($z >0$) implies
that at least one of $n$ points $X_j$ falls in $B ( X_i ; 2 z^{1/d} / (\omega_d \delta_S)^{1/d}  )$.
Hence, by Boole's inequality,
\[
 \Pr [ | \cV_n (i) | < z ] \leq n \Pr [ X_j \in B ( X_i ; 2 z^{1/d} / (\omega_d \delta_S)^{1/d} ) ]
\leq  2^d \delta_S^{-1}  \lambda_1 n  z ,\]
which gives \eqref{626b}.
\end{proof}

Now we can complete the proof of Theorem \ref{thm:int2}.

\begin{proof}[Proof of Theorem \ref{thm:int2}.]
Extending the notation of \eqref{nnnot}, for $\ell \in \N$ we let $\eta_\ell (n)$ be the index of the $\ell$th nearest neighbour of $X_n$ among $\cX_{n-1}$.
Again set $Z_n = \rho ( X_n, X_{\eta_1 (n)} )$ 
and $W_n = \rho ( X_n, X_{\eta_2 (n)} )$.
Then by \eqref{rule2},
\[ \frac{\Pr [ v_{n} \neq \eta_1 (n) \mid \cF_n ]}{\Pr [ v_{n} = \eta_1 (n) \mid \cF_n ]}
  \geq  \frac{\Pr [ v_{n} = \eta_2 (n) \mid \cF_n ]}{\Pr [ v_{n} = \eta_1 (n) \mid \cF_n ]}
  \geq \frac{F (W_n)}{\deg_{n-1} ( \eta_1 (n)) F (Z_n ) } .\]
  Re-arranging and using the fact that $F(r) =r^{-s}$, we obtain
  \begin{align}
  \label{e10}
   \Pr [ v_{n} \neq \eta_1 (n) \mid \cF_n ] \geq \frac{1}{1 + \deg_{n-1} ( \eta_1 (n) ) (W_n/Z_n)^s } \geq \frac{1}{2 \deg_{n-1} ( \eta_1 (n) )} \left( \frac{Z_n}{W_n} \right)^s . \end{align}
  Then \eqref{notnn} will follow from \eqref{e10} together with the following two claims:
  first, there exist constants $k_0 \in \N$ and $\theta_0 \in (0,1)$ such that
  \begin{align}
  \label{e11}
 \liminf_{n \to \infty} \Pr [ \deg_n (\eta_1 (n+1) ) \leq k_0 ] \geq 2 \theta_0 ,\end{align}
  and second, that for any $\theta >0$ there exist constants $c, C \in (0,\infty)$ such that, 
  \begin{align}
  \label{e12}
  \Pr [ Z_n \geq c n^{-1/d} ] \geq 1 - (\theta/3), ~ ~\textrm{and} ~ ~ \Pr [ W_n \leq C n^{-1/d} ] \geq 1 - (\theta/3),
  \end{align}
for all $n$ sufficiently large.
  Indeed, it follows from \eqref{e12} that
  $\Pr [ Z_n / W_n \geq c / C ] \geq 1 - (2 \theta_0 /3)$ for suitable choice of $c$ and $C$, so that, by \eqref{e10} and \eqref{e11},
  $\Pr [ v_{n} \neq \eta_1 (n) \mid \cF_n  ] \geq \frac{1}{2 k_0} (c / C)^s$
  with probability at least $\theta_0/3$ for all $n$ sufficiently large. Then, taking expectations, we obtain \eqref{notnn}.
  Thus it remains to prove the claims \eqref{e11} and \eqref{e12}.

 To verify \eqref{e11},
the idea is that there must be a large proportion of vertices with degrees bounded above by some $k_0$,
and the union of the Voronoi cells associated with these vertices will have volume bounded uniformly below in expectation,
so that $X_{n+1}$ will have such a vertex as its nearest neighbour with strictly positive probability.
We formalize this idea.

With $I_n (k) := \{ i \in \{0,\ldots, n\} : \deg_n (i) \leq k \}$,
we have $\# I_n (k) = n + 1 - N^\mathrm{GPA}_n (k+1)$.
Then taking  $k_0 =  9$, we obtain from  Lemma \ref{deglem}   that
  $\# I_n (k_0)  \geq 4n/5$ for all $n$.
Each vertex $i \in I_n (k_0)$  is associated with a Voronoi cell $\cV_n (i)$.

Let $\Lambda_n (r) = \# \{ i \in \{0,\ldots, n \} : | \cV_n (i) | \geq r / n \}$. Then
\[ \Exp [ \Lambda_n (r) ] = \sum_{i=0}^n \Pr [ | \cV_n(i) | \geq r/n ] = (n+1) \Pr [ | \cV_n (i) |  \geq r/n ] ,\]
by exchangeability. Here, by \eqref{626b},
$\Pr [ | \cV_n (i) |  \geq r/n ] \geq 1 - 2^d \lambda_1 \delta_S^{-1} r$.
Hence we can (and do)
 choose $r = r_0$ sufficiently small so that
$\Exp [ \Lambda_n (r_0) ] \geq 9n/10$, say. Then, by Markov's inequality and the fact that
$\Lambda_n (r_0) \leq 1+n$,
\[ \Pr [ \Lambda_n (r_0) \leq n/ 2 ] \leq \Pr [ n + 1 - \Lambda_n (r_0) \geq n/2 ] \leq \frac{1 + (n/10)}{n/2} \leq 1/4 ,\]
for all $n \geq 40$. So $\Pr [ \Lambda_n (r_0) \geq n/2 ] \geq 3/4$ for all $n \geq 40$.
On $\{ \Lambda_n (r_0) \geq n/2 \}$, since $\# I_n (k_0) \geq 4n/5$, there are at least
$3n/10$ vertices in $I_n(k_0)$ whose Voronoi cells all have volume at least $r_0/n$, so that
\begin{equation}
\label{e13}
 \Pr \Bigg[ \Bigg| \bigcup_{i \in I_n (k_0)} \cV_n (i) \Bigg| \geq 3r_0 /10 \Bigg] \geq 3/4 ,\end{equation}
 for all $n$ sufficiently large.
Hence
\[ \Pr \left[ \deg_n ( \eta_1 (n+1) ) \leq k_0 \right] \geq \Pr \Bigg[ X_{n+1} \in \bigcup_{i \in I_n (k_0)} \cV_n (i)
\Bigg] \geq \lambda_0 \Exp \Bigg[ \Bigg| \bigcup_{i \in I_n (k_0)} \cV_n (i) \Bigg| \Bigg] ,\]
which with \eqref{e13} gives \eqref{e11}, for $2 \theta_0 = 9 r_0 \lambda_0 / 40 >0$.

Finally, \eqref{e12} can be verified by a similar argument to Lemma \ref{nnlem}.
\end{proof}

\subsection{Stretched exponential degree estimates}
\label{sec:stretched-exponential}

Recall that $\cF_n$ denotes the $\sigma$-algebra generated by $\cX_n$ and $v_2, v_3, \ldots, v_{n-1}$
(so  the graph $G_{n-1}$ can be constructed given $\cF_n$). We also introduce the notation
  $\tilde{\cF}_{n}$ 
  for the $\sigma$-algebra generated by $\cX_n$ and $v_2, v_3, \ldots, v_{n}$
  (which includes information about $G_n$ as well).
Throughout this section we take $F(r)=r^{-s}$ for $s > d$, and   assume that
\eqref{fcon} holds.

By \eqref{rule2}, for $0 \leq i \leq n-1$,
\[ \Pr [ v_n=i \mid \cF_n ] = \frac{ \deg_{n-1}(i) \rho(X_i,X_n)^{-s}}
{\sum_{j=0}^{n-1} \deg_{n-1}(j)\rho(X_j,X_n)^{-s}}. \]
Define, for any $x \in S$,
\begin{equation}
\label{eq:zeta_def}
\zeta_{n-1} (x) := n^{-s/d} \sum_{j=0}^{n-1} \rho (X_j , x)^{-s} .\end{equation}
Then we can write
\begin{equation}
\label{stable}
\Pr [ v_n=i \mid \cF_n ] \leq \frac{\deg_{n-1}(i)\rho(X_i,X_n)^{-s}}{n^{s/d} \zeta_{n-1}(X_n)}.
\end{equation}
The next result gives an estimate for the probability that $\zeta_{n-1} (X_n)$ is small.

\begin{lemma}
\label{lem:zeta_unconditional}
There exist constants $C_0 < \infty$ and $u_0 >0$ such that,
for all $t > 0$, 
\begin{equation}
\label{eq:zeta_bound} \limsup_{n \to \infty}   \Pr [ \zeta_{n -1} (X_n) \leq t ]  
  \leq C_0 \exp \{ - u_0 t^{-d/(s-d)} \} .
\end{equation}
\end{lemma}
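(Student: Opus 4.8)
The quantity $\zeta_{n-1}(X_n) = n^{-s/d}\sum_{j=0}^{n-1}\rho(X_j,X_n)^{-s}$ is a sum of positive terms, so to show it is small we only need to show that the \emph{largest few} terms are small, i.e.\ that none of the points $X_j$, $0\le j\le n-1$, is too close to $X_n$. Concretely, the plan is to condition on $X_n=x$ and observe that $\zeta_{n-1}(x) \ge n^{-s/d}\rho(X_{\eta_1(n)},x)^{-s} = n^{-s/d}Z_n^{-s}$, so that $\{\zeta_{n-1}(X_n)\le t\}$ forces $Z_n \ge t^{-1/s} n^{-1/d}$, a \emph{large} nearest-neighbour distance. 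By the nearest-neighbour tail bound \eqref{nntail} of Lemma \ref{nnlem}, with $r = t^{-1/s}n^{-1/d}$ (valid once $r\le\operatorname{diam}(S)$, i.e.\ for $n$ large when $t$ is fixed, and trivially otherwise),
\[
\Pr[Z_n \ge r] \le (1 - \delta_S\lambda_0\omega_d r^d)^n \le \exp\{ -\delta_S\lambda_0\omega_d n r^d \} = \exp\{-\delta_S\lambda_0\omega_d\, t^{-d/s}\},
\]
which already gives a bound of the right shape but with the \emph{wrong exponent} $t^{-d/s}$ rather than the claimed $t^{-d/(s-d)}$. So the crude "one nearest neighbour" bound is not enough, and the real work is to squeeze out the better exponent.

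To get the exponent $d/(s-d)$, I would use more than one near point: the sum $\zeta_{n-1}(x)$ is small only if \emph{all} of $X_0,\dots,X_{n-1}$ avoid a whole annular region around $x$, not just the single ball of radius $Z_n$. The right way to book-keep this is a dyadic (or geometric) decomposition of space around $x$. Fix $x$ and let $\rho_j := \rho(X_j,x)$; order the points so that $\rho_{(1)}\le\rho_{(2)}\le\cdots$. Then $\zeta_{n-1}(x)\ge n^{-s/d}\sum_{\ell\ge 1}\rho_{(\ell)}^{-s}$. If $\zeta_{n-1}(x)\le t$, then for every $\ell$ we need $\rho_{(\ell)}\ge (\ell\,/\,(tn^{s/d}))^{1/s}$ — no, more usefully: for a geometric sequence of radii $r_m = 2^m r_0$ with $r_0 = c\, t^{-1/(s-d)} n^{-1/d}$ for a suitable small constant $c$, the event $\{\zeta_{n-1}(x)\le t\}$ forces the ball $B(x;r_m)$ to contain at most $\asymp (r_m/r_0)^d = 2^{md}$ points for each $m$ (otherwise the contribution of the shell $B(x;2r_m)\setminus B(x;r_m)$ alone, $\gtrsim 2^{md}\cdot n^{-s/d} r_m^{-s}$, would exceed $t$; a short computation using $s>d$ and summing the geometric series in $m$ shows that $r_0 \propto t^{-1/(s-d)}n^{-1/d}$ is exactly the threshold that makes this work). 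The probability that the innermost ball $B(x;r_0)$ is this under-occupied — in fact that it is empty — is, by a Poisson/binomial lower bound on the count (using Lemma \ref{balls} and $f\ge\lambda_0$), at most $\exp\{-c'\, n r_0^d\} = \exp\{-c'\, c^d\, t^{-d/(s-d)}\}$, which is the claimed bound. (One may either use just the emptiness of $B(x;r_0)$, giving the clean exponent with a suboptimal constant, or the full dyadic occupancy event; emptiness of $B(x;r_0)$ alone already forces $\zeta_{n-1}(x)\le t$ to fail provided we instead bound $\zeta_{n-1}(x) \ge n^{-s/d} r_0^{-s}$ and choose $c$ so that $n^{-s/d}r_0^{-s} = c^{-s} t^{s/(s-d)} n^{-s/d}\cdot n^{s/d}$... — care is needed: the single-ball bound again only gives $t^{-d/s}$, so the dyadic refinement genuinely is required, not optional).

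Thus the key steps, in order, are: (1) reduce to the conditional statement $\Pr[\zeta_{n-1}(x)\le t]$ uniformly in $x\in S$, then integrate against $f$; (2) set up the geometric decomposition into shells $B(x;2^m r_0)\setminus B(x;2^{m-1}r_0)$ with $r_0 = c\,t^{-1/(s-d)}n^{-1/d}$ and show that $\zeta_{n-1}(x)\le t$ forces an occupancy constraint $\#(\cX_{n-1}\cap B(x;2^m r_0)) \le \kappa\, 2^{md}$ simultaneously for all $m\ge 0$ — this is where $s>d$ enters, to make $\sum_m 2^{md}(2^m r_0)^{-s}$ converge and be dominated by its $m=0$ term; (3) bound the probability of this occupancy constraint from above by the probability that $B(x;r_0)$ contains at most $\kappa$ points, which by Lemma \ref{balls} and the lower density bound is at most $\exp\{-u_0\, n r_0^d\} = C_0\exp\{-u_0 c^d\, t^{-d/(s-d)}\}$; (4) absorb the $\limsup_n$ by noting $r_0\to 0$, so $r_0\le\operatorname{diam}(S)$ eventually and the binomial tail estimate applies, and relabel constants. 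The main obstacle is step (2): getting the occupancy constraint with the correct power of $t$ requires choosing the geometric ratio and threshold $\kappa$ correctly and carefully tracking that the tail sum over shells is controlled by $s>d$; everything else (steps 1, 3, 4) is a routine application of Lemma \ref{balls}, Lemma \ref{nnlem}-style binomial tail bounds, and exchangeability.
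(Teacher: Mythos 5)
Your overall strategy---a direct geometric depletion estimate at the spatial scale $r_0 = c\,t^{-1/(s-d)}n^{-1/d}$, uniformly in $x$ and then integrated against $f$---is genuinely different from the paper's proof (which bounds $\zeta_{n-1}(x)$ below by a normalized i.i.d.\ heavy-tailed sum, invokes convergence to a positive stable law of index $d/s$, and quotes finiteness of $\Exp[\exp(u_0\zeta^{-d/(s-d)})]$ from Brockwell--Brown), and it can be made to work; but the pivotal step (2), which you yourself flag as the main obstacle, is wrong as stated. The event $\{\zeta_{n-1}(x)\le t\}$ does \emph{not} force $\#(\cX_{n-1}\cap B(x;2^m r_0))\le\kappa\,2^{md}$ for a constant $\kappa$ and all $m\ge0$: since each point of $B(x;2^m r_0)$ contributes at least $(2^m r_0)^{-s}$, the event only yields $\#(\cX_{n-1}\cap B(x;2^m r_0))\le t\,n^{s/d}(2^m r_0)^{s}=c^{s}2^{ms}t^{-d/(s-d)}$, which grows like $2^{ms}$ rather than $2^{md}$ and blows up as $t\downarrow0$. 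In fact your constraint is deterministically false at large $m$ for small $t$: once $2^m r_0\approx\mathrm{diam}(S)$ the ball contains all $n$ points, while $\kappa 2^{md}\approx \kappa\,\mathrm{diam}(S)^d c^{-d}t^{d/(s-d)}n\ll n$. The justification you give (an over-occupied shell alone would push $\zeta_{n-1}(x)$ above $t$) is quantitatively incorrect: a shell at scale $2^m r_0$ containing $\kappa 2^{md}$ points contributes only about $\kappa 2^{m(d-s)}c^{-s}t^{s/(s-d)}=\kappa 2^{m(d-s)}c^{-s}t\cdot t^{d/(s-d)}\ll t$ when $t$ is small. Consequently step (3), which estimates the probability that $B(x;r_0)$ contains at most a \emph{constant} number $\kappa$ of points, is applied to an event that $\{\zeta_{n-1}(x)\le t\}$ does not imply.

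The repair is short and shows the dyadic decomposition is unnecessary rather than essential: work at the single scale $r_0$ and compare the count to its \emph{mean}, not to a constant and not to emptiness. The event implies $N_0:=\#(\cX_{n-1}\cap B(x;r_0))\le c^{s}t^{-d/(s-d)}$, while by Lemma \ref{balls} and \eqref{fcon}, once $r_0\le\mathrm{diam}(S)$ (true for all $n$ large with $t$ fixed, which is all the $\limsup$ needs) one has $\Exp N_0\ge \lambda_0\delta_S\omega_d\,n r_0^d=\lambda_0\delta_S\omega_d c^{d}t^{-d/(s-d)}$. Choosing $c$ with $c^{s-d}\le\tfrac12\lambda_0\delta_S\omega_d$ (possible since $s>d$), the event forces $N_0\le\tfrac12\Exp N_0$, and the binomial Chernoff lower tail gives $\Pr[\zeta_{n-1}(x)\le t]\le\exp\{-\Exp N_0/8\}\le\exp\{-u_0t^{-d/(s-d)}\}$ uniformly in $x\in S$; since $X_n$ is independent of $\cX_{n-1}$ you may integrate over $x$, and adjusting $C_0$ makes the bound trivial for $t\ge1$. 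With that correction your argument is a self-contained, more elementary alternative to the paper's stable-law route, at the cost of a less explicit constant $u_0$; without it, the written proof does not go through.
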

\begin{proof}
First, for fixed $x \in S$, we give a tail estimate for the i.i.d.\ nonnegative
random variables $\rho (X_j , x)^{-s}$ appearing in \eqref{eq:zeta_def}.
We have, for $r >0$,
\begin{align*}
\Pr [ \rho (X_j , x)^{-s} > r ] = \Pr [ X_j \in B (x ; r^{-1/s} ) ] \geq \lambda_0 \delta_S \omega_d r^{-d/s} , 
\end{align*}
using the lower bound in 
\eqref{vol0} and with $\lambda_0 = \inf_{x\in S} f(x) >0$. Hence the normalized sum $\zeta_{n-1} (x)$
stochastically dominates 
\[ \zeta_{n-1} := n^{-s/d} \sum_{j=0}^{n-1} \xi_j ,\]
where the $\xi_j$ are i.i.d.\ nonnegative random variables with
$\Pr [ \xi_j > r ] = \lambda_0 \delta_S \omega_d r^{-d/s}$.
Now, the $\xi_j$ are
in the normal domain of attraction of a positive stable law
with index $d/s \in (0,1)$, so that $\zeta_{n-1}$ converges 
in distribution as $n \to \infty$ to $\zeta$, a random variable with a positive stable law with index $d/s \in (0,1)$.
Hence, for all $x \in S$ and any $t >0$,
\[ \limsup_{n \to \infty} \Pr [ \zeta_{n -1} (x) \leq t ] \leq \lim_{n \to \infty} \Pr [ \zeta_{n-1} \leq t ] = \Pr [ \zeta \leq t ] . \]

Given that $\zeta$ is a random variable with a positive stable law with index $\alpha \in (0,1)$,
 for $p >0$ the random variable $\zeta^{-p}$ satisfies $\Exp [ \exp ( u \zeta^{-p} ) ] < \infty$
for $u \geq 0$ in a neighbourhood of zero, provided $p \leq \frac{\alpha}{1-\alpha}$: see e.g.\
the proof of Lemma 1 in \cite{bb}. Hence there exist $u_0 >0$
and $C_0 < \infty$ such that, for $p = \frac{d}{s-d} >0$, $\Exp [ \exp ( u_0 \zeta^{-p} ) ] \leq C_0$.
Thus
\[ \Pr [ \zeta \leq t ] = \Pr [ \exp ( u_0 \zeta^{-p} ) \geq \exp (  u_0 t^{-p} ) ] ,\]
and the result now follows from Markov's inequality.
\end{proof}

The next result is a conditional version of \eqref{eq:zeta_bound}, given $\cX_{n-1}$. The proof
uses
 a  concentration argument
based on
 independently `resampling' sites; a similar
 idea
will be used also in the proof of Lemma \ref{ongconc} below.
Let $X_0', X_1', \ldots$ be an independent copy of the sequence  $X_0, X_1, \ldots$.
For $0 \leq i \leq n$, let $\cX^i_n = ( X_0, \ldots, X_{i-1}, X'_i, X_{i+1}, \ldots, X_n)$, the  sites
$\cX_n$ but with the location of vertex $i$ independently resampled.
 
\begin{lemma}
\label{lem:zeta_conditional}
There exist constants $C_1 < \infty$ and $u_1 >0$ such that,
for any $t > 0$, a.s.,
\[ \limsup_{n \to \infty} \Pr [ \zeta_{n-1} (X_n) \leq t \mid \cX_{n-1} ] \leq C_1 \exp \{ - u_1 t^{-d/(s-d)} \}   .\]
\end{lemma}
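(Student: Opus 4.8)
The plan is to reduce the assertion to a concentration statement around the unconditional bound of Lemma~\ref{lem:zeta_unconditional}. Write
\[ g_n(\cX_{n-1}) := \Pr[ \zeta_{n-1}(X_n) \le t \mid \cX_{n-1} ] = \int_S f(x)\, \1\{ \zeta_{n-1}(x) \le t \}\, \ud x , \]
a function of the i.i.d.\ sites $X_0, \ldots, X_{n-1}$. Since $X_n$ is independent of $\cX_{n-1}$, we have $\Exp[ g_n(\cX_{n-1}) ] = \Pr[ \zeta_{n-1}(X_n) \le t ]$, so by Lemma~\ref{lem:zeta_unconditional} its $\limsup$ as $n\to\infty$ is at most $C_0 \exp\{ -u_0 t^{-d/(s-d)} \}$. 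It therefore suffices to prove that $g_n(\cX_{n-1}) - \Exp[ g_n(\cX_{n-1}) ] \to 0$ almost surely; granting this, $\limsup_{n\to\infty} g_n(\cX_{n-1}) \le \limsup_{n\to\infty} \Exp[ g_n(\cX_{n-1}) ] \le C_0 \exp\{ -u_0 t^{-d/(s-d)} \}$ a.s., which is the lemma with $C_1 = C_0$ and $u_1 = u_0$.

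For the a.s.\ convergence I would use the resampling construction just introduced. Fix $i \in \{0,\ldots,n-1\}$ and replace $X_i$ by its independent copy $X_i'$, obtaining $\cX^i_{n-1}$ and the perturbed field $\zeta^i_{n-1}(\cdot)$. Because $\zeta_{n-1}(x)$ is a sum of the nonnegative terms $n^{-s/d}\rho(X_j,x)^{-s}$, deleting the $i$th term produces a field $\zeta^{\setminus i}_{n-1}(\cdot) \le \min\{ \zeta_{n-1}, \zeta^i_{n-1} \}$, so both $g_n(\cX_{n-1})$ and $g_n(\cX^i_{n-1})$ lie between $\int_S f\, \1\{ \zeta^{\setminus i}_{n-1} \le t \}$ and that integral minus the $f$-measure of the set on which reinstating $X_i$ (respectively $X_i'$) pushes $\zeta^{\setminus i}_{n-1}$ above $t$. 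That set is contained in the ball of radius $n^{-1/d} t^{-1/s}$ about $X_i$ --- where the single reinstated term alone already exceeds $t$ --- together with the thin layer $\{ x : t - n^{-s/d}\rho(X_i,x)^{-s} < \zeta^{\setminus i}_{n-1}(x) \le t \}$ around the level set $\{ \zeta^{\setminus i}_{n-1} = t \}$. By \eqref{vol0} and the boundedness of $f$, the first piece has $f$-measure $O(n^{-1})$ (with constant depending on $t$, $\lambda_1$, $S$), and, on all but an exceptional set of configurations, so does the second. Thus, off exceptional events, $g_n$ is a $(\mathrm{const}(t)/n)$-Lipschitz function of the $n$ independent sites, and a bounded-differences (or Talagrand-type) inequality gives $\Pr[ | g_n(\cX_{n-1}) - \Exp g_n(\cX_{n-1}) | > \eps ]$ summable in $n$ for each fixed $\eps>0$; Borel--Cantelli (applied for each $\eps = 1/k$) then delivers the a.s.\ convergence.

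The step I expect to be the main obstacle is exactly the control of the exceptional configurations: for a pathological $\cX_{n-1}$ the level set $\{ \zeta^{\setminus i}_{n-1} = t \}$ could be nearly flat, so the layer above it could carry macroscopic $f$-measure and the naive bounded-differences estimate breaks down. Handling this requires a concentration inequality that tolerates a small exceptional event --- one must show that the configurations for which a single resampling moves $g_n$ by more than $n^{-1+o(1)}$ are exponentially rare --- or, alternatively, a passage through Efron--Stein-type moment bounds together with a subsequence argument. This is the same difficulty, and the same kind of resolution, as in the proof of Lemma~\ref{ongconc} for the ONG, except that there stabilization makes the locality of a resampling transparent, whereas here it has to be extracted from the decay of the kernel $\rho(\cdot,\cdot)^{-s}$ with $s>d$.
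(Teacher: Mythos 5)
Your reduction (conditioning, taking expectations, invoking Lemma \ref{lem:zeta_unconditional}, then seeking a.s.\ concentration of $g_n(\cX_{n-1})$ around its mean via resampling one site) is exactly the paper's strategy, and your estimate for the first piece --- the ball of radius $n^{-1/d}t^{-1/s}$ about $X_i$, of $f$-measure $O_t(n^{-1})$ --- is fine. But the argument has a genuine gap precisely at the point you flag and then leave unresolved: the ``thin layer'' $\{ x : t - n^{-s/d}\rho(X_i,x)^{-s} < \zeta^{\setminus i}_{n-1}(x) \le t\}$ has small $f$-measure only if the random field $\zeta^{\setminus i}_{n-1}$ is not nearly flat at level $t$, and proving that the offending configurations are exponentially rare is essentially an anti-concentration statement for $\zeta^{\setminus i}_{n-1}(x)$ near the level $t$, uniformly in $x$ --- which is not easier than the lemma you are trying to prove. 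Neither of your proposed fixes (a bounded-differences inequality tolerating exceptional events, or Efron--Stein plus subsequences) is carried out, and as written the per-coordinate Lipschitz bound for $g_n$ itself is simply not established.

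The paper's proof avoids this obstacle by never working with the indicator at all: it replaces $\1_{[0,t]}$ by the piecewise-linear function $\chi^n_t$ which equals $1$ on $[0,t]$, vanishes beyond $t+n^{-\delta}$, and has Lipschitz constant $n^{\delta}$, and sets $\phi(\cX_{n-1}) = \int_S f(x)\chi^n_t(\zeta_{n-1}(x))\,\ud x \ge g_n(\cX_{n-1})$. Resampling $X_i$ then changes the integrand at $x \notin B(X_i;r_n)$ by at most $n^{\delta} n^{-s/d} r_n^{-s}$ pointwise, so splitting at $r_n = n^{-\nu}$ with $\nu = ((s/d)-\delta)/(s+d)$ and $(s/d)-\delta>1$ gives a \emph{deterministic}, configuration-uniform bounded-differences constant $O(n^{-d/(s+d)})$; McDiarmid/Talagrand then applies with no exceptional events, the deviation probabilities are summable, and Borel--Cantelli yields $\phi(\cX_{n-1}) \le 2\Exp\phi(\cX_{n-1})$ eventually a.s. The smoothing costs only the replacement of $t$ by $t+n^{-\delta} \le 2t$ when comparing $\Exp\phi$ with Lemma \ref{lem:zeta_unconditional}, which changes the constants $C_1, u_1$ but not the form of the bound (so your claim that one can take $C_1=C_0$, $u_1=u_0$ would also need adjusting). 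In short: the missing idea is the Lipschitz regularisation of the indicator, traded off against the kernel decay $s>d$; once you add that, your outline becomes the paper's proof.
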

\begin{proof}
We approximate the indicator function $\1_{[0,t]}$ by 
 $\chi_t^n : \RP \to [0,1]$ defined by
\[ \chi_t^n (x) := \begin{cases} 
1 & \text{ if } x \leq t \\
1 - (x-t)n^\delta & \text{ if } t \leq x \leq t + n^{-\delta} \\
0 & \text{ if } x \geq t + n^{-\delta} ,\end{cases}
\]
where $\delta >0$ is a constant to be specified later.
Then
\begin{align*}
 \Pr [ \zeta_{n-1} (X_n) \leq t \mid \cX_{n-1} ] & = \Exp [ \1_{[0,t]} ( \zeta_{n-1} ( X_n) ) \mid \cX_{n-1} ] \\
& \leq \Exp [ \chi_t^n ( \zeta_{n-1} (X_n) ) \mid \cX_{n-1} ] .\end{align*}  
Moreover, $\chi_t^n$ has the Lipschitz property
\begin{equation}
\label{eq:chi_lip}
 \chi_t^n (r) - \chi_t^n (s) \leq n^\delta (s-r)^+ .\end{equation}
We have that
\[ \Exp [ \chi_t^n ( \zeta_{n-1} (X_n) ) \mid \cX_{n-1} ] = \int_{  S} f(x) \chi_t^n ( \zeta_{n-1} (x) ) \ud x = \phi ( \cX_{n-1} ) \]
for some measurable $\phi : S^n \to [0,1]$.
To obtain a concentration result for $\phi (\cX_{n-1})$, we estimate
$\phi (\cX^i_{n-1} ) - \phi (\cX_{n-1} )$,
the change in $\phi$ on independently resampling $X_i$. 
We introduce the notation
\begin{equation}
\label{eq:zetai_def}
 \zeta^i_{n-1} (x) = \zeta_{n-1} (x) + n^{-s/d} \left( \rho (X_i' , x)^{-s} -  \rho (X_i , x)^{-s} \right) ,\end{equation}
the change in the quantity given by \eqref{eq:zeta_def} on resampling $X_i$.
Then, for $r_n >0$, 
\begin{align*}
\phi (\cX^i_{n-1} ) - \phi (\cX_{n-1} ) & \leq  \int_{B ( X_i ; r_n)} f(x) \ud x + \int_{S \setminus  B ( X_i ; r_n) } 
f(x) \left( 
\chi_t^n ( \zeta^i_{n-1} (x) ) - \chi_t^n ( \zeta_{n-1} (x) ) \right) \ud x \\
& \leq \lambda_1 \omega_d r_n^d + \int_{S \setminus  B ( X_i ; r_n)} n^\delta 
  f(x) \left( 
 \zeta_{n-1} (x) - \zeta^i_{n-1} (x)  \right)^+ \ud x ,
\end{align*}
using \eqref{eq:chi_lip}. Now, by \eqref{eq:zetai_def},
\[ \left( \zeta_{n-1} (x) - \zeta^i_{n-1} (x) \right)^+
  \leq n^{-s/d} \rho( x, X_i)^{-s} \leq n^{-s/d} r_n^{-s} ,\]
provided $x \notin B ( X_i ; r_n)$. So we obtain
\[ \phi (\cX^i_{n-1} ) - \phi (\cX_{n-1} )  \leq \lambda_1 \omega_d r_n^d + n^\delta n^{-s/d} r_n^{-s} .\]
Since $s >d$, we may choose $\delta >0$ such that $(s/d) - \delta > 1$.
Take $r_n = n^{-\nu}$
where $\nu =  \frac{(s/d) -\delta}{s+d } > 0$. Then we have that, for some constant $C< \infty$,
\[  \phi (\cX^i_{n-1} ) - \phi (\cX_{n-1} ) \leq C n^{-\frac{d ((s/d)-\delta)}{s+d}} \leq
 C n^{-\frac{d}{s+d} } .\]
 Now an appropriate version of Talagrand's inequality, Theorem 4.5 of McDiarmid \cite{mcd}, yields,
 for some $c_1>0$, for all $r >0$,
 \begin{equation}
 \label{eq:phi_tal0}
  \Pr [  | \phi (\cX_{n-1} ) - m_{n-1} | \geq r  ] \leq 4 \exp \left\{ - c_1 n^{\frac{2d}{s+d}}  r^2 \right\} ,\end{equation}
where $m_{n-1}$ is a median of $\phi (\cX_{n-1} )$. In turn, \eqref{eq:phi_tal0} implies, by Lemma 4.6 of \cite{mcd},
that $| m_{n-1} - \Exp \phi (\cX_{n-1} ) | \leq c_2 n^{-\frac{d}{s+d}}$ for some $c_2 < \infty$.
 Here 
 \[ \Exp \phi (\cX_{n-1} )  = \Exp [ \chi_t^n ( \zeta_{n-1} (X_n) )   ] \geq \Pr [ \zeta_{n-1} (X_n) \leq t   ] , \]
 which for a fixed $t>0$ is bounded below uniformly in $n$,
as can be proved using an analogous argument to the proof of Lemma \ref{lem:zeta_unconditional},
this time using the upper bound in \eqref{vol0}.
It follows that, for some $c_3 >0$,
\begin{equation}
 \label{eq:phi_tal}
 \Pr [    \phi (\cX_{n-1} ) \geq 2 \Exp \phi (\cX_{n-1} )  ] \leq 4 \exp \left\{ - c_3 n^{\frac{2d}{s+d}}   \right\}.
\end{equation}
The right-hand side of \eqref{eq:phi_tal}
 is summable in $n$, so the Borel--Cantelli lemma shows 
 \[  \Pr [ \zeta_{n-1} (X_n) \leq t \mid \cX_{n-1} ] \leq \phi (\cX_{n-1} ) \leq 2 \Exp \phi (\cX_{n-1} ) , \as ,\]
for all but finitely many $n$.       
 Here, for $t >0$,
 \[ \Exp \phi (\cX_{n-1} ) \leq  \Pr [ \zeta_{n-1} (X_n) \leq t +n^{-\delta}  ] \leq \Pr [ \zeta_{n-1} (X_n) \leq 2 t   ] \]
 for all $n$ large enough. Now the statement follows from \eqref{eq:zeta_bound}.
\end{proof}
 
Choosing $t = k^{-\gamma (s-d)/d}$ with $\gamma \in (0,1)$ in Lemma   \ref{lem:zeta_conditional}, we obtain the key estimate
\begin{equation}
\label{zeta}
\limsup_{n \to \infty}   \Pr [ \zeta_{n -1} (X_n) \leq k^{-\gamma (s-d)/d} \mid \cX_{n-1} ]
\leq C_1 \exp \{ - u_1 k^\gamma \} , \as \end{equation}
In what follows, $C_2, C_3, \ldots$ represent constants not depending on $n$ or $k$.
We have, for any $B>0$ and $t >0$,
\begin{align}
\label{split}
\Pr [ v_n=i, \, \zeta_{n-1}(X_n)> t \mid \tilde{\cF}_{n-1} ] 
  \leq \Pr [  \rho(X_i,X_n) \leq Bn^{-1/d} \mid \tilde{\cF}_{n-1} ]
\nonumber\\
   {} + \Pr [ v_n=i, \, \rho(X_i,X_n)> Bn^{-1/d}, \, \zeta_{n-1}(X_n) > t \mid \tilde{\cF}_{n-1} ].
\end{align}
The first term on the right-hand side of \eqref{split}
 is at most $C_2 B^dn^{-1}$, and the second term, 
by \eqref{stable}, 
is bounded above by 
\[ \frac{\deg_{n-1}(i)}{t n^{s/d}} \int_S f (x) \rho(X_i,x)^{-s} \1 \{ \rho(X_i,x)>Bn^{-1/d} \}   \ud x .\]
For $s>d$, the latter integral is bounded above by  
\[ C_3 \int_{Bn^{-1/d}}^{\infty}  \rho^{-s}\rho^{d-1} \ud \rho
= C_4 B^{d-s} n^{(s/d)-1} .\]
Hence we obtain  from \eqref{split} that
 \begin{equation}
 \label{indverx}
 \Pr [ v_n=i, \, \zeta_{n-1}(X_n)> t \mid \tilde{\cF}_{n-1} ]
 \leq n^{-1} \left(C_2 B^d + \frac{C_4}{t} B^{d-s} \deg_{n-1}(i) \right).
 \end{equation}

For ease of notation, let $\hier[q]{n}{k}$ be the proportion of vertices of $G_n$ with degree at least $k$, 
so that $\hier[q]{n}{k} := (n+1)^{-1} N^{\rm GPA}_n (k)$.
Then 
 the proportion of vertices of $G_n$ with degree $k$
 is equal to $\hier[q]{n}{k} - \hier[q]{n}{k+1}$, so that \eqref{indverx} yields
\begin{align*}
 & \Pr [ \deg_{n-1}(v_n) = k, \, \zeta_{n-1}(X_n)> t \mid  \tilde{\cF}_{n-1} ]
  =  \sum_{i : \deg_{n-1} (i) = k } \Pr [ v_n=i, \, \zeta_{n-1}(X_n)> t \mid \tilde{\cF}_{n-1} ] \nonumber\\
 & \qquad\qquad\qquad\qquad\qquad\qquad\qquad {} 
\leq  \left( \hier[q]{n-1}{k} - \hier[q]{n-1}{k+1} \right) \left(C_2 B^d + \frac{C_4}{t}B^{d-s}k\right).   \end{align*}
We take $t = k^{-\gamma (s-d)/d}$ for $\gamma \in (0,1)$,
and choose $B = k^{(\gamma/d) + (1/s) (1-\gamma)}$ to get
\begin{align*}
& \Pr [ \deg_{n-1}(v_n) = k, \, \zeta_{n-1}(X_n)> k^{-\gamma (s-d)/d} \mid  \tilde{\cF}_{n-1} ]
   \leq  C_5 \left( \hier[q]{n-1}{k} - \hier[q]{n-1}{k+1} \right)   k^{\gamma + (d/s) (1-\gamma)}  .   \end{align*}
Now incorporating the case where $\zeta_{n-1}(X_n)$ is small, using \eqref{zeta}, gives, a.s.,
for all $n$ sufficiently large,
\begin{align}
\label{degk}
& \Pr [ \deg_{n-1}(v_n)=k \mid \tilde{\cF}_{n-1} ]  
  \leq C_6 \re^ { - u_1 k^\gamma } + C_5 \left( \hier[q]{n-1}{k} - \hier[q]{n-1}{k+1} \right) k^{\beta} 
 ,\end{align}
 where for notational ease we have set $\beta = \gamma + (d/s) (1-\gamma)$.
 For any $k$, between times $n-1$ and $n$, the number of vertices
 of degree at least $k$ either stays the same, or increases by exactly one;
 it increases if and only if
  $\deg_{n-1}(v_n)=k-1$, so that $\deg_n (v_n) = k$.
  Thus 
  \[ \Exp [ \hier[q]{n}{k+1} \mid \tilde{\cF}_{n-1} ] - \hier[q]{n-1}{k+1} = \frac{1}{n+1} \left( n \hier[q]{n-1}{k+1} 
+ \Pr [ \deg_{n-1}(v_n)=k \mid \tilde{\cF}_{n-1} ] \right) - \hier[q]{n-1}{k+1} ,\]
  and 
  we may express \eqref{degk} as
\begin{align}
\label{degk2}
  \Exp [ \hier[q]{n}{k+1} \mid \tilde{\cF}_{n-1} ]  - \hier[q]{n-1}{k+1}  
 & \leq \frac{1}{n+1} \left( 
C_6 \re^ { - u_1 k^\gamma }  + C_5  \left( \hier[q]{n-1}{k} - \hier[q]{n-1}{k+1} \right) k^{\beta} 
-   \hier[q]{n-1}{k+1} 
  \right) \nonumber\\
 & = \frac{1}{n+1} \left( 
C_6 \re^ { - u_1 k^\gamma }  +   \hier[q]{n-1}{k} C_5 k^{\beta}  - \hier[q]{n-1}{k+1}  (1 + C_5 k^{\beta} )
  \right)
 .\end{align}
 
If we suppose that $\hier[q]{n}{k}\leq \tau_k$ for some $\tau_k$ and all $n$ sufficiently large  
(which we can, of course, always do for $\tau_k=1$) 
then \eqref{degk2} gives, for $n$ large enough,
\begin{equation}
\label{qstau}
\Exp [ \hier[q]{n}{k+1} \mid \tilde{\cF}_{n-1} ] -\hier[q]{n-1}{k+1} 
\leq \frac{1}{n+1}
\left(
C_6 \re^ { - u_1 k^\gamma }  +   \tau_k C_5 k^{\beta}  - \hier[q]{n-1}{k+1}  (1 + C_5 k^{\beta} )
\right).\end{equation} 

The final step in the proof of Theorem \ref{thm:int1} is an analysis of
 \eqref{qstau} that will enable us to iteratively improve the bound $\tau_k$. The
first part of the analysis of \eqref{qstau} will make use of
 the following stochastic approximation
result, which is related to 
  Lemma 2.6 of \cite{pemantle} and of some independent interest.
  
  \begin{lemma}
  \label{lem:stoch_approx}
  Let $(\cG_n ; n \in \ZP )$ be  a filtration.
  Let $g$ be a bounded function on $\RP$. For $n \in \ZP$, let $Y_n, r_n, \xi_n$ be $\cG_n$-measurable random variables, with $Y_n \in \RP$,
  and 
  \begin{equation}
\label{eq:stoch_approx}
Y_{n+1} - Y_n \leq \gamma_n \left( g ( Y_n) + \xi_{n+1} + r_n \right) ,\end{equation}
for constants $\gamma_n >0$.
  Suppose also that
  \begin{itemize}
  \item[(i)] $\Exp [ \xi_{n+1} \mid \cG_n ] =0$ and $\Exp [ \xi_{n+1}^2 \mid \cG_n ] \leq C$ for some constant $C<\infty$;
  \item[(ii)] $\sum_n \gamma_n = \infty$, $\sum_n \gamma_n^2 < \infty$, and $\sum_{n} \gamma_n | r_n | < \infty$ a.s.;
  \item[(iii)] $g(y) < - \delta$ for $y > y_0$ for constants $\delta >0$ and $y_0 \in \RP$.
  \end{itemize}
  Then $\limsup_{n \to \infty} Y_n \leq y_0$, a.s.
  \end{lemma}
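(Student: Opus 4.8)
The plan is to run a classical Robbins--Monro / supermartingale argument: push all the noise into two almost surely convergent sequences, show that $Y_n$ cannot stay above $y_0$ forever, and then show that once $Y_n$ comes close to $y_0$ it stays trapped just above it. Set $M_n := \sum_{k=0}^{n-1}\gamma_k\xi_{k+1}$ and $R_n := \sum_{k=0}^{n-1}\gamma_k r_k$, and write $G := \sup_y |g(y)| < \infty$. Since the $\gamma_k$ are deterministic with $\sum_k\gamma_k^2<\infty$, assumption (i) makes $(M_n)$ an $L^2$-bounded $(\cG_n)$-martingale (orthogonal increments, $\Exp[M_n^2]\leq C\sum_k\gamma_k^2$), so it converges a.s.; by (ii), $R_n$ is absolutely convergent a.s. Hence on an almost sure event $\Omega_0$ the tails $\sup_{\ell\geq m}\big|\sum_{k=m}^{\ell}\gamma_k\xi_{k+1}\big|$ and $\sum_{k\geq m}\gamma_k|r_k|$ both tend to $0$ as $m\to\infty$, and $\gamma_m\to 0$. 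All remaining analysis is carried out on $\Omega_0$.

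First I would rule out a permanent excursion above $y_0$: if $Y_n>y_0$ for all $n$ beyond some $N$, then $g(Y_n)<-\delta$ by (iii), and summing \eqref{eq:stoch_approx} gives $Y_m-Y_N\leq -\delta\sum_{k=N}^{m-1}\gamma_k+\sum_{k=N}^{m-1}\gamma_k\xi_{k+1}+\sum_{k=N}^{m-1}\gamma_k r_k$; as $\sum_k\gamma_k=\infty$ the first term tends to $-\infty$ while the other two converge, forcing $Y_m\to-\infty$, contradicting $Y_m\in\RP$. So on $\Omega_0$ we have $Y_n\leq y_0$ for infinitely many $n$.

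Next is the trapping step. Fix $\eps>0$ and $\eta\in(0,\eps)$, and pick (random) $N$ with $\gamma_m G\leq\eta/3$, $\sup_{\ell\geq m}\big|\sum_{k=m}^{\ell}\gamma_k\xi_{k+1}\big|\leq\eta/3$, and $\sum_{k\geq m}\gamma_k|r_k|\leq\eta/3$ for all $m\geq N$. I claim that if $\tau\geq N$ and $Y_\tau\leq y_0+\eps$ then $Y_n\leq y_0+\eps+\eta$ for all $n\geq\tau$. To see this, fix $n\geq\tau$ and let $m$ be the \emph{largest} index in $\{\tau,\dots,n\}$ with $Y_m\leq y_0+\eps$ (the set is nonempty as it contains $\tau$). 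For $m<j\leq n-1$ we have $Y_j>y_0+\eps>y_0$, so $g(Y_j)<-\delta<0$; summing \eqref{eq:stoch_approx} from $m$ to $n-1$, bounding $g(Y_m)\leq G$ and discarding the negative terms $\gamma_j g(Y_j)$ for $j>m$, yields
\[ Y_n \leq (y_0+\eps) + \gamma_m G + \Big|\sum_{k=m}^{n-1}\gamma_k\xi_{k+1}\Big| + \sum_{k\geq m}\gamma_k|r_k| \leq y_0+\eps+\eta , \]
using $m\geq\tau\geq N$ (the case $m=n$ being trivial). By the previous paragraph there is some $\tau\geq N$ with $Y_\tau\leq y_0\leq y_0+\eps$, so $\limsup_n Y_n\leq y_0+\eps+\eta$ on $\Omega_0$; letting $\eta\downarrow 0$ and $\eps\downarrow 0$ along rational sequences finishes the proof.

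The main obstacle to keep in mind is that \eqref{eq:stoch_approx} controls the increments of $Y_n$ only from above, so there is no bound on downward moves and the argument must never invoke one. That is exactly why the trapping step looks back to the \emph{last} visit of $Y$ to level $\leq y_0+\eps$ before time $n$, rather than to a fixed reference time: this guarantees the drift is negative throughout the intervening block, so the only errors are the uniformly small tails of $M_n$ and $R_n$ plus the single $\gamma_m g(Y_m)$ term. Making the uniformity of those tails (the Cauchy form of the a.s.\ convergences in the first paragraph) precise is the one genuinely technical point.
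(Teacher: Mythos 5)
Your proposal is correct and follows essentially the same route as the paper's proof: the same decomposition into the martingale $M_n=\sum_{k<n}\gamma_k\xi_{k+1}$ and remainder $R_n$, almost sure convergence of both, the nonnegativity of $Y_n$ to rule out a permanent excursion above $y_0$, and a trapping argument combining uniformly small noise tails with the small single-step drift term $\gamma_m g(Y_m)$. The only difference is cosmetic bookkeeping (you look back to the last visit below $y_0+\eps$, whereas the paper tracks the exit and return times $\kappa_n$), so no further comparison is needed.
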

  \begin{proof}
Summing both sides of \eqref{eq:stoch_approx} we obtain
$Y_n - Y_0 \leq M_n + A_n$ for any $n \in \ZP$, where
  \[ M_n = \sum_{k=0}^{n-1} \gamma_k \xi_{k+1} , ~~\text{and}~~ A_n = \sum_{k=0}^{n-1} \gamma_k \left( g (Y_k) + r_k \right) .\]
  Note $M_n$ is a  $\cG_n$-martingale and $A_n$ is $\cG_{n-1}$ measurable; $M_n +A_n$ is essentially
the Doob decomposition of the process whose increments are the right-hand side of  \eqref{eq:stoch_approx}.
By (i),
  \[  \Exp [ M_{n+1}^2 - M_n^2 \mid \cG_n ] = \Exp [ ( M_{n+1} - M_n )^2 \mid \cG_n ] \leq C \gamma_{n}^2 , \as, \]
  which is summable, by (ii), so the increasing process associated with $M_n$ is a.s.\ bounded. Hence $M_n \to M_\infty$
  a.s., for some finite limit $M_\infty$. Also, writing $R_n = \sum_{k=0}^{n-1} \gamma_k r_k$, we have $R_n \to R_\infty$ a.s.\
  for some finite limit $R_\infty$, by (ii).
  In particular,  for any $\eps >0$, there exists an a.s.~finite $N$ such that, 
  \[ \max_{n \geq N} \max_{m \geq 0} | M_{n+m} - M_n | \leq  \eps/4  , ~~\text{and}~~
   \max_{n \geq N} \max_{m \geq 0} | R_{n+m} - R_n | \leq  \eps/4  .\]
  Consider some $n \geq N$ for which $Y_n > y_0$. Let $\kappa_n$ be the first time
  after $n$ for which $Y_{\cdot} \leq y_0$. Then, by summing \eqref{eq:stoch_approx} again, for $m \geq 0$,
   \begin{align*}
   Y_{(n+m)\wedge \kappa_n} - Y_n & \leq M_{(n+m)\wedge \kappa_n} - M_n + R_{(n+m)\wedge \kappa_n} - R_n + \sum_{k=n}^{(n+m)\wedge \kappa_n-1} \gamma_k g (Y_k) \\
   & \leq  \frac{\eps}{2}  - \delta \sum_{k=n}^{(n+m)\wedge \kappa_n -1} \gamma_k .
   \end{align*}
In particular,  on $\{ \kappa_n = \infty \}$, letting $m \to \infty$ the left-hand side of the last display remains bounded
below by $-Y_n$ while the right-hand side tends to $-\infty$, by (ii); hence $\kappa_n < \infty$ a.s.,
and the process returns to the interval $[0, y_0]$ without 
exceeding $Y_n + \eps$.
Moreover,  
\[ Y_{n+1} - Y_n \leq \frac{\eps}{2} + \gamma_n g (Y_n) < \eps ,\]
for all $n\geq N$ large enough, since $g$ is bounded and $\gamma_n \to 0$.
 
Hence $Y_n \leq y_0$ infinitely often,
and, for all but finitely many such $n$, 
any exit from $[0,y_0]$ cannot exceed $y_0 +\eps$; but starting
from $[y_0, y_0 + \eps]$ the process returns to $[0,y_0]$
before reaching $y_0 + 2\eps$. Hence $\limsup_{n \to \infty} Y_n \leq y_0 + 2 \eps$, a.s.
Since $\eps>0$ was arbitrary, the result follows.
  \end{proof}

Now we can complete the proof of   Theorem \ref{thm:int1}.

\begin{proof}[Proof of Theorem \ref{thm:int1}.]
We   apply Lemma \ref{lem:stoch_approx} to \eqref{qstau}, with $\cG_n = \tilde \cF_n$,
$Y_n = \hier[q]{n}{k+1}$, $\gamma_n = \frac{1}{n+2}$,
$r_n = 0$, 
\[ g(y) = C_6 \re^ { - u_1 k^\gamma }  +   \tau_k C_5 k^{\beta}  - y  (1 + C_5 k^{\beta} ) ,
~~\text{and}~~ \xi_{n+1} = (n+2) \left( \hier[q]{n+1}{k+1} - \Exp [ \hier[q]{n+1}{k+1} \mid \tilde \cF_n ] \right).\]
Note that, since $N^{\rm GPA}_{n} (k)$ is $\tilde \cF_n$-measurable,
\begin{align*}
 \xi_{n+1} & = N^{\rm GPA}_{n+1} (k+1)  - \Exp [ N^{\rm GPA}_{n+1} (k+1) \mid \tilde \cF_n ] \\
& = N^{\rm GPA}_{n+1} (k+1) - N^{\rm GPA}_{n} (k+1) - \Exp [  N^{\rm GPA}_{n+1} (k+1) - N^{\rm GPA}_{n} (k+1) \mid \tilde \cF_n ] ,\end{align*}
which is uniformly bounded, since $0 \leq N^{\rm GPA}_{n+1} (k) - N^{\rm GPA}_{n} (k)  \leq 1$, a.s.
Hence the conditions of Lemma \ref{lem:stoch_approx} are satisfied for any
\[ y_0 > \frac{C_6 \re^ { - u_1 k^\gamma }  +   \tau_k C_5 k^{\beta}}{1 + C_5 k^{\beta} } ,\]
and we deduce that
 \begin{equation}
\label{tau}
\limsup_{n\to\infty}
\hier[q]{n}{k+1}
\leq            
\frac{ C_6 \re^ { - u_1 k^\gamma } + \tau_k C_5 k^{\beta}}{1+C_5 k^{\beta}}.
\end{equation} 
In particular, if $\hier[q]{n}{k} \leq \tau_k$ for all but finitely many $n$, a.s., then \eqref{tau} 
implies that 
$\hier[q]{n}{k+1} \leq \tau_{k+1}$ for all but finitely many $n$, a.s., where 
\begin{equation}
\label{eq:tau_iteration}
 \tau_{k+1} = \frac{ 2 C_6 \re^ { - u_1 k^\gamma } + \tau_k  C_5 k^{\beta}}{1+C_5 k^{\beta}} ;\end{equation}
the appearance of the factor of $2$ in \eqref{eq:tau_iteration}
 accounts for the fact that \eqref{tau} is a $\limsup$ statement, and
we want a bound for all but finitely many $n$.

Now we iterate \eqref{eq:tau_iteration}. We may rewrite \eqref{eq:tau_iteration} as
\[ \tau_{k+1} - \tau_k = \frac{1}{1+C_5 k^\beta} \left( 2 C_6 \re^{-u_1 k^\gamma} - \tau_k \right) .\]
Then, defining $\sigma_k > 0$ via $\tau_k = 2 C_6 \sigma_k \re^{-u_1 k^\gamma}$,
we obtain, after some algebra,
\[ \sigma_{k+1} - \sigma_k = \left( 1 - a_{k+1} + \frac{a_{k+1}}{1+C_5 k^\beta} \right) (1- \sigma_k) - (1-a_{k+1} ),\]
where
\[ a_{k+1} := \exp \left\{ -u_1 \left( (k+1)^\gamma - k^\gamma  \right) \right\} = 1 + \gamma u_1 k^{\gamma -1} + O (k^{\gamma -2} ),\]
as $k \to \infty$. Then,
assuming that $\beta < 1 -\gamma$, it is straightforward to check that, as $k \to \infty$,
\[ 1 - a_{k+1} + \frac{a_{k+1}}{1+C_5 k^\beta} \sim \frac{1}{C_5 k^\beta} .\]
Hence we may apply Lemma 1 of \cite{jordan2006}
to see that $\lim_{k \to \infty} \sigma_k = 1$, provided
$\beta < 1- \gamma$, i.e.,
$\gamma < \frac{s-d}{2s-d}$.
For any such $\gamma$, we thus obtain
 $\limsup_{n \to \infty} \hier[q]{n}{k} \leq 3 C_6 \re^{- u_1 k^\gamma}$, a.s.,
giving the almost sure statement in the theorem.

Then the reverse Fatou lemma yields the statement on expectations.
\end{proof}

\section{Proofs for the on-line nearest-neighbour graph}
\label{sec:ongproof}

In this section we work towards a proof of Theorem \ref{ongdeg} on the degree sequence
of the on-line nearest-neighbour graph. Our argument extends
 the 2-dimensional argument of \cite[\S 3.1]{bbbcr}, who considered the uniform distribution on the square.

Recall the definition of the Voronoi cell $\cV_n(i)$ from \eqref{voronoi}.
Then
\begin{equation}
\label{nesting}
\cV_{n+1} (i) = \cV_n (i) \cap \{ x \in S : \rho (x , X_i) < \rho (x , X_{n+1} ) \} \subseteq \cV_n (i) .\end{equation}

A key fact is provided by the following lemma, which will be used to show that the volume of a Voronoi cell associated
with a vertex in the ONG
shrinks, on average, by a positive fraction whenever a new vertex lands in the cell.

\begin{lemma}
\label{shrink}
Let $R \subseteq S$ be convex, and let $X$ be a random point in $S$ distributed according
to the  probability density $f$ satisfying \eqref{fcon}.
For $x_0 \in R$, let $R' = \{ x \in R : \rho (x, x_0 ) < \rho (x, X)\}$. Then there exists $\delta >0$ not depending on $R$
or $x_0$
such that
\[ \Exp [ | R' |  \mid X \in R ] \leq (1- \delta) \Exp [ | R |] .\]
\end{lemma}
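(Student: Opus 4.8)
The plan is to reduce the claim to a purely geometric inequality for convex sets and then settle that by a homothety (scaling) argument; the density $f$ enters only through the constants in \eqref{fcon}. Since replacing $R$ by its closure changes neither side of the asserted inequality, I may assume $R$ is compact; write $D := \mathrm{diam}(R)$, and assume $D > 0$ (otherwise $|R| = 0$ and there is nothing to prove). As $R$ is deterministic, $\Exp[|R|] = |R|$, and writing $R$ as the disjoint union $R' \cup (R \setminus R')$ with $R \setminus R' = \{ x \in R : \rho(x, x_0) \geq \rho(x, X) \}$, the claim becomes $\Exp[ |R \setminus R'| \mid X \in R ] \geq \delta |R|$. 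First I would expand this conditional expectation: given $X \in R$, the point $X$ has density $f(y)/\Pr[X \in R]$ on $R$, so, discarding the measure-zero bisector hyperplane $\{ x : \rho(x,x_0) = \rho(x,y) \}$ (legitimate for $y \neq x_0$, and $X = x_0$ has probability $0$),
\[ \Exp[ |R \setminus R'| \mid X \in R ] = \frac{1}{\Pr[X \in R]} \int_R f(y) \, |\{ x \in R : \rho(x,y) < \rho(x, x_0) \}| \, \ud y . \]
The key step is then Fubini's theorem applied to this integral over $R \times R$: since $\{ y : \rho(x,y) < \rho(x,x_0) \} = B(x ; \rho(x,x_0))$, the right-hand side equals $\Pr[X \in R]^{-1} \int_R \Pr[ X \in R \cap B( x ; \rho(x,x_0) ) ] \, \ud x$. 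Writing $\lambda_0 := \inf_{x \in S} f(x)$ and $\lambda_1 := \sup_{x \in S} f(x)$ (so $0 < \lambda_0 \leq \lambda_1 < \infty$ by \eqref{fcon}) and using $\Pr[X \in R] \leq \lambda_1 |R|$, it then suffices to prove the geometric inequality
\[ \int_R |R \cap B( x ; \rho(x, x_0) )| \, \ud x \geq c_d |R|^2 \]
for a constant $c_d > 0$ depending only on $d$; one then takes $\delta := c_d \lambda_0 / \lambda_1$.

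For the geometric inequality I would use the elementary observation that if $K \subseteq \R^d$ is convex, $p \in K$ and $\theta \in [0,1]$, then the homothetic copy $K_\theta := (1-\theta) p + \theta K$ satisfies $K_\theta \subseteq K$ (by convexity), $K_\theta \subseteq B( p ; \theta \, \mathrm{diam}(K) )$, and $|K_\theta| = \theta^d |K|$. Applying this with $K = R$, $p = x$, $\theta = \rho(x, x_0)/D \in [0,1]$ (valid since $\rho(x,x_0) \leq D$ as $x, x_0 \in R$) gives $|R \cap B(x ; \rho(x,x_0))| \geq (\rho(x,x_0)/D)^d |R|$, whence
\[ \int_R |R \cap B( x ; \rho(x, x_0) )| \, \ud x \geq \frac{|R|}{D^d} \int_R \rho(x, x_0)^d \, \ud x . \]
It remains to bound $\int_R \rho(x, x_0)^d \, \ud x$ from below by a dimensional constant times $D^d |R|$. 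For this, pick a diametral pair $a, b \in R$ with $\rho(a,b) = D$; the triangle inequality forces $\max\{ \rho(a, x_0), \rho(b, x_0) \} \geq D/2$, say $\rho(a, x_0) \geq D/2$. Applying the homothety observation a second time, now with $K = R$, $p = a$, $\theta = 1/4$, produces a set $R_{1/4} \subseteq R$ with $|R_{1/4}| = 4^{-d} |R|$ on which $\rho(x, a) \leq D/4$, hence $\rho(x, x_0) \geq D/2 - D/4 = D/4$. Therefore $\int_R \rho(x, x_0)^d \, \ud x \geq (D/4)^d \cdot 4^{-d} |R| = 16^{-d} D^d |R|$, giving the geometric inequality with $c_d = 16^{-d}$ and hence the lemma with $\delta = \lambda_0 / ( 16^d \lambda_1 ) \in (0,1)$, a constant not depending on $R$ or $x_0$.

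I do not anticipate a real obstacle. The one place to take care is the Fubini/conditioning step --- correctly using the conditional law of $X$ given $X \in R$ and discarding the null bisector sets --- along with verifying the three homothety facts. The homothety device is what makes the geometric estimate painless: since every bound is relative to $|R|$ and $D = \mathrm{diam}(R)$ rather than to absolute scales, one never has to control the shape of $R$, and there is no degeneration for thin convex sets (unlike the constant $\delta_S$ of Lemma \ref{balls}, which is tied to the fixed body $S$).
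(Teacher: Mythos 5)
Your proof is correct, but it goes by a genuinely different route from the paper's. The paper fixes $x_0$ as the origin, partitions $R$ into its $2^d$ orthant pieces $R_j$, and observes that when $X \in R_j$ the half-scale translate $\tfrac12(X + R_j)$ lies inside $R_j$ and consists of points at least as close to $X$ as to $x_0$, hence is removed from $R'$; averaging over $j$ and applying Jensen's inequality to $\sum_j |R_j|^2$ gives an expected loss of at least $2^{-2d}(\lambda_0/\lambda_1)\,|R|$. You instead unwrap the conditional expectation and use Tonelli to reduce everything to the deterministic integral $\int_R |R \cap B(x; \rho(x,x_0))| \,\ud x$, which you bound below by $16^{-d}|R|^2$ via two homothety estimates relative to $\mathrm{diam}(R)$ (one centred at $x$, one at a diametral endpoint far from $x_0$); all the steps check out, including the conditioning/bisector null-set point and the harmless replacement of open by closed balls in the inclusion $K_\theta \subseteq B(p;\theta\,\mathrm{diam}(K))$, and the degenerate case $|R|=0$ is vacuous either way since then $\Pr[X\in R]=0$. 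What each approach buys: yours avoids any case analysis on where $X$ lands and isolates a clean, purely geometric inequality for convex bodies; the paper's midpoint-and-orthants trick is shorter and, more importantly, yields the sharper constant $\delta = 2^{-2d}\lambda_0/\lambda_1$ rather than your $16^{-d}\lambda_0/\lambda_1$. The lemma as stated only needs some $\delta>0$, but the paper later exploits the explicit value (taking $\delta = 2^{-2d}$ when $f$ is uniform) to obtain the constant $\tfrac12\log(1+(2^{2d}-1)^{-1})$ in \eqref{eq:lower_bound_for_mu} and hence in \eqref{eq:rho_lower_bound}; substituting your proof would weaken that quantitative lower bound to $\tfrac12\log(1+(16^{d}-1)^{-1})$, though all qualitative conclusions of Theorem \ref{ongdeg} would stand.
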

\begin{proof}
Without loss of generality, suppose that $x_0 = 0 \in R$.
Partition $R$ according to the $2^d$ Cartesian orthants as $R_1, \ldots, R_{2^d}$. In any orthant $j$, any
 two points
$x, y \in R_j$ have the same signs in corresponding coordinates, so $\| x - y \| \leq \| x+ y \|$, and hence
$(x+y)/2$ is closer to $x$ (and to $y$) than to $0$. Thus, given $X \in R_j$,
 any point $x$ of
$R_j'' := \{  (X+y)/2 : y \in R_j \}$ has $\| x - X \| \leq \| x - 0 \|$, and, by convexity,
$R_j'' \subseteq R_j$. Hence, given $X \in R_j$,
$ R' \subseteq   R \setminus   R_j''$.
By construction, $  R_j''$ is a translate of $R_j$ scaled by a factor of $1/2$, so
\begin{align*}
 \Exp [  | R' | \mid X \in R ] & \leq   | R |     - \sum_{j=1}^{2^d} 2^{-d} | R_j | \Pr [ X \in R_j \mid X \in R ]  \\
 & \leq |R| - 2^{-d} (\lambda_0/\lambda_1) |R|^{-1} \sum_{j=1}^{2^d} |R_j|^2 ,
\end{align*}
where $\lambda_0 = \inf_{x \in S} f(x) >0$ and $\lambda_1 = \sup_{x \in S} f(x) < \infty$, by \eqref{fcon}.
Now, by Jensen's inequality, $\sum_{j=1}^{2^d} | R_j|^2 \geq 2^{-d} ( \sum_{j=1}^{2^d} | R_j | )^2 = 2^{-d} | R|^2$,
and the claimed result follows with $\delta = 2^{-2d} \lambda_0 /\lambda_1$.
\end{proof}

Next we give bounds on expectations for $N_n^\mathrm{ONG} (k)$.

\begin{lemma}
\label{expdeg}
Let $d \in \N$. Suppose that \eqref{fcon} holds.
Then there exist finite positive constants $A,A',C,C'$ such that, for all $k \in \N$,
\begin{equation}
\label{eq:two_sided_bounds}
 A' \re^{-C'k} \leq \liminf_{n \to \infty} n^{-1} \Exp [ N^\mathrm{ONG}_n (k) ] \leq \limsup_{n \to \infty} n^{-1} \Exp [ N^\mathrm{ONG}_n (k) ]
 \leq A \re^{-Ck} .\end{equation}
Moreover,
\begin{equation}
\label{eq:upper_bound_for_mu}
\limsup_{k \to \infty} \left(
- k^{-1} \log \left( \liminf_{n \to \infty} n^{-1} \Exp [ N^\mathrm{ONG}_n (k) ] \right) \right) \leq 1 ,
\end{equation}
and, in the case where $f$ is the uniform density on $S$,
\begin{equation}
\label{eq:lower_bound_for_mu}
\liminf_{k \to \infty} \left(
- k^{-1} \log \left( \limsup_{n \to \infty} n^{-1} \Exp [ N^\mathrm{ONG}_n (k) ] \right) \right) \geq 
\frac{1}{2} \log \left( 1 + (2^{2d} -1)^{-1} \right) .
\end{equation}
\end{lemma}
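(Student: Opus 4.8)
The plan is to treat the upper bounds (the right-hand inequality in \eqref{eq:two_sided_bounds}, and \eqref{eq:lower_bound_for_mu}) by a supermartingale attached to a single vertex, and the lower bounds (the left-hand inequality in \eqref{eq:two_sided_bounds}, and \eqref{eq:upper_bound_for_mu}) by locating arrival times at which the in-degree typically reaches level $k$.

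For the upper bound, fix a vertex $i$ and let $D_j(i)$ denote its in-degree in the ONG on $\cX_j$, so that $D_i(i)=0$ and $\deg_n(i)\le 1+D_n(i)$. The vertex added at time $j+1$ attaches to $i$ precisely when $X_{j+1}\in\cV_j(i)$, and on this event Lemma \ref{shrink}, applied conditionally on $\cX_j$ with the convex set $R=\cV_j(i)$ and the point $x_0=X_i$, gives $\Exp[\,|\cV_{j+1}(i)|\mid X_{j+1}\in\cV_j(i),\,\cX_j\,]\le(1-\delta)|\cV_j(i)|$, while if $X_{j+1}\notin\cV_j(i)$ the cell can only shrink, by the nesting \eqref{nesting}. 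Hence $W_j:=(1-\delta)^{-D_j(i)}|\cV_j(i)|$ is a supermartingale for $j\ge i$, so that $\Exp W_n\le\Exp W_i=\Exp|\cV_i(i)|=|S|/(i+1)$ by exchangeability of $X_0,\dots,X_i$. Combining the Markov estimate $\Pr[D_n(i)\ge k-1,\ |\cV_n(i)|\ge z]\le(1-\delta)^{k-1}\Exp W_n/z$ with the small-cell bound \eqref{626b} and optimising over $z>0$ gives $\Pr[D_n(i)\ge k-1]\le C(1-\delta)^{(k-1)/2}\sqrt{n/(i+1)}$, hence the same bound for $\Pr[\deg_n(i)\ge k]$; summing over $i$ with $\sum_{i=0}^n(i+1)^{-1/2}=O(\sqrt n)$ then yields $n^{-1}\Exp[N^{\mathrm{ONG}}_n(k)]\le C'(1-\delta)^{(k-1)/2}$ uniformly in $n$. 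This proves the upper bound in \eqref{eq:two_sided_bounds} with $C=-\tfrac12\log(1-\delta)$, and inserting the value $\delta=2^{-2d}\lambda_0/\lambda_1$ furnished by Lemma \ref{shrink}, together with $\lambda_0=\lambda_1$ for the uniform density (so that $1-\delta=(2^{2d}-1)/2^{2d}$), reproduces the constant in \eqref{eq:lower_bound_for_mu}.

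For the lower bounds one begins from the exact identity $\Exp[D_n(i)]=\sum_{j=i+1}^{n}1/j=H_n-H_i$, where $H_m=\sum_{\ell=1}^m 1/\ell$, valid for every density $f$: since $X_0,\dots,X_{j-1}$ are i.i.d., the index $\eta_1(j)$ of the on-line nearest neighbour of $X_j$ is uniform on $\{0,\dots,j-1\}$, whence $\Pr[\eta_1(j)=i]=1/j$. The plan is then to choose the arrival time $i=i(n,k)$ so that $H_n-H_i$ exceeds $k-1$ by a margin growing faster than $\sqrt k$ --- concretely, $i$ of order $n\exp\{-k-k^{3/4}\}$ --- and to show that $D_n(i)$ concentrates around its mean; then $\Pr[\deg_n(i)\ge k]=\Pr[D_n(i)\ge k-1]\ge\tfrac12$ for all large $n$, and summing over the (order $n\exp\{-k-k^{3/4}\}$) admissible $i$ gives $\liminf_{n\to\infty}n^{-1}\Exp[N^{\mathrm{ONG}}_n(k)]\ge c\exp\{-k-k^{3/4}\}$. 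This is far more than the left-hand inequality in \eqref{eq:two_sided_bounds}, and since $-k^{-1}\log\bigl(c\exp\{-k-k^{3/4}\}\bigr)\to1$ as $k\to\infty$ it also yields \eqref{eq:upper_bound_for_mu}; monotonicity of $N_n(k)$ in $k$ covers the finitely many small $k$ for which the concentration step is not yet in force. The concentration is where the work lies: writing $D_n(i)=\sum_{j=i+1}^{n}\1\{\eta_1(j)=i\}$, it suffices to show that the variance of $D_n(i)$ is $O(\Exp D_n(i))$, and this would follow from the joint bound $\Pr[\eta_1(j)=\eta_1(l)=i]=O(1/(jl))$ for $i<j<l$. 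The Voronoi cell shrinkage re-enters here: conditioning on vertex $j$ attaching to $i$ both hits the cell $\cV_{j-1}(i)$ and, via Lemma \ref{shrink}, contracts it, and one must show that the cumulative effect of the attachments between times $j$ and $l$ drives the $f$-measure of the cell of $X_i$ down from order $1/j$ to order $1/l$, with \eqref{626b} used to rule out atypically small cells; Chebyshev's inequality then closes the argument.

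The main obstacle, I expect, is precisely this second-moment control of $D_n(i)$: turning the heuristic that the $f$-mass of the Voronoi cell of $X_i$ decays like $1/m$ under the dynamics into the quantitative estimate $\Pr[\eta_1(j)=\eta_1(l)=i]=O(1/(jl))$. This is the point at which one must iterate, rather than merely invoke once, the single-step shrinkage of Lemma \ref{shrink}, and where the passage from the uniform density on the square (as in \cite{bbbcr}) to a general $f$ satisfying \eqref{fcon} on a general compact convex $S$ needs real care. The supermartingale half of the proof should, by contrast, be routine once Lemma \ref{shrink} and \eqref{626b} are available.
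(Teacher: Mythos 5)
Your upper-bound half is fine: the supermartingale $W_j=(1-\delta)^{-D_j(i)}|\cV_j(i)|$ is just a repackaging of the paper's argument, which iterates Lemma \ref{shrink} along the successive attachment times $t_1,t_2,\ldots$ to get $\Exp[|\cV_{t_k}(i)|]\leq(1-\delta)^k/(i+1)$ and then combines Markov's inequality with \eqref{626b} and optimises over $z$, exactly as you do; the constant in \eqref{eq:lower_bound_for_mu} comes out the same way ($\delta=2^{-2d}$ when $\lambda_0=\lambda_1$). The genuine problem is in your lower-bound half. You need $\Pr[\deg_n(i)\geq k]\geq\tfrac12$ for $i$ of order $n\exp\{-k-k^{3/4}\}$, i.e.\ concentration of $D_n(i)$ around its mean at scale $k^{3/4}$, and you claim this would follow from $\Pr[\eta_1(j)=\eta_1(l)=i]=O(1/(jl))$. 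That implication is false: with an unspecified constant $C>1$ this joint bound only gives $\Exp[D_n(i)^2]\leq \Exp D_n(i)+C(\sum_{j}1/j)^2$, i.e.\ $\mathrm{Var}(D_n(i))=O((\Exp D_n(i))^2)=O(k^2)$, and Chebyshev at scale $k^{3/4}$ (or at any scale $o(k)$, which you are forced into because \eqref{eq:upper_bound_for_mu} requires the mean to be $(1+o(1))k$) then gives nothing. What you actually need is a decorrelation statement of the form $\Pr[\eta_1(l)=i\mid\eta_1(j)=i]\leq\frac{1}{l}(1+\eps_{j,l})$ with $\sum_{j<l}\eps_{j,l}/(jl)$ of smaller order than $k^{3/2}$; note that conditioning on $\{\eta_1(j)=i\}$ size-biases the cell $\cV_{j-1}(i)$ and inflates its expected $f$-measure by a constant factor, so the required estimate is that this bias washes out as $l/j\to\infty$, which is plausible but is precisely the hard quantitative step, and neither it nor the iterated-shrinkage heuristic you sketch is supplied.

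The paper avoids this difficulty entirely. It never proves concentration of $\deg_n(i)$: for $i\in H^\theta_{n,k}=\N\cap[1,\re^{-\theta k}n]$ (so $d_n(i)\geq\theta k$ with $\theta>1$ fixed) it lower-bounds $\Pr[\deg_n(i)\geq\theta^{-1}d_n(i)]$ by a \emph{constant}, using the truncation $w\,d_n(i)\Pr[\deg_n(i)\geq\theta^{-1}d_n(i)]\geq(1-\theta^{-1})d_n(i)-\Exp[\deg_n(i)\1\{\deg_n(i)>w\,d_n(i)\}]$ and controlling the last term by Cauchy--Schwarz together with the exponential tail bound \eqref{d1} already proved in the upper-bound half (which gives $\Exp[\deg_n(i)^2]=O(\sqrt{n/i})$ and a superpolynomially small tail beyond $w\,d_n(i)$ for $w$ large). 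Summing over $i\in H^\theta_{n,k}$ gives $\liminf_n n^{-1}\Exp[N^\mathrm{ONG}_n(k)]\geq c(\theta)\re^{-\theta k}$; taking $\theta=2$ gives the left inequality in \eqref{eq:two_sided_bounds}, and letting $\theta\downarrow1$ gives \eqref{eq:upper_bound_for_mu}. So a constant-probability lower bound suffices and second-moment decorrelation of the attachment indicators is never needed. To repair your proof you should either replace the Chebyshev step by such a truncated-first-moment (Paley--Zygmund-type) argument fed by your own bound \eqref{d1}, or else actually prove the sharp pairwise decorrelation estimate, which is substantially harder than anything else in the lemma.
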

\begin{proof}
First we prove the upper bound in \eqref{eq:two_sided_bounds}, using an argument based in part on \cite[\S 3.1]{bbbcr}.
By \eqref{fcon},  $\inf_{x \in S} f(x) = \lambda_0 >0$ and $\sup_{x \in S} f(x) = \lambda_1 < \infty$.
Fix $i \in \ZP$. Let $t_0 = i$ and for $j \in \N$ define recursively
$t_j = \min \{ t > t_{j-1} : X_t \in \cV_{t-1} (i) \}$,
so that $t_1, t_2, \ldots$ are the times at which edges to $X_i$ are created.
Following \cite[p.\ 311]{bbbcr}, let $\cW_j = \cV_{t_j} (i)$.

Observe that if $i$ has degree greater than $k$ in the ONG on $(X_0, \ldots, X_n)$, $n \geq i$,
then necessarily $t_k \leq n$, and so also $| \cV_n (i) | \leq | \cV_{t_k} (i) | = | \cW_k |$, by \eqref{nesting}.
Hence, for any $z>0$,
\begin{equation}
\label{626} \Pr [ \deg_n (i) > k ] \leq \Pr [ | \cW_k | \geq z ] + \Pr [ | \cV_n (i)| \leq z ] .\end{equation}
We bound each of the probabilities on the right-hand side of \eqref{626} in turn, and then optimize the choice of $z$.

By definition,
$X_{t_j}$ is distributed according to the density $f$, conditioned to fall in the convex set $\cV_{t_{j}-1} (i) \subseteq \cV_{t_{j-1}} (i) \subseteq S$. 
Hence Lemma \ref{shrink} 
shows that
$\Exp [ | \cW_{j} | ] \leq (1-\delta) \Exp [ | \cW_{j-1}| ]$, 
where $\delta \in (0,1)$ depends only on $d$ and $\lambda_0/\lambda_1$, and
\[ \Exp [ | \cW_j | ] \leq (1-\delta )^j \Exp [ | \cV_i (i) | ] = \frac{1}{i+1} (1-\delta )^j ,\]
since the vector $(|\cV_i(0)|, \ldots, |\cV_i (i)|)$ is exchangeable and its components sum to $1$, so $\Exp [ | \cV_i (j) | ] = \frac{1}{i+1}$.
Markov's inequality   implies that, for any $z >0$,
\begin{equation}
\label{626a}
 \Pr [ | \cW_j | \geq  z ] \leq \frac{1}{z} \frac{1}{i+1} (1-\delta )^j .\end{equation}

The final term in \eqref{626} is bounded above by \eqref{626b}.
Combining \eqref{626} with \eqref{626a} and \eqref{626b}, we obtain, for any $z>0$,
\[
 \Pr [ \deg_n (i) > k ] \leq \frac{1}{z} \frac{1}{i+1} (1-\delta)^k + C n z  ,\]
where  $C < \infty$ depends only on $d$, $S$,  and $\lambda_1$.
The optimal bound is obtained on taking $z = (1-\delta)^{k/2} / \sqrt{ C n (i+1)}$, and we conclude
\begin{equation}
\label{d1} \Pr [ \deg_n (i) > k ] \leq 2 (1-\delta)^{k/2} \sqrt{ \frac{C n}{i+1} } .\end{equation}
The upper bound in \eqref{eq:two_sided_bounds}
follows from \eqref{d1}, since
\[ \Exp [ N_n^\mathrm{ONG}(k) ] = \sum_{i=0}^n \Pr [ \deg_n (i) \geq k ] \leq C' n (1-\delta)^{k/2} ,\]
for some $C' < \infty$ not depending on $k$ or $n$. 
The statement \eqref{eq:lower_bound_for_mu} also follows, since when $\lambda_0 = \lambda_1$, we have from
the proof of Lemma \ref{shrink} that we may take $\delta = 2^{-2d}$.

To prove the lower bound in \eqref{eq:two_sided_bounds} as well as \eqref{eq:upper_bound_for_mu}, we use a similar idea
to that briefly outlined for the analogous argument
in \cite[p.\ 311]{bbbcr}, but filling in the details takes some work,
and we must be more careful with our estimates to obtain the quantitative bound \eqref{eq:upper_bound_for_mu}.
First note that, for $j >i$,
the (unconditional) probability that $X_j$ is joined to $X_i$ is
$\Pr [ \eta_1 (j) = i ] = \Pr [ X_j \in \cV_{j-1} (i) ] = 1/j$. Write $ d_n (i) := \Exp [ \deg_n (i) ]$. Then, for $i \in \N$,
\[ d_n (i) = 1 + \sum_{j=i+1}^n \Pr [ \eta_1 (j) = i ] \geq \sum_{j=i}^n \frac{1}{j} \geq \int_i^n \frac{1}{y} \ud y = \log ( n / i)  .\]
Let $\theta >1$.
For $k \in \ZP$, let $H^\theta_{n,k} := \N \cap [ 1, \re^{-\theta k} n ]$.
Then for any $i \in H^\theta_{n,k}$, $d_n (i) \geq \log (n/i) \geq \theta k$. It follows that
\begin{equation}
\label{lower}
 \Exp [ N_n^\mathrm{ONG}(k) ] \geq \sum_{i \in   H^\theta_{n,k}}  \Pr [ \deg_n (i) \geq k ] \geq \sum_{i \in   H^\theta_{n,k}} \Pr [ \deg_n (i) \geq \theta^{-1} d_n (i)  ] .\end{equation}

 Let $w \in (1,\infty)$, to be specified later.
Then $w > 1 > 1/\theta$, and
\begin{align}
\label{H1}
w d_n (i) \Pr [ \deg_n (i) \geq \theta^{-1} d_n (i) ] 
& \geq 
\Exp [ \deg_n (i) \1 \{ \deg_n (i) \geq \theta^{-1} d_n (i) \} ] \nonumber\\
&{} \qquad \qquad {} - \Exp [ \deg_n (i) \1 \{ \deg_n (i) > w d_n (i) \} ] \nonumber\\
& \geq \left(1 - \theta^{-1} \right) d_n (i) - \Exp [ \deg_n (i) \1 \{ \deg_n (i) > w d_n (i) \} ] ,\end{align}
using the fact that $\Exp [ X \1 \{ X \geq x \} ] \geq \Exp [ X ] -x$ for any $x \geq 0$ and any nonnegative random variable $X$.
By the Cauchy--Schwarz inequality, the final term in \eqref{H1} satisfies
\begin{equation}
\label{H2}
  \Exp [ \deg_n (i) \1 \{ \deg_n (i) > w d_n (i) \} ] \leq \left( \Exp [   \deg_n (i) ^2 ] \Pr [ \deg_n (i) > w d_n (i)  ] \right)^{1/2} .\end{equation}
We claim that, given $\theta >1$, there exists $w = w(\theta) \in (1,\infty)$ such that
\begin{equation}
\label{Hbound}
  \sup_{ i \in H^\theta_{n,k} } \left( \Exp [   \deg_n (i)  ^2 ] \Pr [ \deg_n (i) > w d_n (i)  ] \right)^{1/2}
 \leq \re^{-\theta k}, \text{ for all } n \in \N \text{ and all } k \in \N.
\end{equation}
Given \eqref{Hbound}, which we verify at the end of this proof,
we
 obtain from \eqref{H1}, \eqref{H2}, and \eqref{Hbound}
that, for any $n \in \N$ and any $k \in \N$,
\begin{align}
\label{eq:theta_bound}
 w \inf_{i \in H^\theta_{n,k} }  \Pr [ \deg_n (i) \geq \theta^{-1} d_n (i) ] & \geq 
\left(1 - \theta^{-1} \right)  -  \re^{- \theta k} \sup_{ i \in H^\theta_{n,k}} \frac{1}{d_n (i)} \nonumber\\
& \geq \left(1 - \theta^{-1} \right) -  \frac{\re^{- \theta k}}{\theta k}  ,\end{align}
using the fact that $d_n (i) \geq \theta k$ for $i \in H^\theta_{n,k}$.
To prove the lower bound in \eqref{eq:two_sided_bounds}, it is enough to fix $\theta =2$. Then \eqref{eq:theta_bound}
becomes, for any $n \in \N$ and any $k \in \N$,
\[  w \inf_{i \in H^2_{n,k} }  \Pr [ \deg_n (i) \geq \tfrac{1}{2} d_n (i) ] \geq \frac{1}{2} \left(1 - \re^{-2} \right)
 \geq \frac{3}{8}, \]
say, where $w = w(2)$ is constant. Hence from \eqref{lower} we obtain, for all $n \in \N$ and all $k \in \N$,
\[ \Exp [ N^\mathrm{ONG}_n (k) ] \geq w^{-1}
\sum_{i \in   H^2_{n,k}}  \frac{3}{8} 
\geq \frac{3}{8w} \left( \re^{-2k} n -1 \right), \]
 which gives $\liminf_{n \to \infty} n^{-1} \Exp [ N_n^\mathrm{ONG}(k) ] \geq \tfrac{3}{8w} \re^{-2k}$.

To prove \eqref{eq:upper_bound_for_mu}, we adapt the preceding argument. For any $\theta >1$, there exists $k_0 \in \N$
such that, for all $k \geq k_0$,
the final expression on the right-hand side of \eqref{eq:theta_bound}
exceeds $\frac{1-\theta^{-1}}{2} >0$, say. Then, similarly to before, we obtain, for all $k \geq k_0$ and $n \in \N$,
\[   \Exp [ N^\mathrm{ONG}_n (k) ] \geq 
w^{-1} \sum_{i \in   H^\theta_{n,k}}  \left( \frac{1-\theta^{-1}}{2} \right) 
\geq w^{-1} \left( \frac{1-\theta^{-1}}{2} \right)   \left( \re^{- \theta k} n -1 \right) .\]
First letting $n \to \infty$ and then $k \to \infty$, it follows that
\[ \limsup_{k \to \infty} \left(
- k^{-1} \log \left( \liminf_{n \to \infty} n^{-1} \Exp [ N^\mathrm{ONG}_n (k) ] \right) \right) \leq \theta .\]
Since $\theta >1$ was arbitrary, \eqref{eq:upper_bound_for_mu} follows.

It remains to establish the claim \eqref{Hbound}. To this end, an application of \eqref{d1} shows that,
for constants $C_1, C_2 <\infty$ and $c>0$, for all $n \in \N$ and $1 \leq i \leq n$,
\[ \Exp [ \deg_n (i)^2 ] = \sum_{k=0}^\infty \Pr [ \deg_n (i) > \sqrt{k} ]
\leq C_1 \sqrt{ \frac{n}{i} }
\sum_{k=0}^\infty \re^{-c \sqrt{k}}
\leq C_2 \sqrt{ \frac{n}{i} } .\]
Another application of \eqref{d1} shows that, for some constant $C_3 < \infty$, for any $w >0$,
\[ \Pr [ \deg_n (i) > w d_n (i) ] \leq C_3 \sqrt{ \frac{n}{i} } \re^{-cw \log (n/i) } = C_3 \left( \frac{n}{i} \right)^{(1/2)-cw} .\]
Hence we obtain, for all $1 \leq i \leq n$,
\[
  \left( \Exp [   \deg_n (i)  ^2 ] \Pr [ \deg_n (i) > w d_n (i)  ] \right)^{1/2}  \leq C_4
\left( \frac{i}{n} \right)^{(cw- 1)/2 }   ,\]
where $C_4 < \infty$ is constant. 
Taking $w > 3/c$, we have, for any $i \in H^\theta_{n,k}$,
\[ C_4
\left( \frac{i}{n} \right)^{(cw- 1)/2 } 
\leq C_4 \re^{-\theta k} \re^{-(cw-3) \theta k /2 } ,\]
since $i/n \leq \re^{- \theta k}$ for $i \in H^\theta_{n,k}$.
In particular, for all $k \in \N$, we can choose $w$ (depending on $c$, $C_4$ and $\theta$)
such that $C_4 \re^{-(cw-3) \theta k /2 } \leq C_4 \re^{-(cw-3) \theta /2 } \leq 1$.
This verifies \eqref{Hbound}.
  \end{proof}

Next we have a concentration result for $N_n^\mathrm{ONG} (k)$.

\begin{lemma}
\label{ongconc}
Let $d \in \N$. Suppose that \eqref{fcon} holds. Then
\begin{equation}
\label{concentration}
 \limsup_{n \to \infty} n^{-1} \sup_{k \in \N} | N^\mathrm{ONG}_n(k) - \Exp [ N^\mathrm{ONG}_n (k) ] | =0 , \as \end{equation}
\end{lemma}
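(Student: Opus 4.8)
The plan is to control $N^{\mathrm{ONG}}_n(k)$ around its mean for each fixed $k$ via a variance estimate obtained by resampling sites, to upgrade this to an almost-sure statement along the whole sequence using the monotonicity of $n\mapsto N^{\mathrm{ONG}}_n(k)$, and finally to deal with the supremum over $k$ by a crude truncation. For the truncation, Lemma \ref{deglem} gives $N^{\mathrm{ONG}}_n(k)\le 2n/k$ a.s.\ and hence $\Exp[N^{\mathrm{ONG}}_n(k)]\le 2n/k$, so that for every $K\in\N$,
\[ n^{-1}\sup_{k\ge K}\bigl| N^{\mathrm{ONG}}_n(k)-\Exp[N^{\mathrm{ONG}}_n(k)]\bigr|\le \frac{2}{K},\qquad\text{a.s., for all }n. \]
It therefore suffices to prove that, for each fixed $k$, $n^{-1}\bigl(N^{\mathrm{ONG}}_n(k)-\Exp[N^{\mathrm{ONG}}_n(k)]\bigr)\to0$ a.s.; combining this (over the finitely many $k<K$) with the display above and letting $K\to\infty$ yields \eqref{concentration}.

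For the variance bound, fix $k$ and let $X_0',X_1',\ldots$ be an independent copy of $X_0,X_1,\ldots$; for $0\le i\le n$ let $\widehat N^i_n(k)$ be $N^{\mathrm{ONG}}_n(k)$ computed from the resampled configuration $\cX^i_n=(X_0,\ldots,X_{i-1},X'_i,X_{i+1},\ldots,X_n)$. The structural observation is that replacing $X_i$ by $X'_i$ alters the ONG only through edges incident to vertices whose on-line nearest neighbour equals $i$ in one of the two configurations: for $j>i$ the distances $\rho(X_j,X_\ell)$ with $\ell\neq i$ are unchanged, so $\eta_1(j)$ can differ only if $\eta_1(j)=i$ before or after the resampling. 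Consequently the edge sets of the two graphs differ in at most $c_0(\deg_n(i)+\deg^i_n(i))$ edges, where $\deg^i_n(i)$ is the degree of $i$ in the resampled graph and $c_0$ is an absolute constant, so that $|N^{\mathrm{ONG}}_n(k)-\widehat N^i_n(k)|\le c_0(\deg_n(i)+\deg^i_n(i))$. The Efron--Stein inequality then bounds $\mathrm{Var}(N^{\mathrm{ONG}}_n(k))$ by a constant times $\sum_{i=0}^n\Exp[\deg_n(i)^2]$ (the resampled degrees having the same law), and the estimate $\Exp[\deg_n(i)^2]\le C\sqrt{n/(i+1)}$, which follows from \eqref{d1} exactly as in the proof of Lemma \ref{expdeg}, gives $\mathrm{Var}(N^{\mathrm{ONG}}_n(k))\le C'n$ for a constant $C'$ not depending on $n$ or $k$.

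To pass from this to almost-sure control, put $n_j:=j^2$. Since $\sum_j n_j^{-2}\mathrm{Var}(N^{\mathrm{ONG}}_{n_j}(k))\le C'\sum_j j^{-2}<\infty$, Chebyshev's inequality and the Borel--Cantelli lemma give $n_j^{-1}\bigl(N^{\mathrm{ONG}}_{n_j}(k)-\Exp[N^{\mathrm{ONG}}_{n_j}(k)]\bigr)\to0$ a.s. Now $n\mapsto N^{\mathrm{ONG}}_n(k)$ is non-decreasing (adding a vertex creates exactly one new vertex and one new edge, so no existing degree decreases) and each step increases $N^{\mathrm{ONG}}_n(k)$ by at most $2$; hence $\Exp[N^{\mathrm{ONG}}_{n_{j+1}}(k)]-\Exp[N^{\mathrm{ONG}}_{n_j}(k)]\le2(n_{j+1}-n_j)=O(\sqrt{n_j})$. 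For $n_j\le n\le n_{j+1}$ one sandwiches both $N^{\mathrm{ONG}}_n(k)$ and $\Exp[N^{\mathrm{ONG}}_n(k)]$ between their values at $n_j$ and $n_{j+1}$; since $n_{j+1}/n_j\to1$ and $\sqrt{n_j}/n_j\to0$, this yields $n^{-1}\bigl(N^{\mathrm{ONG}}_n(k)-\Exp[N^{\mathrm{ONG}}_n(k)]\bigr)\to0$ a.s., as required.

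The main obstacle is precisely the variance step: $N^{\mathrm{ONG}}_n(k)$ is not a bounded-differences function of $(X_0,\ldots,X_n)$, since resampling a single site can in principle rewire edges at many vertices, so neither Azuma's inequality nor the standard bounded-differences inequality applies directly. The observation that only the on-line nearest-neighbours of the resampled vertex are affected — a set whose size is controlled in $L^2$ through \eqref{d1} — is what makes Efron--Stein applicable and drives the whole argument. (An alternative would be to invoke a form of the bounded-differences inequality that tolerates failure of the Lipschitz property on an event of small probability, combined with the $O(\log n)$ high-probability degree bound from \eqref{d1}; this would even give exponential concentration, but is not needed here.)
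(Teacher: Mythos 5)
Your proof is correct, but it takes a genuinely different route through the concentration step than the paper does. You share with the paper the key structural observation — resampling $X_i$ can only rewire the outgoing edge of $i$ and the edges of vertices whose on-line nearest neighbour is $i$ before or after the resampling, so the change in $N^{\mathrm{ONG}}_n(k)$ is bounded by a constant times $\deg_n(i)+\deg^i_n(i)$ — together with the tail bound \eqref{d1}. The paper, however, turns this into a martingale-difference bound $|D_{n,i}|\le \deg_n(i)+\deg^i_n(i)$ for the Doob martingale with respect to $\sigma(X_0,\ldots,X_i)$ and then applies the Chalker et al.\ modification of Azuma--Hoeffding (exactly the ``bounded differences except on a small event'' device you mention parenthetically, via the $O(\log n)$ high-probability degree bound), obtaining $\Pr[|N^{\mathrm{ONG}}_n(k)-\Exp N^{\mathrm{ONG}}_n(k)|>n^{3/4}]=O(n^{-3})$ uniformly in $k\le n$; a double Borel--Cantelli over $n$ and $k$ then gives \eqref{concentration} along the full sequence, with the supremum over $k$ handled for free (and $N^{\mathrm{ONG}}_n(k)=0$ for $k>n$). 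You instead use Efron--Stein to get $\mathrm{Var}(N^{\mathrm{ONG}}_n(k))\le C'n$ (the estimate $\Exp[\deg_n(i)^2]\le C\sqrt{n/(i+1)}$ from \eqref{d1} is indeed already in the proof of Lemma \ref{expdeg}), and compensate for the weaker Chebyshev tail by the subsequence $n_j=j^2$, the monotonicity of $n\mapsto N^{\mathrm{ONG}}_n(k)$ with increments at most $2$, and the truncation $\sup_{k\ge K}|N^{\mathrm{ONG}}_n(k)-\Exp N^{\mathrm{ONG}}_n(k)|\le 2n/K$ from Lemma \ref{deglem}. Your argument is more elementary (no refined concentration inequality needed) and perfectly adequate for the a.s.\ law-of-large-numbers statement; what it gives up is the quantitative, $k$-uniform deviation bound at each fixed $n$, which the paper's route provides and which would be needed if one wanted rates or did not have monotonicity in $n$ to interpolate between subsequence points.
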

\begin{proof}
We use a  concentration argument based on a modification of the Azuma--Hoeffding inequality, which uses
the resampling idea described before Lemma \ref{lem:zeta_conditional}. Recall that $\cX^i_n$ denotes 
$\cX_n$ but with the location of vertex $i$ independently resampled.
Let  
  $\cG_i = \sigma (X_0, X_1, \ldots, X_i)$; then $N^\mathrm{ONG}_n(k) = \psi_{n,k} (\cX_n)$ a.s.\ for some
measurable  $\psi_{n,k} : S^{n+1} \to \ZP$ and $N^\mathrm{ONG}_n(k)$ is $\cG_n$-measurable.
Fix $k \in \N$, and write
\[ D_{n,i} = \Exp [ N^\mathrm{ONG}_n(k) \mid \cG_{i} ] - \Exp [ N^\mathrm{ONG}_n(k) \mid \cG_{i-1} ] = \Exp [ N^\mathrm{ONG}_n (k) - N^\mathrm{ONG}_{n,i} (k) \mid \cG_i ] ,\]
where $N^\mathrm{ONG}_{n,i} (k) = \psi_{n,k} ( \cX^i_n )$.
In   words, $-D_{n,i}$ is the expected change in $N^\mathrm{ONG}_n (k)$ (conditional on $\cG_i$) on resampling the location of the $i$th point $X_i$.
Then $D_{n,i}$, $1 \leq i \leq n$ is a martingale difference sequence with
$  \sum_{i=1}^n D_{n,i} = N^\mathrm{ONG}_n (k) - \Exp [ N^\mathrm{ONG}_n (k) ]$.

We bound $| D_{n,i} |$ in terms of $\deg_n ( i)$ and $\deg_n^i (i)$,   the degree of vertex $i$ in the ONG
on $\cX_n$ and $\cX_n^i$ respectively.
On replacement of $X_i$ by $X_i'$,
the degree of vertex $i$ may change, leading to a change of $\pm 1$ in $N^\mathrm{ONG}_{n,i} (k)$ compared to $N^\mathrm{ONG}_n (k)$.
The degrees of at most $\deg_n ( i) -1$ other vertices increase
(namely those vertices that gain incoming edges that were previously connected to $X_i$),
while the degrees of at most $\deg_n^i (i) -1$ vertices decrease (namely those vertices
that lose incoming edges re-assigned to $X_i'$).
 
Hence
$| D_{n,i } | \leq   \deg_n (i) + \deg_n^i (i)$.
 Now, for any $r >0$,
\[ \Pr [ | D_{n,i } | > r ] \leq \Pr [ \deg_n (i) > r/2] + \Pr [ \deg_n^i (i) > r/2] = 2 \Pr [ \deg_n (i) > r/2 ] ,\]
since $\deg_n^i (i)$ and $\deg_n (i)$ are identically distributed. Hence, by \eqref{d1},
$\Pr [ | D_{n,i} | > D \log n ] = O ( n^{-5})$, 
uniformly in $i$, choosing $D \in (0,\infty)$ sufficiently large; note that this bound is also uniform in $k$.
By a modification of the Azuma--Hoeffding inequality due to Chalker {\em et al.} \cite[Lemma 1]{chalker}, it follows that
\[ \Pr [ | N^\mathrm{ONG}_n(k) - \Exp [ N^\mathrm{ONG}_n (k) ] | > r ]
\leq \left( 1 + \frac{4n}{r} \right) n^{-4} + 2 \exp \left \{ - \frac{r^2}{ 32 D^2 n (\log n)^2 } \right \} ,\]
for any $r >0$.  Taking $r = n^{3/4}$, say, shows that
$\Pr [ | N^\mathrm{ONG}_n(k) - \Exp [ N^\mathrm{ONG}_n (k) ] | > n^{3/4} ] = O ( n^{-3} )$, uniformly in $k \in \{1,\ldots,n\}$,
while for $k >n$, $N^\mathrm{ONG}_n(k) = 0$ a.s. Hence
\[ \sum_{n=1}^\infty \sum_{k \in \N} \Pr [ | N^\mathrm{ONG}_n(k) - \Exp [ N^\mathrm{ONG}_n (k) ] | > n^{3/4} ]  \leq C
\sum_{n=1}^\infty n^{-2} < \infty .\]
The Borel--Cantelli lemma now yields \eqref{concentration}.
\end{proof}

Now we can complete the proof of Theorem \ref{ongdeg}.

\begin{proof}[Proof of Theorem \ref{ongdeg}.]
First we show that,
for any $k \in \N$,
\eqref{deglim} holds.
Penrose \cite[\S 3.4]{mdp} showed that functionals such as counts of vertices of a given degree in the ONG
satisfy {\em stabilization} (a form of local dependence). Stabilization of the form demonstrated in \cite{mdp}
guarantees a law of large numbers of the form $n^{-1} N^\mathrm{ONG}_n (k) \to \Pr [ \xi ( 0, U ; \cH )  \geq k]$
as $n \to \infty$, with {\em convergence in probability}: concretely, one may apply results of Penrose and Yukich
\cite{py2} or Penrose \cite{plln}. The fact that $n^{-1} \Exp [ N^\mathrm{ONG}_n (k) ] \to \Pr [ \xi ( 0, U ; \cH )  \geq k]$
then follows from the bounded convergence theorem. Lemma \ref{ongconc} now shows that convergence in probability
can be replaced by almost sure convergence, and the $L^1$ convergence follows from the bounded convergence theorem again.
 Thus \eqref{deglim} holds.

Then, applying Lemma \ref{expdeg} with \eqref{deglim},   \eqref{degbounds} 
follows from
\eqref{eq:two_sided_bounds}. Given \eqref{deglim}, the upper bound in 
\eqref{eq:rho_lower_bound}
follows from \eqref{eq:upper_bound_for_mu}. Similarly, the 
lower bound in \eqref{eq:rho_lower_bound} follows from 
\eqref{eq:lower_bound_for_mu}, noting that the limit $\rho_k$ is independent of the choice of $f$.

It is easy to see that $\rho_k$ is nonincreasing with $\rho_1 =1$. Since $\sum_{k\in \N} N^\mathrm{ONG}_n (k) = 2 n$, twice the number of edges
in the ONG, dividing both sides of this last equality by $n$ and letting $n \to \infty$ we must have $\sum_{k \in \N} \rho_k = 2$; hence also
$\lim_{k \to \infty} \rho_k = 0$.
For the final statement of the theorem,
we have from \eqref{d1} that for any $k >0$,
\[ \Pr \Big[  \max_{0 \leq i \leq n} \deg_n (i)  > k \Big] \leq  (n+1) \max_{0 \leq i \leq n} \Pr [    \deg_n (i)  > k ] \leq C n^{3/2} \re^{-c k} ,\]
for some absolute constants $c, C \in (0,\infty)$. Taking $k = D \log n$, 
we can choose $D \in (0,\infty)$ for which this last bound is $O( n^{-2} )$, say;
the Borel--Cantelli lemma then gives \eqref{ongmax}.
\end{proof}

\end{document}